\documentclass[12pt,a4paper,reqno]{amsart}

\usepackage{graphicx}
\usepackage{overpic}
\usepackage{color}
\usepackage[T1]{fontenc}

\usepackage[margin=2.9cm]{geometry}

\usepackage[utf8]{inputenc}
\usepackage{amsmath}
\usepackage{amsfonts}
\usepackage{enumerate}
\usepackage{array}

\newcommand{\R}{\ensuremath{\mathbb{R}}}

\newcommand{\CF}{\ensuremath{\mathcal{F}}}
\newcommand{\CC}{\ensuremath{\mathcal{C}}}
\newcommand{\CO}{\ensuremath{\mathcal{O}}}
\newcommand{\A}{\ensuremath{\mathcal{A}}}

\newcommand{\al}{\ensuremath{\mathcal{\alpha}}}

\newcommand{\T}{\theta}
\newcommand{\e}{\varepsilon}
\newcommand{\p}{\partial}
\newcommand{\ov}{\overline}

\DeclareMathOperator{\Span}{Span}

\newtheorem{thm}{Theorem}[section]

\newtheorem{cor}[thm]{Corollary}
\newtheorem{lem}[thm]{Lemma}
\newtheorem{prop}[thm]{Proposition}

\newtheorem{rem}[thm]{Remark}

\newtheorem{example}[thm]{Example}

\newcolumntype{L}{>{\displaystyle}l}
\newcolumntype{C}{>{\displaystyle}c}
\newcolumntype{R}{>{\displaystyle}r}

\allowdisplaybreaks

\title[Hilbert number in piecewise quadratic systems]{New lower bound for the Hilbert number in piecewise quadratic differential systems}

\author[L. P. C. da Cruz] {Leonardo P. C. da Cruz}
\address{Departament de Matem\`{a}tiques, Universitat Aut\`{o}noma de Barcelona, 08193 Bellaterra, Barcelona,
	Catalonia, Spain}
\email{leonardo@mat.uab.cat}

\author[D.~Novaes]{Douglas D. Novaes}
\address{Departamento de Matem\'{a}tica, Universidade Estadual de Campinas,
 Rua S\'{e}rgio Buarque de Holanda, 651, Cidade Universit\'{a}ria Zeferino Vaz,
 13083--859, Campinas, SP, Brazil} \email{ddnovaes@ime.unicamp.br}

\author[J.~Torregrosa]{Joan Torregrosa}
\address{Departament de Matem\`{a}tiques,
Universitat Aut\`{o}noma de Barcelona, 08193 Bellaterra, Barcelona,
Spain} \email{torre@mat.uab.cat}

\subjclass[2010]{Primary: 37G15, 37C27. Secundary: 37G10, 34C07}

\keywords{Non-smooth differential system, limit cycles in piecewise quadratic differential systems, first and second order perturbations of isochronous quadratic systems, Hilbert number for piecewise quadratic differential systems}

\begin{document}

\begin{abstract} We study the number of limit cycles bifurcating from a piecewise quadratic system. All the differential systems considered are piecewise in two zones separated by a straight line. We prove the existence of 16 crossing limit cycles in this class of systems. If we denote by $H_p(n)$ the extension of the Hilbert number to degree $n$ piecewise polynomial differential systems, then $H_p(2)\geq 16.$ As fas as we are concerned, this is the best lower bound for the quadratic class. Moreover, all the limit cycles appear in one nest bifurcating from the period annulus of some isochronous quadratic centers.
\end{abstract}

\maketitle

\section{Introduction} \label{se:1}
Consider the class of polynomial differential systems of degree $n.$ The maximum number of isolated periodic orbits, the so-called \emph{limit cycles}, that a polynomial differential system of degree $n$ can have is called \emph{Hilbert number}, $H(n).$ It is well known that linear systems have no limit cycles, then $H(1)=0.$ For $n=2,$ the problem of estimating $H(2)$ has been studied intensively during the last century. Lower bounds for $H(2)$ can be given by providing concrete examples of polynomial differential systems of degree $2.$ Up to now, the best result was given by Shi in \cite{Songling1980}, where he proved the existence of a quadratic system with 4 limit cycles in configuration $(3,1),$ that is $H(2)\geq4.$ We call by $M(n)$ the maximum number of limit cycles bifurcating from a singular point as a degenerate Hopf bifurcation. Clearly, $M(n)$ is a lower bound for $H(n).$ Bautin showed in \cite{Bautin1954} that $M(2)=3;$ in \cite{Zol1995,Zol2016}, \.Zo\l{}\k{a}dek proved that $M(3)\geq11;$ a simpler proof was provided by Christopher in \cite{Chr2006}. For $n=3,$ Li, Liu, and Yang proved in \cite{LiLiuYang2009} that $H(3)\geq13.$ 

In the last few years there has been an increasing interest in piecewise smooth systems. This interest has been mainly motivated by their wider range of application in modeling real phenomena (see, for instance, \cite{AcaBonBro2011,BerBudChaKow2008}). In this paper we shall deal with the following class of piecewise vector fields
\begin{equation}\label{eq:0}
Z(x,y)= \left\{
\begin{array}{ccc}
Z^+(x,y),\quad h(x,y)>0,\\
Z^-(x,y),\quad h(x,y)<0,\\
\end{array}\right.
\end{equation}
where $Z^{\pm}=(X^{\pm},Y^{\pm})$ are smooth vector fields and $h:\R^2 \rightarrow \R$ is a $\mathcal C^1$ function for which $0$ is a regular value. In the above vector field, the discontinuity curve and the regions where $Z^\pm$ are denoted by $\Sigma=h^{-1}(0)$ and $\Sigma^\pm=\{(x,y)\in \R^2:\pm h(x,y)>0\},$ respectively. The local trajectories of $Z$ on $\Sigma$ was stated by Filippov in \cite{Fil1988} (see Figure~\ref{fi:filipov}). The points on $\Sigma$ where both vectors fields simultaneously point outward or inward from $\Sigma$ define the \emph{escaping} ($\Sigma^e$) and \emph{sliding region} ($\Sigma^s$), respectively. The interior of its complement on $\Sigma$ defines the \emph{crossing region} ($\Sigma^c$), and the boundary of these regions is constituted by tangential points of $Z^{\pm}$ with $\Sigma.$
\begin{figure}[h]
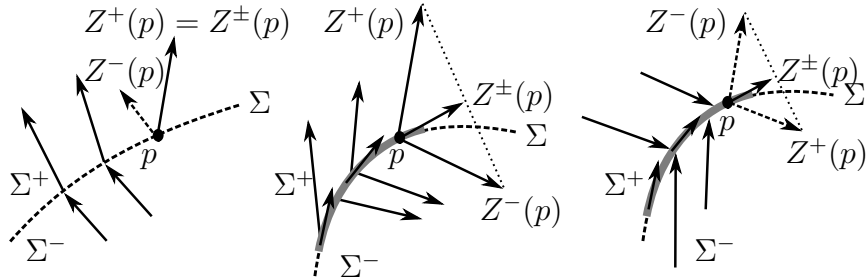

\begin{center}
\begin{overpic}{fig1}
\put(29,20){$\Sigma$}
\put(62.5,16){$\Sigma$}
\put(101,21){$\Sigma$}
\put(0,10){$\Sigma^+$}
\put(32,10){$\Sigma^+$}
\put(72,10){$\Sigma^+$}
\put(2,2){$\Sigma^-$}
\put(40,0){$\Sigma^-$}
\put(83,2){$\Sigma^-$}
\put(16,14){$p$}
\put(46,14){$p$}
\put(86,18){$p$}
\put(9,30){$Z^+(p)=Z^{\pm}(p)$}
\put(38,30){$Z^+(p)$}
\put(94,14){$Z^+(p)$}
\put(9,24){$Z^-(p)$}
\put(57,7){$Z^-(p)$}
\put(77,30){$Z^-(p)$}
\put(56,21){$Z^{\pm}(p)$}
\put(93,24){$Z^{\pm}(p)$}
\end{overpic}
\end{center}
\caption{Definition of the vector field on $\Sigma$ following Filippov's convention in the sewing, escaping, and sliding regions.}\label{fi:filipov}
\end{figure}
Let $Z^{\pm}h$ denote the derivative of the function $h$ in the direction of the vector $Z^{\pm}$ that is, $Z^{\pm}h(p)=\langle \nabla h(p), Z^{\pm}(p)\rangle.$ Notice that $p\in\Sigma^c$ provided that $Z^+h(p)\cdot Z^-h(p) > 0,$ $p\in\Sigma^e\cup\Sigma^s$ provided that $Z^+h(p)\cdot Z^-h(p) < 0,$ and $p$ in $\Sigma$ is a tangential point of $Z^{\pm}$ provided that $Z^+h(p)Z^-h(p)=0.$ We say that $p \in \Sigma$ is a \emph{singularity} of $Z,$ if $p$ is either a tangential point or a singularity of $Z^+$ or $Z^-.$ We call $p\in \Sigma$ an \emph{invisible fold} of 
$Z^+$ (resp. $Z^-$) if $p$ is a tangential point of $Z^+$ (resp. $Z^-$) and $(Z^+)^2h(p)<0$ (resp. $(Z^-)^2h(p)>0$). 

Analogously to the smooth case, we denote by $H^c_p(n)$ the maximum number of crossing limit cycles that piecewise polynomial differential systems of degree $n$ admit when the curve of discontinuity is a straight line. We also denote by $M^c_p(n)$ the maximum number of crossing limit cycles bifurcating from a singular point or sliding set. Up to now, for piecewise linear systems in two zones separated by a straight line, there are no examples with more than 3 limit cycles. An example with 3 limit cycles was firstly detected numerically in \cite{HuanYang2010} by Huan and Yang. Later, it was analytically proved by Llibre and Ponce in \cite{LliPon2012}. The existence of 3 limit cycles was also obtained from perturbations of a center. For instance, Buzzi et al. in \cite{BuzPesTor2013} obtained 3 limit cycles after a seventh order piecewise linear perturbation of a linear center, and Llibre et al. in \cite{LliNovTei2015a} obtained the same result through a first order perturbation of a piecewise linear center. We may also quote Freire et al. \cite{TorrePonceFre2014}. Consequently, for piecewise linear systems in two zones separated by a straight line we have $H_p^c(1)\geq3.$ 

The averaging theory of order five for studying piecewise perturbations of the linear center was used by Llibre and Tang in \cite{LliTan2016} who provided that $H_p^c(2)\geq8$ and $H^c_p(3)\geq M_p^c(3)\geq 13.$  Recently in \cite{GuoYuChe2018} this number has been improved for piecewise cubic systems providing  $H^c_p(3)\geq 18.$ These are the best results so far for piecewise quadratic and cubic systems in two zones separated by a straight line. Previously, using the averaging theory of first order for studying piecewise perturbations of some quadratic isochronous systems, Llibre and Mereu in \cite{MereuLlibre2014} obtained only 5 limit cycles. Recently, in \cite{CenLiuYanZha2018} the authors study this perturbation problem, only up to first order but for degree $n$. It is worthwhile to say that for quadratic polynomial systems Chicone and Jacobs in \cite{ChiJac1991} proved that at most 2 limit cycles can bifurcate from any period annulus.

In this paper we shall use the averaging theory of first and second order to provide better lower bounds for the maximum number of limit cycles that piecewise quadratic systems can have. More specifically, we shall give examples satisfying $M_p^c(2)\geq 16.$ Consequently, $H^c_p(2)\geq M^c_p(2)\geq16.$ Table~\ref{ta:Hilbert} summarizes the results about the Hilbert numbers for lower degree vector fields. 

\begin{thm}\label{thm:Main} 
There exists a piecewise planar quadratic differential system in two zones separated by a straight line with $16$ crossing limit cycles. 
\end{thm}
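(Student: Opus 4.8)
The plan is to produce the $16$ limit cycles as simple zeros of a \emph{second-order} averaged function attached to a piecewise quadratic perturbation of a well-chosen isochronous quadratic center. First I would place the discontinuity line at $\Sigma=\{x=0\}$ through the center and take as unperturbed field one of the isochronous quadratic centers in Loud's classification, whose period annulus can be linearized so that, in a suitable angular variable, the unperturbed flow satisfies $\dot\T=1$ (constant return time). Writing the two halves as $Z^{\pm}=Z_0+\e Z_1^{\pm}+\e^2 Z_2^{\pm}$, where each $Z_i^{\pm}=(X_i^{\pm},Y_i^{\pm})$ carries the twelve quadratic coefficients of a planar quadratic field, the crossing periodic orbits correspond to the zeros of the displacement map, which averaging theory expands as $\e\,f_1(r)+\e^2 f_2(r)+\CO(\e^3)$ on the interval parametrizing the annulus.

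Next I would compute $f_1(r)$ by integrating the first-order contribution over the two half-periods separately, $f_1(r)=\int_0^{\pi}F_1^{+}(\T,r)\,d\T+\int_{\pi}^{2\pi}F_1^{-}(\T,r)\,d\T$, obtaining a finite linear combination of explicit generating functions of $r$ (trigonometric integrals of the center solution, typically powers of $r$ together with terms in $\sqrt{1-r^2}$ and inverse-trigonometric functions), with coefficients linear in the $Z_1^{\pm}$ parameters. Since the Chicone--Jacobs obstruction caps what first order can achieve in the quadratic class, I would impose linear conditions on the parameters forcing $f_1\equiv 0$, thereby promoting $f_2(r)$ to the leading term. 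The function $f_2$ is then assembled from the primitive of the first-order integrand, the derivative of $Z_1^{\pm}$ along the unperturbed flow, and the genuinely second-order part $Z_2^{\pm}$, again split as two half-period integrals and tracked with the parity induced by $\Sigma$.

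The crux is to show that $f_2(r)$ can be written as $\sum_{j=1}^{N}\la_j\,g_j(r)$ with $N=17$ explicit generating functions $g_j$, whose coefficients $\la_j$ are \emph{independent} linear forms in the remaining first- and second-order parameters, and that the ordered family $\{g_1,\dots,g_{17}\}$ is an Extended Complete Chebyshev (ECT) system on the annulus interval. Establishing the ECT property — most naturally via a Wronskian computation after reducing all $g_j$ to a common algebraic form — is the main obstacle, because the functions produced by second-order piecewise averaging are substantially more intricate than in the smooth case, and the interplay between the even part (from $Z^{+}\!+\!Z^{-}$) and the odd part (from $Z^{+}\!-\!Z^{-}$) across $\Sigma$ must be disentangled so that no collapse of the span occurs. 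Once the ECT property holds, the standard theory guarantees that the parameters can be tuned so that $f_2$ has exactly $16$ simple positive zeros in the interior of the annulus.

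Finally I would upgrade each simple zero to a genuine Filippov crossing limit cycle: transversality of the zero gives hyperbolicity and persistence of the corresponding closed orbit for all small $\e\neq 0$, while a direct inspection shows that this orbit meets $\Sigma$ only in the crossing region $\Sigma^c$, avoiding the sliding and escaping sets and the fold tangencies. Assembling these $16$ hyperbolic crossing limit cycles into a single nest surrounding the center then yields the desired example and completes the proof.
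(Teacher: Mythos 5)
Your plan contains a genuine gap at its crux, and it is exactly the obstruction the paper had to work around. After imposing $\mathcal{F}_1\equiv 0$, the second-order averaged function splits as $\mathcal{F}_2=\mathcal{F}_2^{(1)}+\mathcal{F}_2^{(2)}$, where $\mathcal{F}_2^{(1)}$ is linear in the second-order parameters but spans \emph{exactly the same function space} as $\mathcal{F}_1$ (it is $\mathcal{F}_1$ with the index of the parameters shifted), so it contributes no new independent generating functions; and $\mathcal{F}_2^{(2)}$, coming from $\int \partial_r F_1\cdot r_1$, has coefficients that depend \emph{quadratically} on the first-order parameters. Hence your assumption that $f_2=\sum_{j=1}^{17}\lambda_j g_j$ with seventeen coefficients that are independent \emph{linear} forms in the parameters is false, and the ECT/Wronskian machinery cannot be invoked: the realizability of any zero configuration for an ECT system requires free linear coefficients, while here the attainable coefficient vectors form a quadratic (not linear) variety. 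The paper states this explicitly (Remark~\ref{re:dificilo2}) and therefore abandons the ECT route at second order. Instead it performs a purely local analysis at $r=0$: it uses the linear (second-order) parameters to prescribe eight Taylor coefficients, and then proves that the remaining quadratic system $\{f_7=f_9=f_{11}=f_{12}=f_{13}=f_{14}=f_{15}=0\}$ with $f_{16}\neq 0$ admits a \emph{transversal} solution (a curve of weak foci of order 16, in the spirit of Christopher and Chicone--Jacobs), the transversality being established rigorously by the Poincar\'e--Miranda theorem combined with rational interval bounds for $\pi$ and $\sqrt{3}$ (Lemmas~\ref{le:sigmachi} and \ref{le:algoritmo}).

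Two further points would make your construction fall short of 16 even if the algebra were repaired. First, the choice $\Sigma=\{x=0\}$ is the wrong line: the paper's first-order analysis of $S_2$ (Proposition~\ref{pr:S2O1}) shows the cyclicity depends strongly on the line, with $\{x=0\}$ giving the worst count (4) and $\{y+\sqrt3 x=0\}$ the best (7); the 16 cycles are obtained only for $S_2$ with $\Sigma=\{y+\sqrt{3}x=0\}$, while the $\{x=0\}$ configurations top out at 11 (for $S_1$) and 10 (for $S_3$). Moreover, for that optimal line the integrals in $\mathcal{F}_2$ cannot be evaluated in closed elementary form (they involve the non-elementary functions $\phi$ and $\Phi_0^0$ of Section~\ref{se:5}), so a global Wronskian analysis on the annulus is not even available — only Taylor expansion at the origin is. Second, under the normalization $P^{\pm}(0,0)=Q^{\pm}(0,0)=0$ needed for the averaged functions to be well defined at the origin, one has $\mathcal{F}_2(0)=0$, so second-order averaging can produce at most 15 simple zeros near the origin; the sixteenth limit cycle in the paper is not a zero of $\mathcal{F}_2$ at all, but arises from a pseudo-Hopf bifurcation (Proposition~\ref{pr:pseudoHopf} and Lemma~\ref{le:hopfzie}) obtained by switching the constant terms back on. Your proposal omits this mechanism entirely, so even a successful ECT argument of the type you describe would need 17 independent functions that the parameter space simply does not provide.
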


\begin{table}[h]
\begin{center}
\begin{tabular}{|c|c|c|}
	\hline
 $\deg$ & PVF & PPVF\\
 	\hline
$n=1$ &	$H(1)=0$ & $H^c_p(1)\geq 3$\\
	\hline
$n=2$ & $H(2) \geq {4}$ & $H^c_p(2)\geq16$ \\
	\hline
$n=3$ & $H(3)\geq 13$ & $H^c_p(3)\geq 18$\\ 
	\hline
\end{tabular}\vspace{5mm}
\end{center}
\caption{Summary of Hilbert numbers for polynomial and piecewise polynomial systems of degree $n.$}\label{ta:Hilbert}
\end{table}

In order to prove our main result we shall proceed with a first and second order perturbation analysis of quadratic isochronous centers. 
In \cite{Lou64} the quadratic isochronous centers are classified in four families, namely $S_1,$ $S_2,$ $S_3,$ and $S_4.$ In \cite{MarRouTon1995} their isochronicity properties were proved as well as their linearizations. In this paper we consider the first three classes of centers, which are birational equivalent to the linear one. This property does not hold for $S_4.$ It is proved in \cite{ChaSab1999} that any center of families $S_1,$ $S_2,$ and $S_3$ can be transformed, after a birational change of variables, in one of the following centers:
\begin{equation}\label{eq:10}
S_1:\begin{array}{l}
\left\{\begin{array}{l}
\dot x= -y+x^2-y^2,\\[6pt]
\dot y=x+2xy.
\end{array}
\right.
\,\,
S_2:\left\{\begin{array}{l}
\dot x=-y+x^2,\\[6pt]
\dot y=x+xy.
\end{array}
\right.
\end{array}
\,\,
S_3:\left\{
\begin{array}{l}
\dot{x}=-y- \dfrac{4}{3} \,x^2,\\[8pt]
\dot{y}=x-\dfrac{16}{3}\,xy.
\end{array}
\right.
\end{equation}
Here, the dot denotes the derivative with respect to time. The phase portraits of these systems are depicted in Figure~\ref{fi:S1S2S3}.
\begin{figure}[h]
	\begin{center}
		\includegraphics[height=3cm]{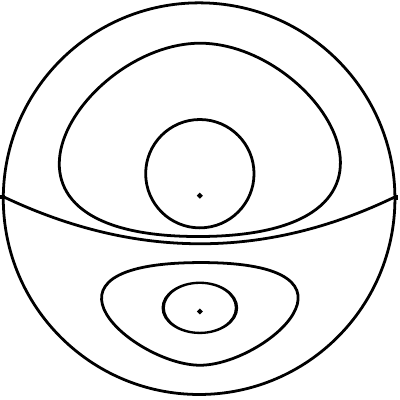} \quad
		\includegraphics[height=3cm]{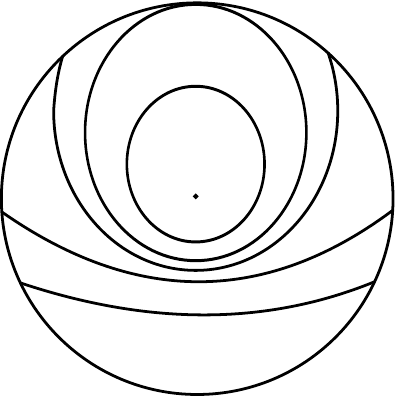} \quad
		\includegraphics[height=3cm]{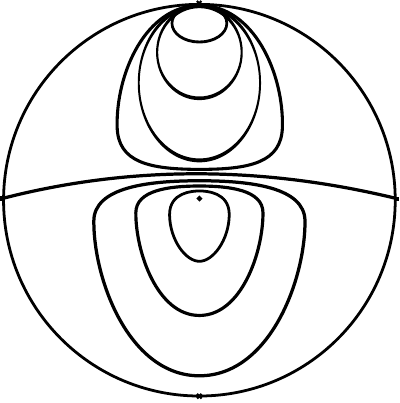} \quad
		\caption{Phase portrait of systems $S_1,$ $S_2,$ and $S_3$ from left to right.} \label{fi:S1S2S3}
	\end{center}
\end{figure}

The first order averaging method was used in \cite{MereuLlibre2014} to get 4 and 5 limit cycles by perturbing, respectively, the centers $S_1$ and $S_2$ inside the class of piecewise quadratic systems with two zones separated by the straight line $y=0.$ Here, due to restrictions of the employed technique, we take $\{x=0\}$ as the curve of discontinuity for the centers $S_1$ and $S_3.$ The first and second order analysis for $S_1$ are performed in Propositions~\ref{pr:S1O1} and \ref{pr:S1O2}, where we get 5 and 11 limit cycles, respectively. Analogously, for $S_3$ Propositions~\ref{pr:S3O1} and \ref{pr:S3O2} provide 5 and 10 limit cycles, respectively. We shall see that for the center $S_2$ the employed technique works whenever the curve of discontinuity is a straight line passing through the origin. This allows to reach the best result, namely $H^c_p(2)\geq16.$ In fact, proceeding with a first order analysis Proposition~\ref{pr:S2O1} provides $5,$ $6,$ and $8$ limit cycles when the curve of discontinuity is $\{x=0\},$ $\{y=0\},$ and $\{y+\sqrt{3}x=0\},$ respectively. Due to the difficulties in the massive computations, the second order analysis has been performed only for the case of highest cyclicity at the first order analysis, namely when the curve of discontinuity is given by $\{y+\sqrt{3}x=0\}.$ In this case, Proposition~\ref{pr:S2O2} provides $16$ limit cycles bifurcating from the origin, that is $M^c_p(2)\geq16.$ This proves our main result, Theorem~\ref{thm:Main}.

This work is structured as follows. In Section~\ref{se:2}, we present some basic notions and preliminary tools. In Sections~\ref{se:3} and \ref{se:4}, the first and second order analysis are performed, respectively. Finally, last section contains the integrals used in this paper.

\section{Preliminaries} \label{se:2}

This section is devoted to present some basic notions and preliminary tools needed to prove our main result. Firstly, we introduce some results on averaging theory of first and second orders. In fact, the limit cycles will appear from the simple zeros of some integrals (see, for instance, \cite{ILN,LlbAnaNovaes2015}). Secondly, we recall the concepts of Extended Complete Chebyshev system (ECT-system) and Chebyshev system with accuracy (see, for instance, \cite{NovTor2017}). Then, we introduce the concept of pseudo-Hopf bifurcation, which is the birth of a limit cycle when the sliding set changes stability (see, for instance, \cite{Fil1988,PonFreTor2012}). Finally, we state the Poincar\'e--Miranda theory, which is an extension of the intermediate value theorem, see \cite{Kul1997}.

\subsection{Averaging Theory}
Assume that the origin is a center equilibrium point for system \eqref{eq:0}. Consider the following perturbed piecewise polynomial vector field
\begin{equation}\label{eq:8}
 Z_\e^{\pm}=\left\{
\begin{array}{l}
 \!\!Z^+(x,y) + \e \,(P_1^+(x,y),Q_1^+(x,y))+ \e^2 \,(P_2^+(x,y),Q_2^+(x,y)), \textrm{ if } h(x,y)>0,\\
 \!\!Z^-(x,y) + \e \,(P_1^-(x,y),Q_1^-(x,y))+ \e^2 \,(P_2^-(x,y),Q_2^-(x,y)), \textrm{ if } h(x,y)<0,
\end{array}
\right. 
\end{equation}
where $\e$ is sufficiently small, $P_k^{\pm},$ $Q_k^{\pm}$ are polynomials of degree $n$ in $(x,y),$ for $k=1,2,$ and $h(x,y)=y-\tan(\al) x.$ After changing to polar coordinates, $(x,y)=(r\cos\T,r\sin\T),$ system \eqref{eq:8} writes
\begin{equation}\label{eq:9}
\widetilde Z_{\e}(\T,r)=\left\{
\begin{array}{ccl}
\widetilde Z^+_\e(\T,r),\quad \textrm{if}\quad \alpha<\T<\alpha+\pi,\\
\widetilde Z^-_\e(\T,r), \quad \textrm{if}\quad \alpha-\pi<\T<\alpha.
\end{array}
\right. 
\end{equation}
Taking $\T$ as the new independent variable, the differential system associated to the vector field \eqref{eq:9} becomes the piecewise differential equation
\begin{equation}\label{eq:drdt}
r'(\T)=\frac{dr}{d\theta}=\e F_1(\T,r)+\e^2 F_2(\T,r)+\CO(\e^3),
\end{equation}
with
\[
F_i(\T,r)=\left\{
\begin{array}{ccc}
F_i^+(\T,r)&\textrm{if}&\alpha<\T<\alpha+\pi,\\
F_i^-(\T,r)&\textrm{if}&\alpha-\pi<\T<\alpha,
\end{array}\right.
\]
where $F_i^{\pm}:[\alpha-\pi,\alpha+\pi]\times (0,\rho^*)\rightarrow\R$ are analytical functions $2\pi$--periodic in the variable $\T$ for $i=1,2.$
\smallskip

We define $\mathcal{F}_1,\mathcal{F}_2:(0,\rho^*)\rightarrow\R$ as
\begin{equation}\label{eq:F1F2}
\begin{array}{RL}
\mathcal{F}_1(r)=&\int_{\alpha}^{\alpha+\pi}\left(F_1^+(\T,r)+F_1^-(\T-\pi,r)\right)d\T,\\[10pt]
\mathcal{F}_2(r)=&\int_{\alpha}^{\alpha+\pi}\left(F_2^+(\T,r)+F_2^-(\T-\pi,r)\right)d\T\\
&+\int_{\alpha}^{\alpha+\pi}\left(\dfrac{\p}{\p r}F_1^+(\T,r)r_1^+(\T,r)+\dfrac{\p}{\p r}F_1^-(\T-\pi,r)r_1^-(\T-\pi,r)\right)d\T.
\end{array}
\end{equation}
Here, the functions $r_1^{\pm}:(-\pi,\pi)\times \R^+\rightarrow \R$ are defined as
\begin{equation}\label{r1}
r_1^{\pm}(\T,r)=\int_{\alpha}^{\alpha+\T}F_1^{\pm}(\phi,r)d\phi.
\end{equation}
\begin{thm}[\cite{LlbAnaNovaes2015}]\label{thm:averagingF1F2} Consider the piecewise differential equation \eqref{eq:drdt}.
\begin{enumerate}[(i)]
\item Suppose that for $\rho\in (0,\rho^*)$ with $\mathcal{F}_1(\rho)=0$ and $\mathcal{F}_1'(\rho)\neq 0.$ Then, for $|\e|>0$ sufficiently small, there exists a $2\pi$--periodic
solution $r(\T,\e)$ of \eqref{eq:drdt} such that $r(0,\e)\to \rho$ when $\e\to 0.$
\item Assume that $\mathcal{F}_1=0.$ Suppose that for $\rho\in (0,\rho^*)$ with $\mathcal{F}_2(\rho)=0$ and $\mathcal{F}_2'(\rho)\neq 0.$ Then, for $|\e|>0$ sufficiently small, there exists a $2\pi$--periodic solution $r(\T,\e)$ of \eqref{eq:drdt} such that $r(0,\e)\to \rho$ when $\e\to 0.$
\end{enumerate}
\end{thm}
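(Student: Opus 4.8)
The plan is to realize the $2\pi$-periodic solutions of \eqref{eq:drdt} as the simple zeros of a scalar displacement map on the discontinuity line $\Sigma$, and then to extract that map's leading $\e$-behaviour from a regular perturbation expansion of two half-return maps. Since $h=0$ corresponds in polar coordinates to the two rays $\T=\alpha$ and $\T=\alpha+\pi$, a crossing periodic solution meets $\Sigma$ once on each ray. Fixing the crossing radius $z$ on the ray $\T=\alpha$, I would define $R^+(z,\e)$ as the radius, at $\T=\alpha+\pi$, of the solution of $r'=\e F_1^++\e^2 F_2^++\CO(\e^3)$ issuing from $(\alpha,z)$, and $R^-(z,\e)$ as the radius, at $\T=\alpha-\pi\equiv\alpha+\pi$, of the solution of the lower equation issuing from $(\alpha,z)$ with $\T$ decreasing. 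At $\e=0$ the equation reads $r'=0$, so the unperturbed orbits are the circles $r\equiv z$, which cross both rays transversally; hence for $|\e|$ small both half-return maps are well defined and depend smoothly on $(z,\e)$ on $(0,\rho^*)$, and a crossing $2\pi$-periodic solution through $(\alpha,z)$ exists precisely when the displacement $\delta(z,\e)=R^+(z,\e)-R^-(z,\e)$ vanishes.

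Next I would compute the expansion $R^{\pm}(z,\e)=z+\e\,u_1^{\pm}(z)+\e^2 u_2^{\pm}(z)+\CO(\e^3)$ by inserting this ansatz into the scalar equation and solving order by order (variation of parameters). This gives $u_1^{\pm}$ as the integral of $F_1^{\pm}$ over the appropriate half-range and $u_2^{\pm}$ as the integral of $F_2^{\pm}+\p_r F_1^{\pm}\,r_1^{\pm}$, where $r_1^{\pm}$ is the first-order in-zone correction defined in \eqref{r1}. The decisive structural point is that $\delta$ is the \emph{difference} of two half-flows emanating from the \emph{same} base point $(\alpha,z)$, so no composition cross-term appears; the substitution $\phi=\T-\pi$ together with the $2\pi$-periodicity of $F_i^-$ rewrites every lower-zone contribution as an integral over $[\alpha,\alpha+\pi]$ with its argument shifted by $-\pi$. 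Collecting terms then yields $\delta(z,\e)=\e\,\mathcal{F}_1(z)+\e^2\,\mathcal{F}_2(z)+\CO(\e^3)$ with $\mathcal{F}_1,\mathcal{F}_2$ exactly as in \eqref{eq:F1F2}.

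The conclusion follows from the implicit function theorem. For part (i), where $\delta=\e\,\mathcal{F}_1(z)+\CO(\e^2)$, I would factor out $\e$ and set $g(z,\e)=\delta(z,\e)/\e$ for $\e\neq0$, extended by $g(z,0)=\mathcal{F}_1(z)$; smoothness of $\delta$ in $(z,\e)$ makes $g$ smooth near $(\rho,0)$, and $g(\rho,0)=\mathcal{F}_1(\rho)=0$ together with $\p_z g(\rho,0)=\mathcal{F}_1'(\rho)\neq0$ produces a unique branch $z=z(\e)$ with $z(0)=\rho$ and $g(z(\e),\e)=0$, hence $\delta(z(\e),\e)=0$. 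This $z(\e)$ is the crossing radius of the sought periodic solution $r(\T,\e)$, and $r(0,\e)\to\rho$ as $\e\to0$. For part (ii), the hypothesis $\mathcal{F}_1\equiv0$ removes the linear term, so $\delta=\e^2\,\mathcal{F}_2(z)+\CO(\e^3)$; I would instead set $g(z,\e)=\delta(z,\e)/\e^2$, extended by $g(z,0)=\mathcal{F}_2(z)$, and run the identical argument at $\rho$ using $\mathcal{F}_2(\rho)=0$ and $\mathcal{F}_2'(\rho)\neq0$.

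The main obstacle is the second-order bookkeeping of the second paragraph: one must carry the first-order in-zone correction $r_1^{\pm}$ through the quadratic term, perform the $\phi\mapsto\T-\pi$ reindexing consistently, and verify that the pieces assemble into exactly \eqref{eq:F1F2}. Here the symmetric-difference formulation is what keeps the computation clean—had I instead composed the two half-maps sequentially, the derivative of the outer map would introduce a spurious cross-term $\p_z R^+\cdot(\text{lower correction})$ that one would then have to absorb using $\mathcal{F}_1\equiv0$. A secondary technical point, needed to justify both the order-by-order solution and differentiation under the integral sign, is the smooth dependence of the half-return maps on $(z,\e)$, uniformly on compact subsets of $(0,\rho^*)$; this follows from transversality of the unperturbed circles to the two rays and the standard smooth dependence of solutions on initial data and parameters.
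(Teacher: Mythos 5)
Your argument is correct, and since this theorem is stated in the paper without proof --- it is quoted from \cite{LlbAnaNovaes2015} --- the fair comparison is with the proof in that reference, of which yours is a faithful reconstruction: the same displacement function built as the forward--backward difference of the two half-flows issuing from a single point on the ray $\T=\alpha$, expanded order by order in $\e$ to produce exactly \eqref{eq:F1F2} (and your remark that this anchoring avoids the composition cross-term, which would otherwise have to be absorbed using $\mathcal{F}_1\equiv 0$, is accurate). The only deviation is at the final step, where \cite{LlbAnaNovaes2015} concludes via Brouwer degree so as to cover topologically nondegenerate zeros, while your division $g=\delta/\e$ (resp. $\delta/\e^2$, legitimate because $\mathcal{F}_1\equiv0$ kills the linear term identically) followed by the implicit function theorem is the specialization of that argument to the simple-zero hypotheses $\mathcal{F}_i(\rho)=0$, $\mathcal{F}_i'(\rho)\neq0$ stated here, and in fact yields a locally unique branch $z(\e)\to\rho$.
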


\subsection{ECT-Systems}
Let $\CF=[u_0,\ldots, u_n]$ be an ordered set of functions of class $\CC^{\infty}$ on the closed interval $[a,b].$ We denote by $\mathcal Z(\CF)$ the maximum number of zeros counting multiplicity that any nontrivial function $v\in\Span(\CF)$ can have. Here, $\Span(\CF)$ is the set of functions generated by linear combinations of elements of $\CF,$ that is $v(s)=a_0 u_0(s)+a_1 u_1(s)+\cdots+a_n u_n(s)$ where $a_i,$ for $i=0,1,\ldots,n,$ are real numbers.

The theory of Chebyshev systems is a classical tool to study the quantity $\mathcal Z(\CF).$ In fact, when $\mathcal Z(\CF)\leq n,$ $\CF$ is called an {\it Extended Chebyshev system} or ET-system on $[a,b],$ see \cite{KarStu1966}. We say that $\CF$ is an {\it Extended Complete Chebyshev} system or an ECT-system on a closed interval $[a,b]$ if and only if for any $k,$ $0\leq k\leq n,$ $[u_0,u_1,\ldots,u_k]$ is an ET-system. In order to prove that $\CF$ is a ECT-system on $[a,b]$ it is sufficient and necessary to show that $W(u_0,u_1,\ldots,u_k)(t)\neq 0$ on $[a,b]$ for $0\leq k\leq n,$ see also \cite{KarStu1966}. Here, $W(u_0,u_1,\ldots,u_n)(t)$ denotes the Wronskian of $\mathcal F$ with respect to $t.$ That is, 
\begin{equation*}
W_n(t)=W(u_0,\ldots,u_n)(t)=\det\left(\begin{array}{ccc}
u_0(t)&\cdots&u_n(t)\\
u_0'(t)&\cdots&u_n'(t)\\
\vdots&\ddots&\vdots\\
u_0^{(n)}(t)&\cdots& u_n^{(n)}(t)
\end{array}\right).
\end{equation*}
Furthermore, the sufficient condition to be an ECT-system also provides that each configuration of $m\le n$ zeros, taking into account their multiplicity, is realizable.

The next theorem, proved in \cite{NovTor2017}, extends the results for ECT-systems when some of the Wronskian vanish.
\begin{thm}[\cite{NovTor2017}]\label{thm:NovTor2017}
Let $\CF=[u_0,u_1,\ldots,u_n]$ be an ordered set of analytic functions on $[a, b].$ Assume that all the $\nu_i$ zeros of the Wronskian $W_i$ are simple for $i = 0,\ldots, n.$ Then, the number of isolated zeros for every element of $\Span(\CF)$ does not exceed
\[
n+\nu_n+\nu_{n-1}+2(\nu_{n-2}+\cdots+\nu_0)+\nu_{n-1}+\cdots+\nu_3
\]
where $\nu_i=\min(2\nu_i,\nu_{i-3}+\cdots+\nu_0),$ for $i=3,\ldots,n-1.$
\end{thm}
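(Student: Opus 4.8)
The plan is to prove the bound by the classical \emph{derivative-division} (Rolle) reduction used for ECT-systems, but carried out carefully on the subintervals cut out by the zeros of the Wronskians, so that the poles created along the way are properly charged. I would argue by induction on $n$, controlling at each stage the quantity $\mathcal Z(\CF)$ (the maximum number of zeros, counted with multiplicity, of a nontrivial $v=\sum_{i=0}^{n}a_i u_i\in\Span(\CF)$); since the hypothesis forces all Wronskian zeros to be simple, the resulting bound on $\mathcal Z$ immediately bounds the number of isolated zeros asserted in the statement.

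First I would set up the reduction step. Given $v$, put $g_i=(u_i/u_0)'$ for $i=1,\dots,n$ and form the reduced ordered set $\mathcal G=[g_1,\dots,g_n]$. If $a_0$ is not the only nonzero coefficient, then $Dv:=(v/u_0)'=\sum_{i=1}^{n}a_i g_i\in\Span(\mathcal G)$, so it suffices to bound $\mathcal Z(v)$ by $\mathcal Z(\mathcal G)$ plus the zeros created or destroyed by $v\mapsto(v/u_0)'$. The tool here is a Rolle-type inequality for functions with poles: if $\phi$ has $p$ poles on $[a,b]$, then between two consecutive zeros of $\phi$ there lies either a zero of $\phi'$ or a pole, whence $\mathcal Z(\phi)\le\mathcal Z(\phi')+p+1$. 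Applied to $\phi=v/u_0$, whose poles are exactly the $\nu_0$ zeros of $u_0=W_0$, this gives the one-step estimate $\mathcal Z(v)\le\mathcal Z(Dv)+\nu_0+1$.

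Next I would record how the Wronskians transform under the reduction, via the classical identity $W(g_1,\dots,g_k)=W(u_0,\dots,u_k)/u_0^{k+1}=W_k/W_0^{k+1}$. Thus the $k$-th Wronskian of $\mathcal G$ has exactly the $\nu_k$ zeros of $W_k$ (the zeros of $W_0$ in the denominator become \emph{poles}, not zeros), so the hypotheses are inherited by $\mathcal G$ with the shifted counts $\nu_1,\dots,\nu_n$. Iterating the reduction $n$ times peels off $u_0,u_1,\dots$ one at a time; at the final stage the span is one-dimensional and the surviving function is a nonvanishing multiple of $W_n/(\text{lower Wronskians})$, whose zeros are governed by $\nu_n$. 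Summing the per-step Rolle increments yields the leading $n$ (this is the ideal nonvanishing-Wronskian case), and the accumulated pole counts contribute the $\nu$-dependent terms.

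The delicate part, and \textbf{the main obstacle}, is the combinatorial bookkeeping of those pole counts. A simple zero of an intermediate Wronskian $W_i$ can be charged \emph{twice}: once as a pole met on the descending reduction and once as it reappears as a genuine zero on a later step. This double charge is the source of the coefficient $2$ multiplying $\nu_{n-2}+\cdots+\nu_0$, while the topmost Wronskians $W_n,W_{n-1}$ are encountered only near the bottom of the reduction and so are counted once. However, a local analysis near each simple zero of $W_i$ shows its total contribution can never exceed the number of functions still below it, producing the capped quantities $\tilde\nu_i=\min(2\nu_i,\nu_{i-3}+\cdots+\nu_0)$ and the correction $\tilde\nu_{n-1}+\cdots+\tilde\nu_3$. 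One must verify that, after splitting $[a,b]$ at all Wronskian zeros, the descending and ascending contributions near a given simple zero do not overlap beyond $\min(2\nu_i,\nu_{i-3}+\cdots+\nu_0)$ and that distinct zeros do not interact; checking this sharp, non-overlapping accounting is the real content of the proof, everything else being the standard ECT induction.
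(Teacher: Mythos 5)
You should first note that the paper you are being compared against contains no proof of this statement: Theorem~\ref{thm:NovTor2017} is quoted from \cite{NovTor2017} and used as a black box, so your attempt must stand on its own, and as it stands it is a plan rather than a proof. Your skeleton (derivative--division, Rolle adapted to functions with poles, and the identity $W(g_1,\ldots,g_k)=W(u_0,\ldots,u_k)/u_0^{k+1}$) is indeed the right starting machinery, but the induction does not close as you set it up. After one reduction step the functions $g_i=(u_i/u_0)'=(u_i'u_0-u_iu_0')/u_0^2$ have (generically double) poles at each of the $\nu_0$ simple zeros of $u_0$, so $[g_1,\ldots,g_n]$ is \emph{not} an ordered set of analytic functions on $[a,b]$ and the hypotheses are not ``inherited'' as you claim. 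The obvious repair, splitting $[a,b]$ at the zeros of $W_0$ and applying the inductive bound on each of the $\nu_0+1$ subintervals, multiplies the additive constant $n-1$ by $\nu_0+1$ and destroys the linear-in-$n$ leading term of the asserted bound; getting an additive bound requires controlling how the zeros of the transformed Wronskians distribute among the subintervals, which your outline never does. Moreover, ``the $k$-th Wronskian of $\mathcal{G}$ has exactly the $\nu_k$ zeros of $W_k$'' is not automatic: if $W_k$ and $W_0$ share a zero, cancellation in $W_k/W_0^{k+1}$ can remove it or alter its multiplicity, and you nowhere exclude or handle common zeros.

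The second and decisive gap is that the quantitative bookkeeping producing the exact bound $n+\nu_n+\nu_{n-1}+2(\nu_{n-2}+\cdots+\nu_0)+\sum\min(2\nu_i,\nu_{i-3}+\cdots+\nu_0)$ is explicitly deferred: your closing sentence (``one must verify that \ldots the descending and ascending contributions \ldots do not overlap'') is precisely the content of the theorem, so nothing beyond the classical nonvanishing-Wronskian case has actually been proved. Your heuristic for the cap is also inconsistent with its actual form: you say the local contribution of a simple zero of $W_i$ ``can never exceed the number of functions still below it,'' but the second argument of the min is $\nu_{i-3}+\cdots+\nu_0$, a count of \emph{zeros} of the Wronskians three or more levels down, not a count of functions, and no local analysis in your text produces either quantity. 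Relatedly, the terminal one-dimensional stage is proportional to $W_nW_{n-2}/W_{n-1}^2$ (iterating your own transformation identity already gives, after two steps, Wronskians of the form $W_kW_0^{k-1}/W_1^{k}$), not to ``$W_n$ over lower Wronskians'': the lower Wronskians reappear in \emph{numerators}, which is exactly the re-counting phenomenon that forces the coefficient $2$ and the $\min$ caps, and which your accounting never pins down. Until the inheritance problem is fixed and that charging argument is carried out, the bound is asserted, not established.
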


\subsection{Pseudo-Hopf Bifurcation}
In the well-known Hopf bifurcation (see, for instance, \cite{HalKoc1991}) a limit cycle arises from an equilibrium point when it changes its stability. In piecewise differential systems, the pseudo-Hopf bifurcation describes the same phenomenon but when the sliding segment changes its stability. Analogously to the classical Hopf bifurcation, the proof is a direct consequence of the generalized Poincar\'e--Bendixson Theorem for piecewise differential systems (see, for instance, \cite{BuzCarEuz2018}).

\begin{prop}\label{pr:pseudoHopf}
Let $Z^{\pm}=(X^{\pm}(x,y),Y^{\pm}(x,y))$ be a $\CC^1$ piecewise differential system in two zones separated by the straight line $y=0.$ Additionally, the origin is a stable monodromic equilibrium point and $a\!=\!(\partial Y^{+}/\partial x)|_{(0,0)}\!>\!0.$ Given a real number $b,$ we consider the perturbed system $Z^{\pm}_{b}=(X^{\pm}_b(x,y),Y^{\pm}_b(x,y))$ defined by $X^{\pm}_b(x,y)=X^{\pm}(x,y),$ and $Y^{-}_b(x,y)=Y^{-}(x,y)$ and $Y_{b}^{+}=Y^{+}+b.$ Then, for $b$ small enough, the system $Z^{\pm}_{b}$ exhibits a pseudo-Hopf bifurcation at $b=0$ when $ab\!>\!0.$ See Figure~\ref{fi:pseudohopf}. 
\end{prop}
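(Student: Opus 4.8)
The plan is to establish the pseudo-Hopf bifurcation by reducing the problem to the behavior of the sliding dynamics and the Filippov flow near the origin, following the strategy indicated in the excerpt, namely a direct application of the generalized Poincar\'e--Bendixson Theorem for piecewise differential systems. First I would analyze the sliding set of the perturbed system $Z^{\pm}_b$ on $\Sigma=\{y=0\}$. Since the perturbation only modifies the $Y^+$ component by the additive constant $b$, I would compute the quantities $Z^+_b h$ and $Z^-_b h$ at points $(x,0)$, which reduce to $Y^+_b(x,0)=Y^+(x,0)+b$ and $Y^-(x,0)$. Using the hypothesis $a=(\partial Y^+/\partial x)|_{(0,0)}>0$ together with the fact that the origin is a monodromic equilibrium, I would show that for $b$ of the appropriate sign (so that $ab>0$) a small sliding segment with one endpoint at a fold point emerges near the origin, whereas for the opposite sign no such sliding segment is created on the relevant side. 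The key local computation is that the fold point of $Z^+_b$ occurs where $Y^+(x,0)+b=0$, which by the implicit function theorem and $a\neq 0$ lies at $x\approx -b/a$, giving a sliding segment whose length is $\CO(b)$.

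Next I would determine the \emph{stability} of this emerging sliding segment. The Filippov sliding vector field along $\Sigma^s$ is the convex combination of $Z^+_b$ and $Z^-_b$ that is tangent to $\Sigma$, and its direction is governed by the sign of $Z^+_b h\cdot Z^-_b h$ together with the orientation of the sliding flow toward or away from the fold. I would show that the newly created sliding segment has stability opposite to that of the origin as a monodromic point: since the origin is assumed stable, the small sliding segment appearing for $ab>0$ is unstable (repelling for the sliding flow), which is exactly the configuration that forces a crossing limit cycle to appear between the repelling sliding segment and the stable spiraling behavior farther out. This sign bookkeeping is where I would be most careful, since it depends on reconciling the Filippov convention in the escaping/sliding regions (as recalled in the excerpt via $Z^{\pm}h$ and the fold/invisible-fold classification) with the assumed stability of the origin.

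Finally I would invoke the generalized Poincar\'e--Bendixson Theorem to close the argument. Having an unstable sliding segment near the origin together with a stable monodromic behavior of the unperturbed field (preserved under $\CO(b)$ perturbation by continuity) produces a trapping region, an annular domain bounded on the inside by arcs reaching the repelling sliding segment and on the outside by a crossing curve on which the flow points inward. The generalized Poincar\'e--Bendixson Theorem then guarantees that this annulus contains at least one crossing limit cycle, and a degree or index argument shows it is born precisely as $b$ crosses $0$ with $ab>0$, i.e. the bifurcation occurs at $b=0$.

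I expect the main obstacle to be the careful sign analysis in the second step: correctly identifying, in terms of $a$ and $b$, on which side of $\Sigma$ the sliding segment appears, establishing that its induced stability is opposite to that of the origin, and verifying that this is consistent with the crossing (sewing) region on the complementary side so that a genuine crossing limit cycle—rather than a sliding periodic orbit—is produced. The geometric picture in Figure~\ref{fi:pseudohopf} should serve as the guide for organizing these cases, and once the local configuration near the fold is pinned down, the topological conclusion via the generalized Poincar\'e--Bendixson Theorem is essentially immediate.
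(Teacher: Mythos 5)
Your proposal is correct and takes essentially the same approach as the paper: the paper proves this proposition in one line, as a direct consequence of the generalized Poincar\'e--Bendixson theorem for piecewise differential systems, leaving all details to the cited literature. Your explicit analysis of the displaced fold at $x\approx -b/a$, the resulting repelling (escaping) segment of length $\CO(b)$ when $ab>0$, and the annular trapping region is exactly the sign bookkeeping the paper omits (only note that the segment is repelling because it is an \emph{escaping} region of the planar flow, not because of the direction of the sliding vector field along it).
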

\begin{figure}[h]
\begin{center}
\begin{overpic}[height=1.5cm]{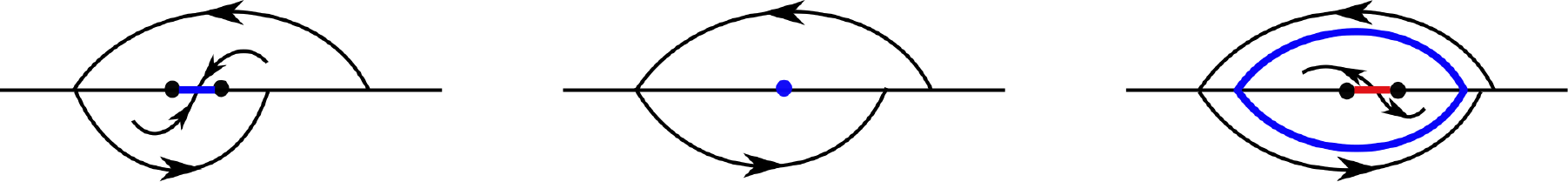}
\put(7,-3){$b<0$}
\put(45,-3){$b=0$}
\put(82,-3){$b>0$}
\end{overpic}
\end{center}
\caption{Pseudo-Hopf bifurcation.}\label{fi:pseudohopf}
\end{figure}

\subsection{Poincar\'e--Miranda Theorem}
The next result is a generalization of the intermediate value theorem. It was conjectured by Poincar\'e in 1883 and proved by Miranda in 1940 (see, for instance, \cite{Kul1997} and the references therein). 

\begin{thm}[\cite{Kul1997}]\label{thm:PoincareMiranda} Let $a$ be a positive real number and $B=[-a,a]^n$ the $n$-dimensional cube. Let $f=(f_1,\ldots,f_n)\!:\!B\rightarrow \R^{n}$ be a continuous function such that $f_i(B_i^-)<0$ and $f_i(B_i^+)>0,$ for each $i\leq n,$ where $B_i^\pm=\{(x_1,\ldots,x_{n})\in B: x_i=\pm a\}.$ Then, there exists a point $c\in B$ such that $f(c)=0.$
\end{thm}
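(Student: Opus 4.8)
The plan is to reduce the statement to Brouwer's fixed point theorem by building an auxiliary continuous self-map of the cube whose fixed points are forced, by the boundary sign conditions, to be zeros of $f.$ First I would introduce the Newton-type correction $x\mapsto x-f(x)$ and compose it with the nearest-point projection onto $B,$ that is, I would define $G=(G_1,\ldots,G_n):B\to B$ by
\[
G_i(x)=\max\bigl(-a,\min\bigl(a,\,x_i-f_i(x)\bigr)\bigr),\qquad i=1,\ldots,n.
\]
Since $f$ is continuous and $\max,\min$ are continuous, $G$ is continuous, and by construction each coordinate $G_i(x)$ lies in $[-a,a],$ so $G$ maps $B$ into itself. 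Brouwer's fixed point theorem then yields a point $c\in B$ with $G(c)=c.$

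The core of the argument is the coordinatewise case analysis at such a fixed point. Fix $i\le n$ and inspect the value $c_i-f_i(c).$ If $c_i-f_i(c)\in[-a,a],$ the clamping is inactive, so $G_i(c)=c_i-f_i(c),$ and the fixed-point equation forces $f_i(c)=0.$ If instead $c_i-f_i(c)>a,$ then $G_i(c)=a,$ hence $c_i=a,$ so $c$ lies on the face $B_i^+;$ the hypothesis $f_i(B_i^+)>0$ gives $f_i(c)>0$ and therefore $c_i-f_i(c)<c_i=a,$ contradicting $c_i-f_i(c)>a.$ Symmetrically, $c_i-f_i(c)<-a$ would put $c$ on $B_i^-,$ where $f_i(c)<0$ yields $c_i-f_i(c)>-a,$ again a contradiction. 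Thus the only admissible case is $f_i(c)=0;$ since $i$ was arbitrary, $f(c)=0.$

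The main obstacle is conceptual rather than computational: one must design the self-map so that the boundary behaviour of $f$ prevents the projection from being active at a fixed point. The prescribed signs are exactly what is needed—on each face $B_i^\pm$ the $i$-th component of the correction $x_i-f_i(x)$ points strictly back into the interior of $[-a,a],$ so clamping can never produce a fixed point on $\partial B.$ An alternative route avoids invoking Brouwer as a black box and instead uses topological degree: one checks that $f$ does not vanish on $\partial B$ (immediate from the sign conditions) and that the straight-line homotopy $H(x,t)=(1-t)f(x)+t\,x$ stays nonzero on $\partial B,$ because on $B_i^+$ one has $H_i=(1-t)f_i(x)+ta>0$ and on $B_i^-$ one has $H_i<0$ for all $t\in[0,1].$ Homotopy invariance then gives $\deg(f,B,0)=\deg(\mathrm{id},B,0)=1\neq0,$ forcing a zero of $f$ in the interior. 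Either way, the whole content of the theorem is the compatibility between the prescribed boundary signs of $f$ and those of the identity map.
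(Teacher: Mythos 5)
Your proposal is correct, but note that the paper does not prove this statement at all: Theorem~\ref{thm:PoincareMiranda} is imported as a known result, stated with the citation \cite{Kul1997} and the historical remark that Poincar\'e conjectured it and Miranda proved it, so there is no in-paper argument to match yours against. What you supply is a complete, self-contained proof, and both of your routes are sound. The clamped map $G_i(x)=\max\bigl(-a,\min\bigl(a,\,x_i-f_i(x)\bigr)\bigr)$ is continuous and self-mapping on $B$, and your case analysis at a Brouwer fixed point is airtight: the strict sign hypotheses on the faces $B_i^\pm$ rule out active clamping, and the unclamped case forces $f_i(c)=0$ coordinatewise (indeed, combined with the strict signs this also shows $c$ lies off every face, so the zero is interior). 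This is essentially Miranda's original 1940 observation that the theorem is equivalent to Brouwer's fixed point theorem; Kulpa's cited article instead develops a combinatorial, Sperner-type cubical argument that yields the result without presupposing Brouwer, so your first route is shorter but depends on a deeper black box, while Kulpa's is more elementary in its prerequisites. Your degree-theoretic variant is also correct --- the homotopy $H(x,t)=(1-t)f(x)+tx$ satisfies $H_i>0$ on $B_i^+$ and $H_i<0$ on $B_i^-$ for all $t\in[0,1]$ because the two weights are never simultaneously zero, and every point of $\partial B$ lies on some face, so $\deg(f,B,0)=\deg(\mathrm{id},B,0)=1$ --- and it has the advantage of generalizing immediately to weaker (non-strict) sign conditions by a limiting argument, at the cost of invoking degree theory, which is machinery of the same strength as Brouwer.
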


\section{First order perturbation} \label{se:3}

In this section the first order averaging method is used to study the limit cycles of the perturbed piecewise vector field \eqref{eq:8} when the unperturbed vector field $Z_0$ is a quadratic isochronous center in one of the families $S_1,$ $S_2,$ or $S_3.$
Regarding \eqref{eq:8} we shall denote $Z_{\e}=Z_{i,\e}$ and $Z_{\e}^{\pm}=Z_{i,\e}^{\pm}$ in order to indicate that $Z_0\in S_i,$ for $i=1,2,3.$ Here, it is only considered quadratic polynomial perturbations, that is 
\[
 P_k^{\pm}(x,y)=\sum_{j=0}^{2}\sum_{i=0}^{j}p^{\pm}_{k,i,j-i}x^{i}y^{j-i} \quad\text{and}\quad Q_k^{\pm}(x,y)=\sum_{j=0}^{2}\sum_{i=0}^{j}q^{\pm}_{k,i,j-i}x^{i}y^{j-i}.
\]
The first order analyses for families $S_1,$ $S_3,$ and $S_2$ are performed in Propositions~\ref{pr:S1O1}, \ref{pr:S3O1}, and \ref{pr:S2O1}, respectively. For the families $S_1$ and $S_2,$ we shall also use the ECT-system properties to study the bifurcation of limit cycles in the global interval of definition. Accordingly, Propositions~\ref{pr:S1O1} and \ref{pr:S3O1} are concerned about upper bounds (up to a first order analysis) for the maximum number of limit cycles bifurcating from the period annulus (the so-called medium amplitude limit cycles). In the third result, Proposition~\ref{pr:S2O1}, a local analysis is performed around the center point. In this case, we also see how the number of limit cycles changes when we consider different lines of discontinuity. 

Before stating the main results of this section we briefly discuss the choosing of the lines of discontinuity. The birational linearizations of families $S_1$ and $S_3$ (see, for instance, \cite{ChaSab1999}) transform the straight line $\{x=0\}$ into another straight line passing through the origin. Moreover, $\{x=0\}$ is the unique straight line for which this happens. This is the main reason for choosing $\Sigma=\{x=0\}$ as the curve of discontinuity. The birational linearization of the family $S_2$ transforms straight lines passing through the origin into straight lines passing through the origin, so that we are allowed to choose any straight line passing through the origin as the curve of discontinuity. Nevertheless, in this last case, since the computations are more intricate we only study the limit cycles bifurcating from the origin. We anticipate that all the conclusions of this section will be improved by results of the next section.

\begin{prop}\label{pr:S1O1}
For $|\e|>0$ sufficiently small the averaging method of first order predicts at most $5$ crossing limit cycles for the piecewise quadratic vector field $Z_{1,\e}$ when the curve of discontinuity is the straight line $\{x=0\}.$ Moreover, this number is reached.
\end{prop}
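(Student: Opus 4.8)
The plan is to apply the first order averaging result, Theorem~\ref{thm:averagingF1F2}(i), which reduces the counting of crossing limit cycles to the counting of simple zeros of the averaged function $\mathcal{F}_1(r)$ on the interval $(0,\rho^*)$. First I would write the perturbed system $Z_{1,\e}^{\pm}$ explicitly, with $Z_0=S_1$ and the quadratic perturbation $(P_1^{\pm},Q_1^{\pm})$ carrying the twelve coefficients $p^{\pm}_{1,i,j-i}$ and $q^{\pm}_{1,i,j-i}$ per side. With $\Sigma=\{x=0\}$, the discontinuity angle is $\al=\pi/2$, so the half-plane $\Sigma^+$ corresponds to $\pi/2<\T<3\pi/2$ and $\Sigma^-$ to $-\pi/2<\T<\pi/2$. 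Passing to polar coordinates and dividing by $\dot\T$ (which for the isochronous center $S_1$ is, to leading order, constant) yields the functions $F_1^{\pm}(\T,r)$ appearing in \eqref{eq:drdt}.

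Next I would compute $\mathcal{F}_1(r)$ from \eqref{eq:F1F2}, namely $\mathcal{F}_1(r)=\int_{\pi/2}^{3\pi/2}\bigl(F_1^+(\T,r)+F_1^-(\T-\pi,r)\bigr)\,d\T$. The key structural fact I expect is that, because the center is isochronous and the perturbation is quadratic, each $F_1^{\pm}(\T,r)$ is a finite sum of terms of the form (trigonometric polynomial in $\T$) times a power of $r$, so that after integrating in $\T$ the result is a polynomial, or a sum of a few monomials, in $r$ whose coefficients are explicit linear combinations of the perturbation parameters. Collecting powers of $r$, I would obtain
\begin{equation*}
\mathcal{F}_1(r)=a_0+a_1 r+a_2 r^2+a_3 r^3+a_4 r^4+a_5 r^5,
\end{equation*}
where each $a_k$ is a linear form in the coefficients $p^{\pm}_{1,\cdot},q^{\pm}_{1,\cdot}$. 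The essential point is that the set of generating functions $[1,r,r^2,\ldots,r^5]$ forms an ECT-system on any interval $(0,\rho^*)\subset(0,+\infty)$, since the Wronskians $W(1,r,\ldots,r^k)$ are, up to a nonzero constant, equal to a nonvanishing power of $r$. By the ECT property recalled after Theorem~\ref{thm:NovTor2017}, any nontrivial element of the span has at most $5$ zeros, and every configuration of at most $5$ simple zeros in $(0,\rho^*)$ is realizable. This gives the upper bound of $5$ crossing limit cycles and, simultaneously, shows the bound is sharp: one chooses perturbation parameters realizing the linear map from coefficients onto $(a_0,\ldots,a_5)$ so that $\mathcal{F}_1$ has $5$ simple positive zeros.

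The main obstacle will be twofold. First, I must verify that the monomials in $r$ that survive the $\T$-integration actually span the full space $[1,r,\ldots,r^5]$, i.e. that the linear map from the $24$ perturbation coefficients to the coefficient vector $(a_0,\ldots,a_5)\in\R^6$ is surjective; this is a concrete but bookkeeping-heavy computation of the integrals $\int_{\pi/2}^{3\pi/2}(\cos\T)^i(\sin\T)^j\,d\T$ and their counterparts, collected in the appendix. If some coefficient $a_k$ fails to be independently controllable, the effective number of generators—and hence the bound—would drop, so confirming independence is exactly what pins the count at $5$. Second, for sharpness I would invoke the realizability clause of the ECT theory, choosing the $a_k$ to make $\mathcal{F}_1$ proportional to a polynomial with five prescribed simple roots in $(0,\rho^*)$; I would then exhibit an explicit parameter choice and check $\mathcal{F}_1'\neq0$ at each root so that Theorem~\ref{thm:averagingF1F2}(i) produces five genuine crossing limit cycles.
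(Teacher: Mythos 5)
Your proposal breaks down at the very first step, before any ECT argument is reached. In the original $(x,y)$ coordinates the unperturbed field $S_1$ does \emph{not} have circles as orbits: a direct computation gives $\dot r=r^2\cos\T$ and $\dot\T=1+r\sin\T$, so $dr/d\T$ is not $O(\e)$ and the system is not of the form \eqref{eq:drdt}; Theorem~\ref{thm:averagingF1F2} therefore does not apply in the setup you describe. Isochronicity does not rescue this: it concerns the period, not the constancy of $\dot\T$ or the vanishing of $\dot r$, and in any case the proposition concerns the whole period annulus, not just small $r$. The paper's proof first performs the birational linearization \eqref{ls1}, which sends $S_1$ to the linear center $(u',v')=(-v,u)$ and the line $\{x=0\}$ to $\{v=0\}$ (so the relevant angle is $\al=0$, not $\pi/2$); only after this change is the radial equation of the required perturbative form. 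This change of variables is the missing idea, and it is also what determines the true analytic structure of $F_1^{\pm}$: being a rational change, it produces rational integrands with denominator $(2r\cos\T-r^2-1)^2$, not trigonometric polynomials.

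As a consequence, your ``key structural fact'' --- that $\CF_1(r)=a_0+a_1r+\cdots+a_5r^5$ --- is false. The actual averaged function, computed in the paper via the integrals of Section~\ref{se:5} after the substitutions $r=(\mp1\pm\sqrt{1-R^2})/R$ and the change of parameters \eqref{eq:pqs1}, lies in the span of
\[
\Bigl[\,1,\ r,\ r^2,\ r^3,\ \tfrac{1-r^2}{r}\,L(r),\ r(1-r^2)\,L(r)\Bigr],
\qquad L(r)=\log\Bigl(\frac{1-r}{1+r}\Bigr),
\]
with six independently controllable coefficients. Six monomials $[1,r,\ldots,r^5]$ would indeed trivially form an ECT-system, but that bounds the zeros of the wrong function. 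The genuine work, which your plan never reaches, is proving that the set above is an ECT-system on $(0,1)$: the Wronskians $W_4$ and $W_5$ involve logarithmic terms and are shown to be nonvanishing by a monotonicity argument ($\overline W_4(0)=0$ together with $\overline W_4'(r)>0$, and similarly for $\overline W_5$). So the number $5$ in your sketch, while it matches the statement, is obtained for an incorrect function space; without the linearization and the log-type basis neither the upper bound nor the realizability claim is justified.
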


\begin{proof} In order to apply Theorem~\ref{thm:averagingF1F2}, we have to write the vector field \eqref{eq:9} as a differential equation \eqref{eq:drdt}. So, we first proceed with the change of variables (see, for instance, \cite{ChaSab1999})
\begin{equation}\label{ls1}
x=-\dfrac{v}{v^2+(u-1)^2}\quad \textrm{and}\quad y=-\dfrac{u^2+v^2-u}{v^2+(u-1)^2},
\end{equation}
which has the following rational inverse
\begin{equation}\label{ls1inverse}
u=\dfrac{x^2+y^2+y}{x^2+y^2+2 y+1}\quad \textrm{and}\quad v=-\dfrac{x}{x^2+y^2+2y+1}.
\end{equation}
With this change of variables the differential equation $S_1$ becomes the linear center $(u',v')=(-v,u)$ and the line of discontinuity becomes $v=0.$ 

Then, we change to polar coordinates $u= r \cos\T$ and $v=r \sin \T.$ Taking $\T$ as the new independent variable, \eqref{eq:9} becomes
\begin{equation}\label{trans1}
r'(\T)=\dfrac{\dot r}{\dot{\T}}=\e\dfrac{\A(r \cos \T,r\sin\T)}{\CC(\T,r)}+\CO(\e^2),
\end{equation}
where $\CC(\T,r)= (2 r \cos \T-r^2-1)^2$ and $\A$ is the piecewise function
\begin{equation}\label{calA}
\A(r \cos \T,r\sin\T)=\left\{\begin{array}{l} \A^+(r \cos \T,r\sin\T) \quad \textrm{if} \quad 0<\T\leq \pi,\\
\A^-(r \cos \T,r\sin\T) \quad \textrm{if} \quad \pi<\T\leq 2\pi,\end{array}\right.
\end{equation}
being $\A^{\pm}$ polynomials of degree $3.$

From here we want to use the integral formulas of Section~\ref{se:5} to compute the averaged function $\CF_1,$ as stated in \eqref{eq:F1F2}, for $\al=0.$ The denominators of $F_1^+(\T,r)$ and $F_1^-(\T-\pi,r)$ write $(2r\cos \theta-r^2-1)^2$ and $(2r\cos \theta+r^2+1)^2,$ respectively. In order to use the integrals we must apply a transformation on $r$ in order to get the denominators written in a standard form. 

Firstly, take $r=(-1+\sqrt{1-R^2})/R.$ The denominator of $F_1^+(\T,r)$ is transformed into $2 R^2(R\cos \theta+1)^2(R^2+2\sqrt{1-R^2}-2).$
Hence, the first part of the first averaged function
\begin{equation*}\label{int1}
\int_{0}^{\pi}F_1^+(\T,(-1+\sqrt{1-R^2})/R)d\T
\end{equation*}
can be computed using the integrals \eqref{eq:11} for $\alpha=0,$ $\ell=2,$ and $k\in\{0,1,2,3\}.$ We shall suppress it here. The original variable $r$ is recovered by taking $R=-2r/(r^2+1).$

Secondly, take $r=(1-\sqrt{1-R^2})/R.$ The denominator of $F_1^-(\T-\pi,r)$ is transformed into $2 R^2(R\cos \theta+1)^2(R^2+2\sqrt{1-R^2}-2).$ Hence, the second part of the first averaging function
\begin{equation*}\label{int2}
\int_{0}^{\pi}F_1^-(\T-\pi,(1-\sqrt{1-R^2})/R)d\T
\end{equation*}
can be computed also using the integrals \eqref{eq:11} for $\alpha=0,$ $\ell=2,$ and $k\in\{0,1,2,3\}.$ The original variable $r$ is recovered by taking $R=2r/(r^2+1).$

Adding up the above integrals, we get the averaged function $\CF_1(r).$ Proceeding with the change of parameters
\begin{equation}\label{eq:pqs1}
\begin{aligned}
p^-_{1,0,0}&=p^+_{1,0,0}-2 k_4+\dfrac{k_0}{2},\\
p^-_{1,0,1}&=p^-_{1,0,2}+p^+_{1,0,1}-p^+_{1,0,2}-2 k_4+2 k_5+\dfrac{k_2+k_0}{2},\\
p^-_{1,1,0}&=-p^+_{1,1,0}+2( q^-_{1,0,0}+ q^+_{1,0,0})- \dfrac{ q^-_{1,0,2}+ q^-_{1,2,0} + q^+_{1,0,2} + q^+_{1,2,0}}{2} + \dfrac{k_1-k_3}{\pi},\\
p^-_{1,2,0}&=p^+_{1,2,0}+\dfrac{q^-_{1,1,1}- q^+_{1,1,1}}{2}-2 k_4-2 k_5,\\
q^-_{1,0,1}&=2q^-_{1,0,0}+2 q^+_{1,0,0}- q^+_{1,0,1}+\dfrac{q^-_{1,0,2}+ q^-_{1,2,0}+ q^+_{1,0,2}+ q^+_{1,2,0}}{2}+ \dfrac{k_1+2 k_3}{\pi} ,\\
q^+_{1,1,0}&=q^-_{1,1,0}+\dfrac{q^+_{1,1,1}-q^-_{1,1,1}+k_2+k_0}{2},\\
\end{aligned}
\end{equation}
we get
\begin{equation}\label{mels1}
\CF_1(r)= \sum_{n=0}^5 k_n f_n(r),
\end{equation}
where
\[
\begin{array}{l}
f_0(r)=1,\quad f_1(r)=r,\quad f_2(r)=r^2, \quad
f_3(r)=r^3,\vspace{0.2cm}\\
f_4(r)=\dfrac{1-r^2}{r}L(r), \quad
f_5(r)=r(1-r^2)\,L(r),
\end{array}
\]
and
\begin{equation}\label{log}
L(r)=\log\left(\frac{1-r}{1+r}\right). 
\end{equation}
Clearly, from \eqref{eq:pqs1}, the parameters $k_n$ can be chosen arbitrarily.

The maximum number of simple zeros that \eqref{mels1} can have follows by studying the Wronskians of the ordered set $[f_0,f_1,\ldots,f_5].$ Straightforward computations show that
\[
\begin{array}{l}
W_0(r)=1, \quad W_1(r)=1, \quad W_2(r)=2,\quad W_3(r)=12,\vspace{0.2cm}\\
W_4(r)=\dfrac{288}{r^5} \, \overline{W}_4(r), \quad W_5(r)=\dfrac{9216(r^2+5)}{(1-r^2)^4r^6}\,\overline{W}_5(r), 
\end{array}
\]
where
\[
\overline{W}_4=L(r)-\dfrac{2}{3}\,\dfrac {r \left( 7r^2-8r^2+3 \right) }{(r^2-1)^3}, \qquad
\overline{W}_5=L(r)-\dfrac{2}{3}\,\dfrac {r \left( 3r^2-22r^2+15 \right) }{(r^2-1)^2(r^2+5)}.
\]
Clearly, $W_0, W_1, W_2,$ and $W_3$ do not vanish in $(0,1).$ Now, computing the derivative 
\[
\overline{W}_4'(r)=\dfrac{4 r^4(5r^2+1)}{(r^2-1)^4}>0,
\]
as $\overline{W}_4(0)=0,$ also $W_4(r)$ is does not vanish in $(0,1).$ The same argument applies for $W_5(r),$ but using 
\[
\overline{W}_5'(r)=\dfrac{64r^6}{(r^2-1)^3(r^2+5)^2}.
\]
So, the proof follows by noticing that the ordered set of functions $[f_0,\ldots,f_5]$ is an ECT-system.
\end{proof}

\begin{prop}\label{pr:S3O1}
For $|\e|>0$ sufficiently small the averaging method of first order predicts at most $5$ crossing limit cycles for the piecewise quadratic vector field $Z_{3,\e}$ when the curve of discontinuity is the straight line $\{x=0\}.$ Moreover, this number is reached
.\end{prop}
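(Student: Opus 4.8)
The plan is to mirror, step by step, the argument just used for Proposition~\ref{pr:S1O1}, since $S_3$ is likewise birationally equivalent to the linear center. First I would invoke the birational linearization of $S_3$ from \cite{ChaSab1999}, which conjugates $S_3$ to the linear center $(u',v')=(-v,u)$ and carries the discontinuity line $\{x=0\}$ to a straight line through the origin; exactly as for $S_1$, this works precisely because $\{x=0\}$ is the unique line through the origin whose image is again a line. After an inconsequential rotation we may assume that image is $\{v=0\}$.

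Next, passing to polar coordinates $u=r\cos\T$, $v=r\sin\T$ and taking $\T$ as the independent variable, the perturbed system becomes a piecewise equation of the form \eqref{eq:drdt}, whose $\e$-order term reads $\A(r\cos\T,r\sin\T)/\CC(\T,r)$, with $\CC$ the square of a quadratic in $(r\cos\T,r\sin\T)$ arising from the Jacobian of the birational change, and $\A^{\pm}$ polynomials. I would then compute $\CF_1(r)$ from \eqref{eq:F1F2} with $\al=0$: after substitutions on $r$ analogous to $r=(-1+\sqrt{1-R^2})/R$ and $r=(1-\sqrt{1-R^2})/R$ that normalise the two denominators of $F_1^{\pm}$ to the standard form $(R\cos\T+1)^2$, the two half-integrals are evaluated with the formulas \eqref{eq:11} for $\ell=2$ and $k\in\{0,1,2,3\}$, and $r$ is recovered via $R=\pm 2r/(r^2+1)$.

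After a linear reparametrisation of the perturbation coefficients in the spirit of \eqref{eq:pqs1}, I expect $\CF_1$ to take the form $\sum_{n=0}^{5}k_n g_n(r)$ with the $k_n$ free. By the structure of the integrals \eqref{eq:11}, the generators $g_n$ should again be a mix of low-degree rational and polynomial terms together with two terms carrying the logarithm $L(r)=\log((1-r)/(1+r))$, though with coefficients specific to $S_3$. To obtain the bound and its sharpness I would show that $[g_0,\dots,g_5]$ is an ECT-system on the relevant interval (which I expect to again be $(0,1)$), that is, that the six Wronskians $W_0,\dots,W_5$ never vanish there; an ECT-system of six functions has at most $5$ zeros and realises every admissible configuration, yielding both ``at most $5$'' and ``reached''.

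The main obstacle is the non-vanishing of the two top Wronskians $W_4,W_5$, which inherit the logarithm. As in Proposition~\ref{pr:S1O1}, I would factor out the elementary part, reducing each to a quantity $\overline{W}_k(r)$ that vanishes at $r=0$ and whose derivative is sign-definite on the interval. The difficulty is that the rational parts of $\overline{W}_4,\overline{W}_5$ for $S_3$ will differ from, and likely be bulkier than, those for $S_1$, so correctly isolating the reduced quantity and verifying $\overline{W}_k(0)=0$ together with a one-signed $\overline{W}_k'(r)$ is the delicate computational point on which the whole proof hinges.
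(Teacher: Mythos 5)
Your proposal is correct and follows essentially the same route as the paper's proof: birational linearization of $S_3$ from \cite{ChaSab1999}, polar coordinates, evaluation of $\CF_1$ via the integral formulas \eqref{eq:11}, and the ECT/Wronskian argument with the reduced-Wronskian trick ($\ov W(0)=0$ plus sign-definite derivative) for the log-bearing case. Only your anticipated computational specifics differ from what actually occurs: the normalizing substitutions are the linear ones $r=\mp R/8$ (the denominator becomes $(R\cos\T+1)^4$, so $\ell=4$ and $k\in\{0,1,\ldots,6\}$ since $\A^{\pm}$ have degree $6$, with $R=\mp 8r$), the resulting $\CF_1$ also contains $\sqrt{1-r^2}$ terms that the paper rationalizes via the change $r=2\rho/(1+\rho^2)$ to reach the basis $[\rho,\rho^2,\rho^3,\rho^4+1,\rho^5,L(\rho)/\rho]$ on $(0,1)$, and in the end only $W_5$ inherits the logarithm ($W_4=288\rho(\rho^4-5)$ is purely polynomial), so the delicate step you single out arises just once.
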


\begin{proof} We shall follow the same procedure of the proof of Proposition~\ref{pr:S1O1}. The linearization stated in \cite{ChaSab1999} is given by 
\begin{equation*}\label{2s1}
x=\dfrac{3 u}{8 v +1}\quad y=\dfrac{3(4 u^2+8 v^2+v)}{(8 v+1)^2},
\end{equation*}
which has the following rational inverse 
\begin{equation*}\label{2s1inverse}
u=\dfrac{3x}{32x^2-24y+9}\quad \textrm{and}\quad v=\dfrac{-4x^2+3y}{32x^2-24y+9}.
\end{equation*}
Then, applying the change of variables $u= r \sin\T$ and $v=-r \cos \T,$ and taking $\T$ as the new independent variable, equation \eqref{eq:drdt} becomes
\begin{equation*}
r'(\T)=\dfrac{\dot r}{\dot{\T}}=\e\dfrac{\A(\T,r)}{\CC(\T,r)}+\CO(\e^2),
\end{equation*}
where $\CC(\T,r)= 8r\cos \theta-1$ and $\A$ is the piecewise function
\begin{equation*}
\A(\T,r)=\left\{\begin{array}{l} \A^+(r \sin \T,-r\cos\T) \quad \textrm{if} \quad 0<\T\leq \pi,\\
\A^-(r \sin \T,-r\cos\T) \quad \textrm{if} \quad \pi<\T\leq 2\pi,\end{array}\right.
\end{equation*}
being $\A^{\pm}$ polynomials of degree $6.$ 

Now we compute the averaged function \eqref{eq:F1F2} for $\alpha=0.$ As in the proof of Proposition~\ref{pr:S1O1}, the denominators of $F_1^+(\T,r)$ and $F_1^-(\T-\pi,r)$ are not written in a standard form in order to use directly the integrals of Section~\ref{se:5}. 

Firstly, take $r=-R/8.$ The denominator of $F_1^+(\T,r)$ in \eqref{eq:F1F2} becomes $(R\cos \theta+1)^4.$ Hence, the integral
\[
\int_{0}^{\pi}F_1^+(\T,-R/8)d\T
\]
can be computed using \eqref{eq:11} for $\alpha=0,$ $\ell=4,$ and $k\in\{0,1,\ldots,6\}.$ The original variable $r$ is recovered taking $R=-8r.$

Secondly, take $r=R/8.$ The denominator of $F_1^-(\T-\pi,r)$ in \eqref{eq:F1F2} also becomes $(R\cos \theta+1)^4.$ Hence, the integral
\[
\int_{0}^{\pi}F_1^-(\T-\pi,R/8)d\T,
\]
can be also computed using \eqref{eq:11} for $\alpha=0,$ $\ell=4,$ and $k\in\{0,1,\ldots,6\}.$ The original variable $r$ is recovered taking $R=8r.$

Adding up the above integrals we obtain the first averaged function $\CF_1(r),$ which depends on $r,$ $\sqrt{1-r^2},$ and $L(r)$ defined in \eqref{log}. Proceeding with the change 
\begin{equation}\label{change}
r=2\rho/(1+\rho^2),
\end{equation}
the averaged function writes
\begin{equation}\label{mels3}
\widetilde\CF_1(\rho)= \sum_{n=0}^5 k_n f_n(\rho),
\end{equation}
where
\[
\begin{array}{l}
f_0(\rho)=\rho,\quad f_1(\rho)=\rho^2, \quad f_2(\rho)=\rho^3,\vspace{0.2cm}\\
f_3(\rho)=\rho^4+1, \quad f_4(\rho)=\rho^5, \quad f_5(\rho)=L(\rho)/\rho. \vspace{0.2cm}\\
\end{array}
\]
We remark that $L(r)=2L(\rho).$

The maximum number of simple zeros that \eqref{mels3} can have follows by studying the Wronskians of the ordered set $[f_0,f_1,\ldots,f_5].$ Straightforward computations show that.
\[
\begin{array}{l}
W_0(\rho)=\rho, \quad W_1(\rho)=\rho^2, \quad W_2(\rho)=2\rho^3 ,\quad W_3(\rho)=12(\rho^4-1),\vspace{0.2cm}\\
W_4(\rho)=288\rho(\rho^4-5), \quad W_5(\rho)=\dfrac{207360(1-\rho^4)}{\rho^5}\,\ov W_5(\rho),
\end{array}
\]
where
\[
\ov W_5(\rho)=L(\rho)-\,{\frac {\rho\, \left( 75\,{\rho}^{12}-175\,{\rho}^{10}+61\,{\rho}^{
8}+95\,{\rho}^{6}-230\,{\rho}^{4}+140\,{\rho}^{2}-30 \right) }{15 \left( 
\rho^2-1 \right)^{6}\left( {\rho}^{2}+1
 \right) }}.
\]
Clearly $W_0, W_1, W_2 , W_3,$ and $W_4$ do not vanish in $(0,1).$ The last Wronskian does not vanish either because $\ov W_5(0)=0$ and the derivative 
\[
\ov W_5'(\rho)\!=\!\,{\frac {{\rho}^{4} (\rho^4-5)(105\rho^8+105\rho^6+175\rho^4-5\rho^2+4) }{
 15\left( \rho^2\!-\!1 \right) ^{7} \left( {\rho}^{2}\!+\!
1 \right) ^{2}}}
\]
is positive for every $\rho\in(0,1).$ So, the proof follows by noticing that the ordered set of functions $[f_0,\ldots,f_5]$ is an ECT-system.
\end{proof}

The global analysis performed in the previous results cannot be performed in a straightforward way for the family $S_2.$ Hence, for this family we provide only a local analysis around the origin. 

\begin{prop}\label{pr:S2O1}
For $|\e|>0$ sufficiently small and under the condition $P^{\pm}(0,0)=Q^{\pm}(0,0)=0,$ the averaging method of first order predicts at most $4, 5,$ or $7$ limit cycles bifurcating from the origin for the quadratic vector field $Z_{2,\e}$ when the curve of discontinuity is the straight line $\{x=0\},$ $\{y=0\},$ or $\{y+\sqrt{3}x=0\},$ respectively. Moreover, these numbers are reached.
\end{prop}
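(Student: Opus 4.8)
The plan is to follow the scheme of Propositions~\ref{pr:S1O1} and \ref{pr:S3O1}, but to replace the global ECT-argument by a local (degenerate Hopf) analysis at the origin, carried out separately for the three discontinuity lines. First I would apply the birational linearization of $S_2$ provided in \cite{ChaSab1999}, which turns the center into the linear center $(u',v')=(-v,u)$ and, crucially, maps every straight line through the origin to another straight line through the origin. Thus each of the candidate lines $\{x=0\}$, $\{y=0\}$, and $\{y+\sqrt{3}x=0\}$ becomes a line through the origin making some angle $\alpha_j$ with the axes, and after passing to polar coordinates the perturbed system takes the form \eqref{eq:drdt}. I would then compute the first averaged function $\mathcal{F}_1$ from \eqref{eq:F1F2}, with the integration limits $[\alpha_j,\alpha_j+\pi]$ dictated by the line and the integral formulas of Section~\ref{se:5}, exactly as before; the output is a finite linear combination, with coefficients depending linearly on the first-order perturbation parameters $p^{\pm}_{1,i,j},q^{\pm}_{1,i,j}$, of powers of $r$, of $\sqrt{1-r^2}$, and of the logarithmic function $L(r)$ of \eqref{log}. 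The hypothesis $P^{\pm}(0,0)=Q^{\pm}(0,0)=0$ keeps the origin a singularity, so $\mathcal{F}_1(0)=0$ and its expansion starts at a positive power of $r$.

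Since the statement concerns bifurcation \emph{from the origin}, only the behaviour of $\mathcal{F}_1$ near $r=0$ matters. Each generating function above is analytic at the origin (in particular $L(r)$ and $L(r)/r$ have convergent Taylor series there), so I would expand $\mathcal{F}_1(r)=\sum_{i\ge 1}v_i\,r^{i}$, where each generalized Lyapunov quantity $v_i$ is a linear functional of the perturbation parameters. The number of crossing limit cycles bifurcating from the origin is governed by the rank $N_j$ of the map $(p^{\pm},q^{\pm})\mapsto(v_1,v_2,\dots)$: collecting the independent quantities in the monomials $r^{i_1}<\cdots<r^{i_{N_j}}$ and factoring out the lowest power shows that $\mathcal{F}_1$ lies in the span of $N_j$ functions with pairwise distinct leading orders, which form a Chebyshev-like system near $0$ and hence admit at most $N_j-1$ zeros accumulating at $r=0$; conversely, since these $N_j$ quantities can be prescribed independently, choosing them with suitable magnitudes and alternating signs produces $N_j-1$ simple positive zeros close to the origin, each yielding a crossing limit cycle by Theorem~\ref{thm:averagingF1F2}(i) (the associated orbits cross $\Sigma$ transversally for $|\e|$ small, so they lie in $\Sigma^c$). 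The proposition therefore reduces to establishing $N_1=5$, $N_2=6$, and $N_3=8$.

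The hard part will be the explicit computation of the $v_i$ for the three angles and the rigorous determination of these ranks. I expect the dependence on $\alpha_j$ to be the whole point: for the two coordinate axes the line of discontinuity sits symmetrically with respect to the linearized center, and this symmetry forces linear relations among, or the vanishing of, several of the functionals $v_i$, collapsing the rank to $5$ and $6$; the off-axis angle attached to $\{y+\sqrt{3}x=0\}$ breaks this symmetry and liberates two additional independent quantities, raising the rank to $8$. Concretely I would (i) write each $v_i$ as an explicit linear form in the parameters, (ii) exhibit, for each line, a subset of $N_j$ of them that is linearly independent, which supplies the lower bound through the sign-alternation and realizability argument above, and (iii) show that all remaining $v_i$ belong to the span of these $N_j$, which supplies the upper bound. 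Step (iii) is the delicate one: it amounts to proving that the matrix of coefficient functionals has exactly rank $N_j$ and that no further independent quantity appears at higher order, so that the count of zeros near $0$ is genuinely saturated at $N_j-1$.
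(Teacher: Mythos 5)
Your proposal follows essentially the same route as the paper's proof: after the birational linearization of $S_2$, the paper computes $\mathcal{F}_1$ explicitly via the integrals of Section~\ref{se:5} for $\alpha=0,-\pi/2,-\pi/3$ and, after a change of parameters, exhibits bases of $5$, $6$, and $8$ functions with freely choosable coefficients and pairwise distinct leading orders ($f_i(\rho)=\rho^{i+1}+O(\rho^{i+2})$, together with $\rho^{8}$ and $\rho^{10}$ in the oblique case), which is precisely your rank computation $N_1=5$, $N_2=6$, $N_3=8$ combined with the local leading-order/ECT argument and realizability. The only caveats are that your steps (i)--(iii) are exactly where all the work lies (the paper settles them by producing the closed-form basis, not by an abstract rank argument), and that your symmetry heuristic is only partially accurate: $\{x=0\}$ is a reversibility axis of $S_2$, but $\{y=0\}$ is not, and yet its rank is still $6<8$.
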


\begin{proof} 
The linearization stated in \cite{ChaSab1999} for family $S_2$ is given by
\begin{equation*}\label{ls2}
x=-\dfrac{u}{v-1}\quad \textrm{and}\quad y=-\dfrac{v}{v-1},
\end{equation*}
which has the following rational inverse 
\begin{equation*}\label{invls2}
u=\dfrac{x}{y+1}\quad \textrm{and}\quad v=\dfrac{y}{y+1}.
\end{equation*}
As we have commented before, straight lines passing through the origin are transformed into straight lines passing through the origin.	
	
Firstly, assume that $\Sigma=\{x=0\}.$ Applying the change of variable $(u,v)=(r\sin \theta,-r\cos \theta)$ and taking $\theta$ as the new independent variable we obtain the equivalent functions \eqref{trans1} and \eqref{calA}. Here, $\A^{\pm}$ are cubic polynomials and the denominator becomes $\CC(\T,r)=1+r \cos \T.$ For this case, the first averaged function $\CF_1$ is given by $\eqref{eq:F1F2}$ for $\alpha=0.$ Since $\CC(\T,r)=1+r\cos \theta$ is the denominator of $F^{+}_1(\T,r)$ in \eqref{eq:F1F2}, the integrals \eqref{eq:11} can be used directly. Nevertheless, the denominator of $F^{-}_1(\T-\pi,r)$ in \eqref{eq:F1F2} is given by $\CC(\T-\pi,r)=1-r\cos \theta,$ so it is necessary to proceed with the change $r=-R$ in order to use the integrals \eqref{eq:11}. Applying the integrals \eqref{eq:11} for $\ell=1$ and $k\in\{0,1,2,3\},$ and going back to the original variable $r$ we have computed the first averaged function $\CF_1(r).$ Finally, with the change \eqref{change} and after some algebraic manipulations, we get
\[
\widetilde{\CF}_1(\rho)= \sum_{n=0}^4 k_n f_n(\rho),
\]
with
\begin{equation*}
\begin{aligned}
&f_0(\rho)={\frac {\rho}{ \left( {\rho}^{2}+1 \right) ^{2}}}, \qquad
f_1(\rho)={\frac {{\rho}^{2}}{ \left( {\rho}^{2}+1 \right) ^{2}}},\\
&f_2(\rho)={\frac {3\,{\rho}^{4
}+3\,{\rho}^{3}+{\rho}^{2}+3}{3 \left( {\rho}^{2}+1 \right) ^{2}}}+\frac {{\rho}^{4}-{\rho}^{2}+1}{ 2\left( {\rho}^{2}+1 \right) \rho}
L(\rho),\\
&f_3(\rho)=-{\frac {3\rho^2}{ 4\left( {\rho}^{2}+1 \right) ^{2}} }-\frac {3\rho}{ 8\left( {\rho}^{2}+1 \right) } L(\rho), \quad
f_4(\rho)={\frac {{\rho}^{5}}{ \left( {\rho}^{2}+1 \right) ^{2}}},
\end{aligned}
\end{equation*}
and $L$ is defined in \eqref{log}. Moreover, the parameters $k_n$ can be chosen arbitrarily. The first part of the statement follows because, in a neighborhood of the origin, $f_i(\rho)=\rho^{i+1}+O(\rho^{i+2}).$

Now, assume that $\Sigma=\{y=0\}.$ The procedure for this case is similar to the previous case. We only detail the differences. The functions $\CF_1$ and $\widetilde{\CF}_1$ are obtained from \eqref{eq:F1F2} and \eqref{eq:11}, but now for $\alpha=-\pi/2.$ Thus, after some algebraic manipulations we get
\[
\widetilde{\CF}_1(\rho)= \sum_{n=0}^5 k_n f_n(\rho),
\]
with
\begin{equation}\label{eqrho}
\begin{aligned}
f_0(\rho)=& \dfrac{\rho}{(\rho^2 + 1)^2},\quad f_1(\rho) = \dfrac{\rho^2}{(\rho^2 + 1)^2},\quad f_2(\rho) = \dfrac{\rho^3}{(\rho^2 + 1)^2},\vspace{0.2cm}\\ 
f_3(\rho) =&\frac{3\rho^2}{4(\rho^2 + 1)^2} -\dfrac{3\rho(\rho^2-1)^2}{8(\rho^2 + 1)^3}\phi\left(\rho,\frac{\pi}{2}\right),\quad f_4(\rho) = \dfrac{\rho^5}{(\rho^2 + 1)^2},\\
f_5(\rho) =&{\frac {525\,{\rho}^{4}-490\,{\rho}^{2}+525}{768\,
		\left( {\rho}^{2}+1 \right) ^{2}}}-{\frac { \left( 175\,{\rho}^{4}+70\,{\rho}^{2}+175 \right) \left( \rho^2-1 \right) ^{2}}{512\,\rho\, \left( {\rho}^
		{2}+1 \right) ^{3}}} \phi\left(\rho,\frac{\pi}{2}\right).
\end{aligned}
\end{equation}
Here, the function $\phi$ is defined as
\begin{equation*}
\phi(r,\theta)=\dfrac{1}{\sqrt{1-r^2}}\left(\theta -2 \arctan\left(\sqrt{\dfrac{1-r}{1+r}}\tan\left(\dfrac{\theta}{2}\right) \right)\right),
\end{equation*}
and the parameters $k_n,$ $n=0,1,\ldots,5,$ are arbitrary real numbers. The functions \eqref{eqrho} also write $f_i(\rho)=\rho^{i+1}+O(\rho^{i+2}).$ Consequently, the second part of the proof follows.

Finally, assume that $\Sigma=\{y+\sqrt{3}x=0\}.$ Again, the procedure for this case is similar to the previous cases and we shall only detail the differences. 
The functions $\CF_1$ and $\widetilde{\CF}_1$ are obtained from \eqref{eq:F1F2} and \eqref{eq:11}, but now for $\alpha=-\pi/3.$ Thus, after some algebraic manipulations we get
\[
\widetilde{\CF}_1(\rho)= \sum_{n=0}^7 k_n f_n, 
\]
with
\begin{equation}\label{eqmu}
\begin{aligned}
f_0(\rho) = &\frac{\rho}{\rho^2+1},\quad f_1(\rho) = \frac{\rho^2}{(\rho^2+1)^2},\quad f_2(\rho) = \frac{\rho^3}{(\rho^2+1)^2}, \quad f_4(\rho) = \frac{\rho^5}{(\rho^2+1)^2},\\
f_3(\rho) = &\frac {5(54733 \rho^4+94452 \rho^2+54733)}{6912 (\rho^2+1)^2}+\frac{15( 1366 \rho^4+1847 \rho^2+1366 )}{1024(\rho^2+1) \rho}\,\widetilde{L}(\rho)\\
	&+\frac {25 \sqrt{3}(236 \rho^4-247 \rho^2+236)(\rho^2-1)^2}{82944 \rho(\rho^2+1)^3}\,\widetilde{\phi}(\rho),\\
f_5(\rho) = &-{\frac {35(21835 \rho^4+40596 \rho^{2}+21835)}{6912 (\rho^{2}+1)^{2}}}-{\frac { 105( 550 \rho^4+797 \rho^{2}+550)}{1024(\rho^{2}+1)\rho}}\,\widetilde{L}(\rho)\\
	&-{\frac {175 \sqrt {3}(176 \rho^4-181 \rho^{2}+176) (\rho^2-1)^2}{82944 \rho ( \rho^{2}+1 ) ^3}}\,\widetilde{\phi}(\rho),\\
f_6(\rho) = &{\frac {245(227\rho^4+444 \rho^{2}+227)}{768 (\rho^{2}+1)^2}}+{\frac {315 ( 122 \rho^4+181 \rho^{2}+122 ) }{1024(\rho^{2}+1)\rho}}\,\widetilde{L}(\rho)\\
	&+{\frac {35 \sqrt {3}(116 \rho^4-115 \rho^{2}+116) (\rho^2-1)^2}{9216 \rho (\rho^2+1)^3}}\,\widetilde{\phi}(\rho),\\
f_7(\rho) = &-{\frac {385(77 \rho^4+156 \rho^{2}+77)}{2304 (\rho^{2}+1)^2}}-{\frac { 3465 ( 2 \rho^4+3 \rho^{2}+2 )}{1024(\rho^{2}+1)\rho}}\,\widetilde{L}(\rho)\\
	&-{\frac {385 \sqrt {3}(8\rho^4-7 \rho^{2}+8 ) (\rho^2-1)^2}{27648 \rho(\rho^{2}+1)^3}}\,\widetilde{\phi}(\rho).\\
\end{aligned}
\end{equation}
Here,
\[
\widetilde{L}(\rho)=\log \left( {\frac {\rho^2-{\rho}+1}{\rho^2+{\rho}+1}} \right),
\quad 
\widetilde{\phi}(\rho)=\phi\left(\dfrac{2\rho}{\rho^2+1},\dfrac{2\pi}{3}\right)-\phi\left(-\dfrac{2\rho}{\rho^2+1},\dfrac{2\pi}{3}\right).
\]
Analogously to the previous cases, $k_n,$ $n=0,1,\ldots,7,$ are arbitrary real numbers. Here, $f_i(\rho)=\rho^{i+1}+O(\rho^{i+2})$ for $i=0,1,\ldots,5,$ $f_{6}(\rho)=\rho^{8}+O(\rho^{9}),$ and $f_{7}(\rho)=\rho^{10}+O(\rho^{11}).$ Therefore, the ordered set of functions $[f_0,f_1,\ldots,f_7]$ is an ECT-system in a neighborhood of the origin. This completes the proof of the last case.
\end{proof}

Following the ideas of \cite{CruTor2018}, the previous local result can be numerically improved to a global one. Taking linear combinations of the functions \eqref{eqmu} one may try to get an ordered set of $8$ functions which is an ECT-system with accuracy (see, for instance, \cite{NovTor2017}).
For instance, it can be checked numerically that the ordered set $[f_0,f_1,f_2,f_3,f_4,f_6+f_7,f_7,f_5]$ has all Wronskians non-vanishing except $W_5, W_6$ which vanish once. From Theorem~\ref{thm:NovTor2017}, we conclude that $\CF$ has at most $9$ simple zeros. We shall see that a second order analysis allow us to overcome this number of limit cycles. 

The next result is a technical lemma describing the existence of a pseudo-Hopf bifurcation for $Z_{2,\e}.$ 

\begin{lem}\label{le:hopfzie}
Consider the piecewise vector fields $Z_{i,\e},$ $i=1,2,3,$ under the assumption
$P^{\pm}(0,0)=Q^{\pm}(0,0)=0.$ For all curve of discontinuity given by $\{h(x,y)=Ax+By=0\},$ there exists a constant perturbation such that a small limit cycle bifurcates from the origin in a pseudo-Hopf bifurcation. 
\end{lem}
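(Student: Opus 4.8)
The plan is to reduce the assertion to the normalized configuration of Proposition~\ref{pr:pseudoHopf}, where the discontinuity line is $\{y=0\}$, by means of a rotation, and then to apply that proposition verbatim.

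First I would introduce the rotation $R$ of the plane carrying the line $\{Ax+By=0\}$ onto $\{y=0\}$ (for instance the rotation by the angle $\alpha$ with $\tan\alpha=-A/B$, and $\alpha=\pi/2$ when $B=0$), choosing its orientation so that the zone $\{Ax+By>0\}$ is sent to $\{\bar y>0\}$. Being a linear orthogonal map fixing the origin, $R$ takes the piecewise field $Z_{i,\e}^{\pm}$ to a piecewise field $\bar Z_{i,\e}^{\pm}$ of the same type on the half-planes $\{\pm\bar y>0\}$; it preserves the equilibrium at the origin, its monodromic character, and its stability type. The decisive point for the conclusion is that $R$ maps constant vector fields to constant vector fields, so a constant perturbation of $\bar Z_{i,\e}^{+}$ pulls back under $R^{-1}$ to a constant perturbation of $Z_{i,\e}^{+}$ supported on the $+$ zone; hence it suffices to produce the required constant perturbation after rotating.

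Next I would check the hypotheses of Proposition~\ref{pr:pseudoHopf} for $\bar Z_{i,\e}^{\pm}$. The linear part of each center $S_i$ at the origin is the infinitesimal rotation $J=\left(\begin{smallmatrix}0&-1\\1&0\end{smallmatrix}\right)$, and since $R=\exp(\alpha J)$ commutes with $J$, the linear part of $\bar Z_{i,\e}^{+}$ at the origin is $J+\CO(\e)$. In particular $a=(\partial \bar Y^{+}/\partial x)|_{(0,0)}=1+\CO(\e)>0$ for $|\e|$ small, which is exactly the sign condition demanded by the proposition. The origin is monodromic because the unperturbed field is a center and the perturbation, satisfying $P^{\pm}(0,0)=Q^{\pm}(0,0)=0$, leaves the origin fixed; we may assume it is stable, the unstable case being symmetric. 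With these hypotheses in force, Proposition~\ref{pr:pseudoHopf} yields, for a suitable small constant $b$ with $ab>0$, a pseudo-Hopf bifurcation at $b=0$ and hence a small crossing limit cycle bifurcating from the origin. Undoing $R$ transports both the bifurcating cycle and the triggering constant perturbation back to the original coordinates, and since the argument used nothing specific about $A$, $B$, or $i$, it works uniformly for every line $\{Ax+By=0\}$ and every $i\in\{1,2,3\}$.

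I expect the only genuine obstacle to be conceptual rather than computational: Proposition~\ref{pr:pseudoHopf} is stated solely for the line $\{y=0\}$ together with the precise inequality $a>0$, so everything hinges on verifying that this sign condition is not destroyed by the reduction. This is guaranteed by the commutation $RJ=JR$, which forces the linearization at the origin to remain $J$ after the rotation; it is precisely this rotation-invariance of the common linear part of $S_1$, $S_2$, and $S_3$ that makes the statement hold for all three families and all discontinuity lines through the origin at once.
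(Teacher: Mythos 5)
Your proposal is correct and takes essentially the same route as the paper: the paper's proof simply observes that the assumption $P^{\pm}(0,0)=Q^{\pm}(0,0)=0$ keeps the origin monodromic and then invokes Proposition~\ref{pr:pseudoHopf} directly. Your explicit rotation argument (using that the rotation commutes with the common linear part $J$, hence preserves the sign condition $a>0$, and sends constant perturbations to constant perturbations) merely fills in the reduction from a general line $\{Ax+By=0\}$ to $\{y=0\}$ that the paper leaves implicit.
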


\begin{proof}
The unperturbed vector fields have a monodromic equilibrium point. This property remains under the assumption $P^{\pm}(0,0)=Q^{\pm}(0,0)=0.$ Then, the proof follows directly from Proposition~\ref{pr:pseudoHopf}.
\end{proof}

The conclusions on hyperbolic limit cycles of $Z_{2,\e}$ provided by Proposition~\ref{pr:S2O1} have assumed that $P^{\pm}(0,0)=Q^{\pm}(0,0)=0.$ So, from Lemma~\ref{le:hopfzie}, the parameters $P^{\pm}(0,0)$ and $Q^{\pm}(0,0)$ can be used to get a pseudo-Hopf bifurcation for $Z_{2,\e}^{\pm},$ which adds an extra limit cycle to each case of Proposition~\ref{pr:S2O1}. This is the content of the next result. It is worthwhile to say that this is the best result so far obtained after a first order analysis for piecewise quadratic system in two zones separated by a straight line.

\begin{cor}\label{co:Z2e}
For $|\e|>0$ sufficiently small, the maximum number of limit cycles that the system $Z_{2,\e}$ can have in any neighborhood of origin is at least $5,$ $6,$ and $8$ when the curve of discontinuity is $\{x=0\},$ $\{y=0\},$ and $\{y+\sqrt{3}x=0\},$ respectively.
\end{cor}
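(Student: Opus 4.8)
The plan is to combine the two preceding results in a modular fashion: Proposition~\ref{pr:S2O1} provides the bulk of the limit cycles through a first order averaging analysis carried out under the normalization $P^{\pm}(0,0)=Q^{\pm}(0,0)=0,$ and Lemma~\ref{le:hopfzie} provides one extra small-amplitude limit cycle through a pseudo-Hopf bifurcation obtained precisely by releasing those constant terms. For each of the three lines of discontinuity I would first invoke Proposition~\ref{pr:S2O1} to fix a choice of the perturbation parameters for which the first order averaged function $\widetilde{\CF}_1$ has, respectively, $4,$ $5,$ and $7$ simple positive zeros. By item~(i) of Theorem~\ref{thm:averagingF1F2}, each such simple zero yields, for every sufficiently small $\e\neq 0,$ a hyperbolic crossing limit cycle of $Z_{2,\e};$ these cycles form a nest around the origin and, as $\e\to 0,$ their radii converge to the fixed positive zeros of $\widetilde{\CF}_1.$ In particular, once the averaging parameters are fixed, there is a definite positive lower bound on the radii of all $m$ cycles (here $m=4,5,7$).

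Next I would keep $\e$ fixed and small, so that $Z_{2,\e}$ (still with $P^{\pm}(0,0)=Q^{\pm}(0,0)=0$) has a monodromic equilibrium at the origin together with the $m$ hyperbolic cycles just produced. This places us exactly in the setting of Lemma~\ref{le:hopfzie}, equivalently of Proposition~\ref{pr:pseudoHopf}: the transversality constant $a=(\partial Y^+/\partial x)|_{(0,0)}$ is nonzero, so the pseudo-Hopf is nondegenerate. I would then introduce the constant perturbation through $P^{\pm}(0,0),Q^{\pm}(0,0)$ governed by an independent parameter $b,$ chosen with $ab>0$ and \emph{subordinate to the already fixed} $\e.$ By Lemma~\ref{le:hopfzie} this creates one further crossing limit cycle issuing from the origin, whose amplitude is of order $b$ and hence tends to $0$ as $b\to 0.$

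The delicate point, and the main obstacle, is the coexistence of the two mechanisms: were the constant perturbation incorporated into the $\e$-expansion it would alter the first order averaged function at leading order, so the extra cycle cannot merely be appended to the $m$ zeros already arranged. This is resolved by treating $b$ as an independent parameter and by the order in which the limits are taken: $\e$ is fixed first, locking the $m$ averaging cycles as hyperbolic objects of a definite size, and only afterwards is $b$ driven to zero. For $b$ small relative to the now fixed $\e,$ hyperbolicity guarantees that the $m$ averaging cycles persist unchanged, while the pseudo-Hopf cycle, having amplitude of order $b,$ lies strictly inside the innermost averaging cycle and is therefore genuinely distinct. Choosing the sign of $b$ so that the stability of the newborn cycle matches the alternating stabilities of the nest prevents any collision. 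Summing the two contributions gives $4+1,$ $5+1,$ and $7+1$ limit cycles in a neighborhood of the origin for the lines $\{x=0\},$ $\{y=0\},$ and $\{y+\sqrt{3}\,x=0\},$ respectively, which is the asserted lower bound $5,$ $6,$ $8.$
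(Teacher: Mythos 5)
Your proposal is correct and follows essentially the same route as the paper: the paper obtains the corollary precisely by combining the $4,$ $5,$ and $7$ cycles of Proposition~\ref{pr:S2O1} with the extra cycle from the pseudo-Hopf bifurcation of Lemma~\ref{le:hopfzie}, using the freed parameters $P^{\pm}(0,0),Q^{\pm}(0,0).$ Your explicit discussion of the order of limits (fixing $\e$ first so the hyperbolic averaging cycles persist, then taking $b$ small relative to $\e$ so the new cycle is of amplitude $O(b)$ and distinct) makes rigorous what the paper leaves implicit.
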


\section{Second order perturbation}\label{se:4}

In this section, in order to extend the previous results, we perform a second order analysis on piecewise quadratic perturbations of quadratic isochronous centers from the families $S_1,$ $S_2,$ and $S_3$ (see \eqref{eq:10}). More specifically, we shall apply the averaging method of second order to study the limit cycles of $Z_{i,\e},$ $i=1,2,3.$ Due to the difficulties in the massive second order computations, we only perform a local study. Despite this, we shall get the best lower bounds so far for the maximum number of limit cycles of $Z_{i,\e},$ $i=1,2,3,$ which are $11,$ $16,$ and $10,$ respectively. This proves our main result, Theorem~\ref{thm:Main}.

In Propositions~\ref{pr:S1O2} and \ref{pr:S3O2}, we provide conditions such that the second averaged functions associated to $Z_{1,\e}$ and $Z_{3,\e}$ are linear with respect to the parameters and have the highest possible rank. Under these conditions the origin is a zero of maximal finite multiplicity for $\CF_2.$ Moreover, we shall see that $\CF_2$ satisfies the versal unfolding property at the origin guaranteeing then the existence of the highest possible number of simple zeros near the origin and, consequently, limit cycles for $Z_{1,\e}$ and $Z_{3,\e}.$ The second order analysis for centers of the family $S_2$ is much more difficult and the procedure used for the families $S_1$ and $S_3$ cannot be followed straightly for $S_2$. In this case, some computer assisted analyses will be needed in order to use the Poincar\'{e}--Miranda theorem, that is Theorem~\ref{thm:PoincareMiranda}, to obtain analytically the existence of 16 limit cycles of $Z_{2,\e}$ bifurcating from the origin. This is the content of Proposition~\ref{pr:S2O2}.

\begin{prop}\label{pr:S1O2}
For $|\e|>0$ sufficiently small, the maximum number of limit cycles that $Z_{1,\e}$ can have in any neighborhood of the origin is at least 11 when the curve of discontinuity is $\{x=0\}.$
\end{prop}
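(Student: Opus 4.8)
The plan is to invoke part (ii) of Theorem~\ref{thm:averagingF1F2}: once the first averaged function vanishes identically, the bifurcation of limit cycles from the origin is governed by the second averaged function $\CF_2$. The first step is therefore to impose $\CF_1\equiv0$. By \eqref{mels1} this amounts to setting $k_0=\cdots=k_5=0$, which through the change of parameters \eqref{eq:pqs1} fixes six of the perturbation coefficients of $Z_{1,\e}^{\pm}$ in terms of the remaining ones, leaving the coefficients that do not occur in \eqref{eq:pqs1} entirely free. This is precisely the hypothesis $\CF_1=0$ required in Theorem~\ref{thm:averagingF1F2}(ii).

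Next I would compute $\CF_2$ from \eqref{eq:F1F2}. The first integral there is of the same nature as the one already handled in Proposition~\ref{pr:S1O1}: after the birational linearization \eqref{ls1}, the passage to polar coordinates, and the two substitutions $r=(-1+\sqrt{1-R^2})/R$ and $r=(1-\sqrt{1-R^2})/R$, the functions $F_2^{\pm}$ become rational in $(\cos\T,\sin\T,R)$ with denominators in standard form, so the integral tables \eqref{eq:11} of Section~\ref{se:5} apply verbatim. The genuinely new object is the second integral in \eqref{eq:F1F2}, which requires the antiderivatives $r_1^{\pm}$ of \eqref{r1}; these are obtained by integrating $F_1^{\pm}$ in $\T$, after which the products $\p_r F_1^{\pm}\cdot r_1^{\pm}$ are integrated once more with the same tables. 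Carrying this out and recovering the original variable through $R=\mp 2r/(r^2+1)$, one obtains $\CF_2(r)$ as a combination of powers of $r$, of $1/r$, and of the logarithm $L(r)$ of \eqref{log}, with coefficients that are polynomial in the free parameters.

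The third, structural step is to introduce a further change of parameters, analogous to \eqref{eq:pqs1}, absorbing the nonlinear dependence so that $\CF_2$ becomes \emph{linear} in a new family of arbitrarily choosable parameters. Taylor-expanding $\CF_2(r)=\sum_{j\ge0} v_j\,r^{\,j}$ around $r=0$, the goal is to show that the eleven lowest coefficients $v_0,\dots,v_{10}$ can be made to vanish simultaneously while the coefficient of the next power stays nonzero, and that the map sending the free parameters to $(v_0,\dots,v_{10})$ has rank $11$. This exhibits the origin as a zero of $\CF_2$ of maximal finite multiplicity $11$ whose unfolding by the free parameters is versal.

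Finally, the versal unfolding property allows the eleven independent coefficients to be assigned prescribed small values with alternating signs and rapidly decreasing magnitudes; by the usual Rolle/Descartes argument the multiplicity-$11$ zero then splits into eleven simple zeros of $\CF_2$ inside an arbitrarily small right-neighborhood of $r=0$. Each such simple zero yields, via Theorem~\ref{thm:averagingF1F2}(ii), a crossing limit cycle of $Z_{1,\e}$ for $|\e|>0$ sufficiently small, and all of them lie in an arbitrarily small neighborhood of the origin, giving the asserted eleven. The main obstacle is the third step: the symbolic evaluation of $\CF_2$—especially the cross term built from $r_1^{\pm}$—is extremely heavy, and the crux is verifying that the associated $11\times m$ coefficient matrix (with $m$ the number of free parameters) actually attains rank $11$; this rank verification, rather than any conceptual difficulty, is where the real work lies.
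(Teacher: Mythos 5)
Your overall skeleton (kill $\CF_1$ via \eqref{eq:pqs1}, compute $\CF_2$ with the integral tables, Taylor-expand at $r=0$, and unfold independent coefficients into simple zeros) matches the paper's, but the final counting contains a genuine gap, and it sits precisely at the step you defer as ``where the real work lies.'' The computation does not deliver the structure you hope for: under the hypothesis $P^{\pm}(0,0)=Q^{\pm}(0,0)=0$ (which is needed for $\CF_1,\CF_2$ to be well defined at the origin, and which you never impose) one has $\CF_2(r)=O(r)$ identically, so $v_0\equiv 0$; and the map from the free parameters to the Taylor coefficients of $r,\dots,r^{14}$ has rank exactly $11$, with the independent directions spread over the powers $r^1,\dots,r^8,r^{10},r^{12},r^{14}$ --- not over eleven consecutive powers with a nonzero twelfth. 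Indeed there are resonance relations (in the paper's notation, $g_8$, $g_{10}$, $g_{12}$ are all proportional to the single parameter $c_1$, while $g_6$, $g_{11}$, $g_{13}$ are linear combinations of the remaining ones), so you cannot annihilate $v_0,\dots,v_{10}$ while keeping $v_{11}\neq 0$ and then invoke a versal rank-$11$ unfolding of a multiplicity-$11$ zero. With $11$ independently controllable coefficients in a function that vanishes at $r=0$, the alternating-signs/Rolle argument yields at most $10$ simple zeros in $r>0$ --- one fewer than the statement requires.

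The missing idea is the pseudo-Hopf bifurcation. The paper first sets the constant terms $P^{\pm}(0,0)=Q^{\pm}(0,0)=0$, obtains $10$ limit cycles from second-order averaging essentially as you describe, and then turns the constant terms back on: by Lemma~\ref{le:hopfzie} (a consequence of Proposition~\ref{pr:pseudoHopf}), a small constant perturbation changes the stability of the sliding segment and creates one additional small crossing limit cycle, giving $10+1=11$. Without this extra mechanism your argument, even after the heavy rank computation is carried out correctly, proves only $10$ limit cycles. A secondary inaccuracy: $\CF_2$ is genuinely quadratic in the first-order parameters, and its closed form \eqref{eq2oo} involves not only $L(r)$ but also the non-elementary function $\Phi_0^{0}$ and a second logarithm, so the cross term requires the special integrals ${\{s,c\}}^{\lambda}_{k,\ell}$, ${\{s,c\}}^{\phi}_{k,\ell}$, ${\{s,c\}}^{\T}_{k,\ell}$ rather than the tables ``verbatim''; moreover the paper removes the quadratic dependence not by a parameter change that linearizes everything, but by fixing certain parameters to explicit values and solving a system with unique solution. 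That part of your plan is repairable; the counting is not.
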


\begin{proof} 
Assume that $P^{\pm}(0,0)=Q^{\pm}(0,0)=0.$ Under such condition, as in Proposition~\ref{pr:pseudoHopf} or Lemma~\ref{le:hopfzie}, an extra limit cycle can always be obtained from a pseudo-Hopf bifurcation. So, the rest of the proof consists in applying the second order averaging method, Theorem~\ref{thm:averagingF1F2}(ii), to get at least 10 limit cycles bifurcating from the origin. We notice that such special condition on the perturbation will guarantee that the averaged functions $\CF_1$ and $\CF_2$ are well defined at the origin. 

The proof is structured in two parts. Firstly, we provide the expression of the function $\CF_2(r).$ Secondly, we study the Taylor series of $\CF_2$ around $r=0$ in order to obtain the highest number of independent monomials.

\medskip

The first part will follow the same steps as in the proof of Proposition~\ref{pr:S1O1}. In fact, the function $\CF_1$ is given by \eqref{mels1}. Then, imposing conditions such that $\CF_1\equiv 0,$ that is $k_0=\cdots=k_5=0$ in \eqref{eq:pqs1}, we compute the second averaged function $\CF_2$ from \eqref{eq:F1F2} for $\al=0$. Proceeding with the changes of variables \eqref{ls1} and \eqref{ls1inverse} the denominators of the functions $F^{\pm}_1$ and $F^{\pm}_2$ write $\left(1+R\cos\psi\right)^2.$ Hence, the integrals
\begin{equation*}\label{int3}
\begin{aligned}
r^{+}_1(\T,R)&=\int_{0}^{\T}F_1^+(\T,(-1+\sqrt{1-R^2})/R)d\T,\\
r^{-}_1(\T-\pi,R)&=\int_{0}^{\T}F^{-}_1(\T-\pi,(1-\sqrt{1-R^2})/R)d\T,
\end{aligned}
\end{equation*}
can be computed using the expressions $\mathcal{\{S,C\}}_{k,\ell}^{\alpha=0}$ (see \eqref{eq:11}) for $\ell=2,$ and $k\in\{0,1,2,3\}.$ We notice that $\CF_2=\CF_2^{(1)}+\CF_2^{(2)}$ in \eqref{eq:F1F2} has two summands. The first one, which depends linearly on the parameters of second order terms ($p^\pm_{2,i,j},q^\pm_{2,i,j}$ in $F_2^\pm$), has the same form as $\CF_1.$ Indeed, changing the first index 1 to 2 of all the parameters $p^\pm_{1,i,j},q^\pm_{1,i,j}$ in \eqref{eq:pqs1} we see that $\CF_2^{(1)}$ becomes $\CF_1.$ Consequently, $\CF_2^{(1)}$ writes as \eqref{mels1} for some new parameters $k_0,k_1,\ldots,k_5.$ The second summand, which depends quadratically on the remaining parameters of first order terms ($p^\pm_{1,i,j},q^\pm_{1,i,j}$ in $F_1$), can also be obtained using the integrals from Section~\ref{se:5}. Indeed, in order to get $\CF_2^{(2)}$ the integrals
\begin{equation*}\label{int5}
\begin{aligned}
\mathcal{G}^+(R)&=\int_{0}^{\pi}\left(\dfrac{\p}{\p R}F_1^+(\T,(-1+\sqrt{1-R^2})/R)r_1^+(\T,R)\right)d\T,\\
\mathcal{G}^{-}(R)&=\int_{0}^{\pi}\left(\dfrac{\p}{\p R}F_1^-(\T-\pi,(1-\sqrt{1-R^2})/R)r_1^-(\T-\pi,R)\right)d\T,
\end{aligned}
\end{equation*}
can be computed using the expressions ${\{s,c\}}^{\lambda}_{k,\ell},$ ${\{s,c\}}^{\phi}_{k,\ell},$ ${\{s,c\}}^{\T}_{k,\ell},$ ${\{s,c\}}_{k,\ell}^{\alpha=0},$ for $\ell=3$ with $k\in\{0,1,2,3,4\},$ and ${\{s,c\}}_{k,\ell}^{\alpha=0}$ for $\ell=4$ with $k\in\{0,1,2,3,4,5,6\}.$ Finally, taking $R=-2r/(1+r^2)$ and $R=2r/(1+r^2)$ in $\mathcal{G}^+(R)$ and $\mathcal{G}^-(R),$ respectively, we get back the original variable $r$. Hence, the second averaged function writes
\begin{equation}\label{eq2o}
\CF_2(r)=\CF_2^{(1)}(r)+\CF_2^{(2)}(r)=\CF_2^{(1)}(r)+\mathcal{G}^+(-2r/(1+r^2))+\mathcal{G}^-(2r/(1+r^2)).
\end{equation}
Now, from Lemma~\ref{lem 5.1}, expression \eqref{mels1}, and applying the change of parameters
\[
\begin{aligned}
c_0=& p^-_{{1,1,1}}+p^+_{{1,1,1}}+2\,q^-_{{1,2,0}}+2\,q^+_{{1,2,0}},\\
c_1=& 2\,p^+_{{1,0,1}}-2\,p^+_{{1,0,2}}-2\,p^+_{{1,2,0}}+2\,q
^-_{{1,1,0}}-q^-_{{1,1,1}}+q^+_{{1,1,1}},\\
c_2=&p^+_{{1,1,0}}+q^+_{{1,0,1}},\\
c_3=&-2\,p^+_{{1,1,0}}+2\,q^+_{{1,0,1}}-2\,q^+_{{1,0,2}}-2\, q^+_{{1,2,0}},
\end{aligned}
\]
the second averaged function \eqref{eq2o} writes
\begin{equation}\label{eq2oo}
\begin{aligned}
\CF_2(r)=&\frac{H_0(r)}{r^2}+ \frac{(1-r^2)H_1(r)}{r^3} \log\left(\frac{1-r}{1+r}\right)\\
& +H_2(r) \log\left(\frac{(1-r^2)^2(1+r^2)}{(1+6r^2+{r}^{4})^3}\right)+ H_3(r)\Phi_0^{0}\left(\frac{2r}{r^2+1}\right),
\end{aligned}
\end{equation}
where $\Phi_0^{0}$ is defined in \eqref{eq:12a}, $H_0(r)$ and $H_1(r)$ are polynomials of degrees 7 and 6, respectively, satisfying 
\begin{equation}\label{eq:hh01}
H_0(0)=2H_1(0), \quad H'_0(0)=2H'_1(0), \quad H''_0(0)=-\frac{8}{3}H_1(0)+2 H''_1(0).
\end{equation}
Moreover, $H_0$ and $H_1$ depend quadratically on the parameters $c_i, p_{1,i,j}^\pm, q_{1,i,j}^\pm$ and linearly on the new parameters $k_i.$ The other two functions write
\[
\begin{aligned}
H_2(r)&=-\frac { \left( r^2-1 \right) ^{2} \pi }{16r}\,c_0 \,c_1 ,\\
H_3(r)&= \frac { r^2-1 }{8
	r \left( {r}^{2}+1 \right) } \, c_0\, ( c_3\,{r}^{4}-2\, c_3\,{r}^{2}-2\, c_2).
\end{aligned}
\]
The above conditions imply that $\CF_2(r)=O(r).$ This concludes the first part of this proof.

\medskip
 
Now, we compute the Taylor series of $\CF_2$ given in \eqref{eq2oo} around $r=0$. So,
\begin{equation*}\label{eq:13}
\CF_2(r)=\sum_{i=0}^{n}f_{i}r^{i+1}+O(r^{n+2}).
\end{equation*}
Here, the coefficients $f_i$ are quadratic functions in the variables $c_i, p_{1,i,j}^\pm, q_{1,i,j}^\pm$ and linear in the variables $k_i, p^+_{1,0,1}, q^+_{1,0,2},$ $q^-_{1,2,0}\}.$ Assuming that $c_0=1,$ $2c_{2}+ c_{3}-2{ p}^+_{{1,1,1}}-4{\it q}^-_{{1,0,2}}-4q^+_{{1,0,2}}=1,$ $116c_{2}+58c_{3}-116{p}^+_{{1,1,1}}-230{ q}^-_{{1,2,0}}-230{q}^+_{{1,2,0}}-59=1,$ $-2c_{2}-c_{3}+2{p}^+_{{1,1,1}}+5=1,$ the system of equations
\begin{equation*}
\{f_0=c_4,\, f_1=c_5,\, f_2=c_6, \, f_3=c_7, f_4=c_9, \, f_5=c_8, \, f_7=c_{10}, \, f_9=c_{11}\}
\end{equation*}
has a unique solution. Accordingly, all the perturbation parameters depend only on $\{c_1,\ldots,c_{11}\}.$
So, the second order averaged function writes
\begin{equation*}\label{eq:12abc}
\CF_2(r)=\sum_{i=0}^{13}g_{i}r^{i+1}+O(r^{15}),
\end{equation*} 
with $g_{{0}} = c_4,$ $g_{{1}}=c_5,$ $g_{{2}} = c_6,$ $g_{{3}}=c_7,$ $g_{{4}} = c_9, $ $g_{{5}}=c_8,$ $g_{{7}} = c_{10},$ $ g_{{9}} = c_{11}$
\[
\begin{array}{rl}
g_{{6}} = &3517699860675\pi c_1 + c_4 - c_6 + c_9, \vspace{0.2cm}\\
g_{{8}} = &-12593243758095\pi c_1,\vspace{0.2cm}\\
g_{{10}} = &61075412843445\pi c_1,\vspace{0.2cm}\\
g_{{11}} =&- 786432c_2 +\dfrac{63045632}{3}c_{3} + \dfrac{3632}{1287}c_5- {\dfrac{3632}{1287}{c_7}}+ \dfrac{1211}{429}c_8 - {\dfrac{109}{39}c_{10}}+ \dfrac{7}{3}\,c_{11}, \vspace{0.2cm} \\
g_{{12}}=& -304692133550805\pi c_1,\vspace{0.2cm}\\
g_{{13}} =& - 2484794504\, c_2 + 16745167364\, c_3 + \dfrac{7088319}{36608}c_5 - \dfrac{7088319}{36608}c_7\vspace{0.2cm}\\
& 
+ \dfrac{56833457}{292864}c_8 -\dfrac{1230915}{6656}c_{10} + \dfrac{209569}{2048}c_{11}.
\end{array} 
\]
Notice that $(g_0,g_1,\ldots,g_{13})$ is a linear function on the variable $(c_1,\ldots,c_{11})$. Since its rank with respect to $(c_1,\ldots,c_{11})$ is $11$, there exists a change of variables $(c_1,\ldots,c_{11})\mapsto(d_1,\ldots,d_{11})$ such that 
\begin{equation*}
\CF_2(r)=r(\sum_{i=1}^{11}d_{i}U_{a_i}(r)+O(r^{14})),
\end{equation*} 
where $U_{a_i}(r)=r^{a_i}+O(r^{14}),$ $a_i=i-1,$ $i=1,2,\ldots,8,$ $a_9=9$, $a_{10}=11,$ and $a_{11}=13.$ Since $\CF_2$ is analytic at $r=0,$ the Weierstrass Preparation Theorem (see, for instance, \cite{ChoHal1982}) implies that there exists an analytic function $F$ such that $F(0)\ne 0$ and
\begin{equation*}
\CF_2(r)=r F(r) \sum_{i=1}^{11}d_{i}r^{a_i},
\end{equation*} 
This proof follows by noticing that the parameters $d_i$, $i=1,2,\ldots,11,$ can be chosen (small) in order that the function $\CF_2(r)$ has 10 simple zeros near the origin.
\end{proof}

\begin{rem}\label{re:dificilo2}
There are two main difficulties in studying the maximum number of simple zeros of \eqref{eq2oo}. First, $\Phi_0^{0}$ is an integral function that cannot be expressed with simple functions. Second, the parameter coefficients of the polynomials $H_0$ and $H_1$ have a quadratic dependence on the parameters of $Z_{1,\e}$, consequently the ECT-systems theory cannot be directly applied.
\end{rem}

Similar difficulties as pointed out by Remark~\ref{re:dificilo2} will also appear in the next two propositions.

\begin{prop}\label{pr:S3O2}
For $|\e|>0$ sufficiently small, the maximum number of limit cycles that $Z_{3,\e}$ can have in any neighborhood of the origin is at least 10 when the curve of discontinuity is $\{x=0\}.$
\end{prop}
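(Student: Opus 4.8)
The plan is to mirror, step by step, the strategy already carried out for $Z_{1,\e}$ in Proposition~\ref{pr:S1O2}, adapting it to the birational linearization and the averaged functions of the family $S_3$ that were assembled in Proposition~\ref{pr:S3O1}. As before, I would first note that, under the assumption $P^{\pm}(0,0)=Q^{\pm}(0,0)=0$, one extra limit cycle is always available from a pseudo-Hopf bifurcation by Lemma~\ref{le:hopfzie}. Thus it suffices to produce $9$ simple zeros of the second averaged function $\CF_2$ near the origin. The same hypothesis guarantees that both $\CF_1$ and $\CF_2$ are well defined at $r=0$.

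First I would impose $\CF_1\equiv 0$, that is $k_0=\cdots=k_5=0$ in the expression \eqref{mels3} obtained in Proposition~\ref{pr:S3O1}, so that Theorem~\ref{thm:averagingF1F2}(ii) applies. Then I would compute $\CF_2$ from \eqref{eq:F1F2} for $\al=0$, reusing the linearization \eqref{2s1} and the substitutions $r=\mp R/8$ that reduce the denominators of $F_1^{\pm}$ and $F_2^{\pm}$ to the standard form $(R\cos\T+1)^4$. As in \eqref{eq2o}, I expect $\CF_2=\CF_2^{(1)}+\CF_2^{(2)}$, where $\CF_2^{(1)}$ is linear in the second-order parameters and, after the obvious reindexing, has exactly the shape of $\CF_1$ in \eqref{mels3}; while $\CF_2^{(2)}$ collects the quadratic contributions of the first-order parameters through the integrals $\mathcal G^{\pm}$ of the type $\p_R F_1^{\pm}\cdot r_1^{\pm}$, computable with the formulas of Section~\ref{se:5} for $\ell\in\{3,4\}$.

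Next I would expand $\CF_2$ in a Taylor series around $r=0$, writing $\CF_2(r)=\sum_i f_i r^{i+1}+O(r^{n+2})$, where each $f_i$ is quadratic in the first-order parameters and linear in the second-order ones. Following the versal-unfolding argument of Proposition~\ref{pr:S1O2}, I would fix a handful of normalizing conditions and then solve a triangular linear system expressing successive Taylor coefficients as independent free parameters, so that the linear map from the parameters to the coefficient vector $(g_0,g_1,\ldots)$ attains rank $10$. A change of variables then brings $\CF_2$ to the form $r\bigl(\sum_{i=1}^{10} d_i\,U_{a_i}(r)+O(r^{N})\bigr)$ with $U_{a_i}(r)=r^{a_i}+\cdots$ and suitably spaced exponents $a_i$; the Weierstrass Preparation Theorem then yields $\CF_2(r)=rF(r)\sum_{i=1}^{10} d_i r^{a_i}$ with $F(0)\neq 0$, and choosing the $d_i$ small and with alternating signs forces $9$ simple zeros near the origin. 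Adding the pseudo-Hopf cycle gives the claimed $10$.

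The main obstacle, exactly as flagged in Remark~\ref{re:dificilo2}, is that the quadratic dependence of the coefficients $f_i$ on the first-order parameters prevents a direct application of the ECT-system machinery, and the presence of an integral function (the analogue of $\Phi_0^{0}$, here built from $L(\rho)$ and the Chebyshev-type primitives for $\ell=4$) means the series must be handled symbolically rather than in closed form. Consequently the hard part will be verifying that the rank of the parameter-to-coefficient map is genuinely $10$ and not less: this requires the massive but routine symbolic computation of enough Taylor coefficients and the explicit solution of the resulting linear system, after which the spacing of the surviving exponents $a_i$ must be checked to guarantee that the $d_i$ are truly independent and that a sign choice realizes all $9$ simple zeros.
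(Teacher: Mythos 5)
Your proposal follows essentially the same route as the paper's proof: one extra cycle from the pseudo-Hopf bifurcation under $P^{\pm}(0,0)=Q^{\pm}(0,0)=0$, then second-order averaging with the substitutions $r=\mp R/8$, a Taylor expansion of $\CF_2$ whose first five coefficients are freed by the (reindexed) second-order parameters and five more by solving for the quadratic first-order parameters after normalizing conditions, giving rank $10$, hence $9$ simple zeros near the origin and the claimed total of $10$. The only notable deviations are computational details: for $S_3$ no $\Phi_0^{0}$-type integral function actually appears --- the paper obtains $\CF_2(r)=H_0(r)/(r^2(1-r^4)^2)+H_1(r)\log\left(\frac{1-r}{1+r}\right)/(r^3(1+r^2))$ with $H_0,H_1$ polynomials of degrees $13$ and $8$, so the expansion is more elementary than you anticipate, the integral formulas are needed for $\ell=4,5,8$ rather than $\ell\in\{3,4\}$, and the surviving exponents turn out to be the consecutive powers $r,\ldots,r^{10}$ (with $d_{11},d_{12}$ linearly dependent on $d_1,\ldots,d_{10}$, so the multiplicity cannot be pushed further).
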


\begin{proof} 
The proof follows the same steps as the proof of Proposition~\ref{pr:S1O2}. We recall that the first order analysis has been performed in Proposition~\ref{pr:S3O1}.
Again, an extra limit cycle can be obtained from a pseudo-Hopf bifurcation, so we may assume that $P^{\pm}(0,0)=Q^{\pm}(0,0)=0.$ Then, the proof will consists in applying the second order averaging method to get at least 9 limit cycles bifurcating from the origin.

Firstly, using \eqref{r1} for $\al=0$, the functions $r^{+}_1(\T,R)$ and $r^{-}_1(\T-\pi,R)$ write
\begin{equation*}
\begin{aligned}
	r^{+}_1(\T,R)&=\int_{0}^{\T}F_1^+(\T,-R/8)d\T,\\
	r^{-}_1(\T-\pi,R)&=\int_{0}^{\T}F^{-}_1(\T-\pi,R/8)d\T.
\end{aligned}
\end{equation*}	
The above integrals can be computed using the expressions from Section~\ref{se:5}, $\mathcal{\{S,C\}}^{\alpha=0}_{k,\ell}$ for $\ell=4,$ and $k=0,1,\ldots,6$. Then, the second summand of the second averaged function, $\CF_2^{(2)},$ follows from the integrals
\begin{equation*}
\begin{aligned}
\mathcal{G}^+(R)&=\int_{0}^{\pi}\left(\dfrac{\p}{\p R}F_1^+(\T,-R/8)r_1^+(\T,R)\right)d\T,\\
\mathcal{G}^{-}(R)&=\int_{0}^{\pi}\left(\dfrac{\p}{\p R}F_1^-(\T-\pi,R/8)r_1^-(\T-\pi,R)\right)d\T,
\end{aligned}
\end{equation*}
which can be computed using the expressions from Section~\ref{se:5}, ${\{s,c\}}^{\lambda}_{k,\ell},$ ${\{s,c\}}^{\phi}_{k,\ell},$ ${\{s,c\}}^{\T}_{k,\ell},$ ${\{s,c\}}_{k,\ell}^{\alpha=0}$ for $\ell=5$ with $k=0,1,\ldots,7,$ and ${\{s,c\}}_{k,\ell}^{\alpha=0}$ with $\ell=8$ and $k=0,1,\ldots,12.$ So, going back to the original variable $r$ we get
\begin{equation*}
\CF_2(r)=\CF_2^{(1)}(r)+\CF_2^{(2)}(r)=\CF_2^{(1)}(r)+\mathcal{G}^+(-8r)+\mathcal{G}^-(8r).
\end{equation*}
Again, from Lemma~\ref{lem 5.1} and expression \eqref{mels3} we get
\begin{equation*}
\begin{aligned}
\CF_2(r)= &\frac{H_0(r)}{r^2(1-r^4)^2}+ \frac{H_1(r)}{r^3(1+r^2)} \log\left(\frac{1-r}{1+r}\right).
\end{aligned}
\end{equation*}
Here, $H_0$ and $H_1$ are polynomials of degree 13 and 8, respectively, and satisfy the relations \eqref{eq:hh01}. As previously, with these conditions, we have $\CF_2(r)=O(r).$

\medskip

Now, computing the Taylor series of $\CF_2$ around $r=0$ we get
\begin{equation}\label{eq:15}
\CF_2(r)=\sum_{i=1}^{n}f_{i}r^{i}+O(r^{n+1}).
\end{equation}
In order to simplify the expression of $\CF_2(r)$, we introduce the new parameters,
\[
\begin{array}{rl}
c_1=& 16\,q^-_{1,1,0}+3\,q^-_{1,1,1}-16\,q^+_{1,1,0}-3\,q^+_{1,1,1},\vspace{0.2cm}\\
c_2=&-16\,q^-_{1,1,0}+4\,p^+_{1,0,2}-3\,q^-_{1,1,1}+16\,q^+_{1,1,0}+3\,q^+_{1,1,1},\vspace{0.2cm}\\
c_3=&-48\,q^-_{1,1,0}+12\,p^+_{1,0,2}+8\,p^+_{1,2,0}-9\,q^-_{1,1,1}+48\,q^+_{1,1,0}+7\,q^+_{1,1,1},\vspace{0.2cm}\\
c_4=& 184\,p^+_{1,1,1}+21\,q^-_{1,0,2}+71\,q^+_{1,0,2},\vspace{0.2cm}\\
c_5=&q^-_{1,0,2}+q^+_{1,0,2},\vspace{0.2cm}\\
c_6=&q^-_{1,2,0}+q^+_{1,2,0},\vspace{0.2cm}\\
c_7=& 200\,p^+_{1,1,0}+579\,p^+_{1,1,1}-825\,q^-_{1,2,0}-1300\,q^+_{1,0,1},\vspace{0.2cm}\\
c_8=& 16\,p^-_{1,0,1}-16\,p^+_{1,0,1}-3\,q^-_{1,1,1}+3\,q^+_{1,1,1}, \vspace{0.2cm}\\
c_9=& 16\,p^+_{1,0,1}-3\,q^+_{1,1,1},\vspace{0.2cm}\\
c_{10}=&q^+_{1,1,0}+p^+_{1,0,1},\vspace{0.2cm}\\
c_{11}=& 800\,p^+_{1,1,0}-1413\,p^+_{1,1,1}-800\,q^+_{1,0,1},\vspace{0.2cm}\\
c_{12}=& -\dfrac{789}{6125}p^+_{1,1,1}-\dfrac{1}{24500}
c_4+\dfrac{71}{24500}c_5+\dfrac{184}{18375}c_6+\dfrac{4}{67375}c_7-\dfrac{2}{28875}c_{11}.
\end{array}
\]
We see that the coefficients $f_i$ in \eqref{eq:15} depend linearly on the second order parameters, $p^\pm_{2,i,j}$ and $q^\pm_{2,i,j},$ and quadratically on the new parameters $c_i.$ Under the assumption $P^{\pm}(0,0)=Q^{\pm}(0,0)=0,$ the first averaged function, studied in Proposition~\ref{pr:S3O1}, provides only the first 5 linearly independent coefficients. Thus, there exists a transformation on the parameters space such that $f_i=d_i$ for $i=1,\ldots,5,$ and, under the condition $c_1=1,$ $c_5=c_6=c_8=c_{10}=0,$ the system 
\[
\{f_{6}=d_{6},f_{7}=d_{7},f_{8}=d_{8},f_{9}=d_{9},f_{10}=d_{10}\}
\]
has a unique solution with respect to $\{c_2,c_3,c_4,c_7,c_9\}.$
Therefore, we get 
\begin{equation*}\label{eq:f2o2s2}
\CF_2(r)=\sum_{i=1}^{10} d_ir^{i}+O(r^{11}).
\end{equation*}
In this case, the multiplicity of the origin cannot be increased because the coefficients $d_{11}$ and $d_{12}$ depend linearly on the parameters $\{d_1,\ldots,d_{10}\}.$ In fact, 
\[
\begin{aligned}
d_{11}&=d_5+d_7-d_9\\
d_{12}&=-\frac {76}{715}d_2-\frac {37}{65}d_4+\frac {502}{715}d_6+\frac {116}{65}d_8-\frac{2}{5}d_{10}.
\end{aligned}
\]
Finally, this proof follows by noticing that the parameters $d_i$, $i=1,2,\ldots,9,$ can be chosen (small) in order that the function $\CF_2(r)$ has 9 simple zeros near the origin.
\end{proof}

Next technical results, whose proofs are straightforward, provide lower and upper bounds for the values that a polynomial of $n$ variables take in a $n$ dimensional polyhedron. Moreover, they will be useful for proving the last proposition of this section concerning the second order analysis of the system $Z_{2,\e}$.

\begin{lem}\label{le:sigmachi}
Consider $h>0,$ $p>0,$ $q$ real numbers such that $p\in[\underline{p},\overline{p}],$ with $\underline{p}\overline{p}>0,$ and $q\in[\underline{q},\overline{q}],$ with $\underline{q}\overline{q}>0.$ 
\begin{enumerate}[(i)]
\item Then, $\sigma^\ell(q,p)\leq qp\leq \sigma^r(q,p),$ \\[5pt]
where $\sigma^\ell(q,p)=\begin{cases} q \, \underline{p}, & \text{if } q>0, \\ q \, \overline{p} , & \text{if } q<0, \end{cases}$ and $\sigma^r(q,p)=\begin{cases} q \, \overline{p}, & \text{if } q>0, \\ q \, \underline{p}, & \text{if } q<0. \end{cases}$\\[2pt]
\item If $u_j\in[-h,h],$ for $j=1,\ldots,n,$ and denoting $u^i=u_1^{i_1}u_2^{i_2}\cdots u_n^{i_n}$ for $i=(i_1,\ldots,i_n)\ne 0,$ we have $\chi^\ell(q,u^i)\leq q u^i\leq \chi^r(q,u^i),$ where \\[5pt]
$\chi^\ell(q,u^i)=\begin{cases} 0, & \text{if } q>0 \text{ and }i_k\text{ even for all } k=1,\ldots,n, \\ -\overline{q} \, h^{i_1+\cdots+i_n} , & \text{if } q>0 \text{ and } i_k \text{ odd for some } k=1,\ldots, n,\\ \underline{q} \, h^{i_1+\cdots +i_n} , & \text{if } q<0, \end{cases}$ \newline 
and \newline
$\chi^r(q,u^i)=\begin{cases} \overline{q} \, h^{i_1+\cdots+i_n}, & \text{if } q>0, \\ 0, & \text{if } q<0 \text{ and }i_k\text{ even for all } k=1,\ldots,n,\\ -\underline{q} \, h^{i_1+\cdots +i_n} , & \text{if } q<0 \text{ and } i_k \text{ odd for some } k=1,\ldots, n. \end{cases}$\newline
Furthermore, $\chi^\ell(q,1)=\underline{q}$ and $\chi^r(q,1)=\overline{q}.$
\end{enumerate}
\end{lem}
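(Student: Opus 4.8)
The plan is to prove Lemma~\ref{le:sigmachi} by a direct case analysis, treating each piecewise definition according to the sign of the scalar factor $q$ (and, in part (ii), the parity of the multi-index $i$). The whole statement reduces to bounding monomials, so I would organize the argument around two elementary principles: first, that multiplying an interval by a scalar reverses or preserves the endpoint ordering according to the sign of the scalar; and second, that an even power of a variable in $[-h,h]$ lies in $[0,h^{\text{even}}]$, whereas a monomial containing at least one odd power ranges over the full symmetric interval $[-h^{\deg},h^{\deg}]$.

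For part (i), I would fix $p\in[\underline p,\overline p]$ with $\underline p\,\overline p>0$, so that the whole interval lies on one side of the origin and in particular $\overline p\ge\underline p$. If $q>0$, then $qp$ is increasing in $p$, whence $q\underline p\le qp\le q\overline p$, matching $\sigma^\ell$ and $\sigma^r$. If $q<0$, then $qp$ is decreasing in $p$, so $q\overline p\le qp\le q\underline p$, again matching the stated definitions. This dispatches the first inequality in a couple of lines; the hypotheses $\underline p\,\overline p>0$ and $\underline q\,\overline q>0$ merely guarantee that the product $qp$ keeps a definite sign, which is what makes the piecewise formulas clean.

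For part (ii), the key observation is the range of the monomial $u^i=u_1^{i_1}\cdots u_n^{i_n}$ when each $u_j\in[-h,h]$ and $i\ne 0$. I would split into two subcases. If every exponent $i_k$ is even, then $u^i\ge 0$ and $u^i\le h^{i_1+\cdots+i_n}$, so $u^i\in[0,h^{|i|}]$ where $|i|=i_1+\cdots+i_n$. If some exponent $i_k$ is odd, then by flipping the sign of that coordinate one sees that $u^i$ attains both signs, and $|u^i|\le h^{|i|}$, so $u^i\in[-h^{|i|},h^{|i|}]$. Combining this range with the sign-dependent scaling from part (i) (now applied with the scalar $q\in[\underline q,\overline q]$ replacing the interval variable, and the monomial $u^i$ replacing $p$) yields all three lines of each of $\chi^\ell$ and $\chi^r$ by checking the four combinations of $\sgn q$ and the parity condition. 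The final assertion $\chi^\ell(q,1)=\underline q$ and $\chi^r(q,1)=\overline q$ is the degenerate case $u^i\equiv 1$, consistent with taking $|i|=0$.

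I do not anticipate a genuine obstacle here, since the lemma is purely bookkeeping; the statement itself flags the proof as straightforward. The only point requiring mild care is the bookkeeping in part (ii): one must correctly match the sign of $q$ against the parity of the monomial so that the optimum is attained at $0$ precisely when $q$ and $u^i$ are forced to have opposite attainable signs (i.e.\ $q>0$ with $u^i\ge 0$ gives lower bound $0$, and $q<0$ with $u^i\ge 0$ gives upper bound $0$). I would present a small verification table rather than belabor each of the six cases in prose, and then note that these bounds are sharp because each endpoint is realized by an explicit choice of $p$ (respectively of the $u_j$), which is exactly what the later interval-arithmetic application of the Poincar\'e--Miranda theorem will require.
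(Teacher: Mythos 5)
Your case analysis is correct, and it is exactly the argument the paper intends: the authors state the lemma without proof, declaring it straightforward, and your two principles (sign-dependent endpoint ordering for the scalar factor, and the range $[0,h^{|i|}]$ versus $[-h^{|i|},h^{|i|}]$ for the monomial according to parity) are precisely the required bookkeeping. The only point worth noting is that part~(i) cannot be invoked verbatim in part~(ii) since the monomial's range may contain $0$ (violating the hypothesis $\underline{p}\,\overline{p}>0$), but your four-case verification against $\sgn q$ and parity already supplies the needed argument directly, so nothing is missing.
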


\begin{lem}\label{le:algoritmo}
Let $h>0$ and $p_j$ be a positive non rational numbers such that $p_j\in [\underline{p_j},\overline{p_j}]$ with $\underline{p_j},\overline{p_j}$ rational numbers satisfying $\underline{p_j}\overline{p_j}>0,$ for $j=1,\ldots,m.$ Consider the polynomial 
\begin{equation}\label{eq:Ualg}
\mathcal U(u_1,\ldots,u_n)=\sum_{i_1+\cdots+i_n=0}^{M}
\biggl(\sum_{j=1}^{m}U_{j,i}\, p_j\biggr) u^i,
\end{equation}
with $u^i=u_1^{i_1}\cdots u_n^{i_n},$ $i=(i_1,\ldots,i_n),$ and $U_{j, i}$ rational numbers. Then,
\[
U^\ell_{i}\leq \sum_{j=1}^{m}U_{j,i}\, p_j\leq U^r_{i}
\]
with $U^\ell_{i}=\sum_{j=1}^{m}U_{j,i} \cdot \sigma^\ell (U_{j,i},p_j)$ and $U^r_{i}=\sum_{j=1}^{m}U_{j,i}\cdot \sigma^r (U_{j,i},p_j).$ Moreover, if $u_j\in[-h,h],$ for $j=1,\ldots,n,$ and $U^\ell_{i}\cdot U^r_{i}>0$ then 
\[
\underline{\mathcal U}=\sum_{i_1+\cdots+i_n=0}^{M}
\chi^\ell(U^\ell_{i},u^i)\leq \mathcal U(u_1,\ldots,u_n)\leq \sum_{i_1+\cdots+i_n=0}^{M}
\chi^r(U^r_{i},u^i)=\overline{\mathcal U}.
\]
\end{lem}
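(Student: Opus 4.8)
The statement is a two-level interval estimate, and the plan is to derive both chains of inequalities by applying Lemma~\ref{le:sigmachi} term by term and then summing, using that a finite sum of lower (resp.\ upper) bounds is a lower (resp.\ upper) bound for the corresponding sum. The inner level controls each coefficient $\sum_{j}U_{j,i}\,p_j$ against the uncertainty in the $p_j$, while the outer level controls the value of $\mathcal{U}$ against the uncertainty in the monomials $u^i$.

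I would first treat the coefficients. Fix a multi-index $i$. For each $j$ the number $U_{j,i}$ is rational, hence of known sign, and $p_j\in[\underline{p_j},\overline{p_j}]$; thus Lemma~\ref{le:sigmachi}(i), applied with $q=U_{j,i}$ and $p=p_j$, gives $\sigma^{\ell}(U_{j,i},p_j)\le U_{j,i}\,p_j\le\sigma^{r}(U_{j,i},p_j)$. Adding these inequalities over $j=1,\dots,m$ produces exactly the enclosure $U_i^{\ell}\le\sum_{j=1}^{m}U_{j,i}\,p_j\le U_i^{r}$ claimed in the first part.

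For the second part, set $c_i=\sum_{j=1}^{m}U_{j,i}\,p_j$ so that $\mathcal{U}(u_1,\dots,u_n)=\sum_{i}c_i\,u^i$. The first part gives $c_i\in[U_i^{\ell},U_i^{r}]$, and the hypothesis $U_i^{\ell}\cdot U_i^{r}>0$ forces this interval to avoid the origin, so $c_i$ has a definite sign. This is precisely the hypothesis $\underline{q}\,\overline{q}>0$ needed to invoke Lemma~\ref{le:sigmachi}(ii) with $q=c_i$, $\underline{q}=U_i^{\ell}$, $\overline{q}=U_i^{r}$, and $u_j\in[-h,h]$; it yields $\chi^{\ell}(U_i^{\ell},u^i)\le c_i\,u^i\le\chi^{r}(U_i^{r},u^i)$, the constant term $i=0$ being covered by the special values $\chi^{\ell}(q,1)=\underline{q}$ and $\chi^{r}(q,1)=\overline{q}$. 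Summing over all $i$ with $i_1+\dots+i_n\le M$ then gives $\underline{\mathcal{U}}\le\mathcal{U}(u_1,\dots,u_n)\le\overline{\mathcal{U}}$.

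No step involves a delicate estimate; the only point requiring care is the logical reduction in the second part, namely verifying that the assumption $U_i^{\ell}\cdot U_i^{r}>0$ is exactly what licenses the use of Lemma~\ref{le:sigmachi}(ii) on each coefficient, since that result fails when the enclosing interval of $c_i$ contains $0$. Checking this compatibility of hypotheses, rather than any computation, is the substantive part of the argument.
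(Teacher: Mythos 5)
Your proof is correct and is essentially the argument the paper intends: the paper omits the proof entirely, declaring it ``straightforward,'' and what it has in mind is exactly your two-level scheme of applying Lemma~\ref{le:sigmachi}(i) coefficient-wise and summing over $j$, then applying Lemma~\ref{le:sigmachi}(ii) to each coefficient $c_i=\sum_j U_{j,i}p_j$ (with $\underline{q}=U^\ell_i$, $\overline{q}=U^r_i$, the hypothesis $U^\ell_i U^r_i>0$ guaranteeing a definite sign) and summing over $i$, with the constant term handled by $\chi^\ell(q,1)=\underline{q}$, $\chi^r(q,1)=\overline{q}$. Your reading of $U^\ell_i$, $U^r_i$ as $\sum_j\sigma^{\ell}(U_{j,i},p_j)$, $\sum_j\sigma^{r}(U_{j,i},p_j)$ (without the spurious extra factor $U_{j,i}$ appearing in the lemma's displayed formula) is the correct one, as confirmed by Example~\ref{ex:algoritmo}.
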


The next example shows how the above two technical lemmas can be used to get rational lower and upper bounds for the values that a function takes in a given 3D-polyhedron. In this example the lower and upper bounds for the values of $\pi$ and $\sqrt{3}$ are chosen from their continued fraction.

\begin{example}\label{ex:algoritmo} Consider the polynomial 
\begin{equation}\label{poli}
P(u,v,w)=P_0+P_1 \,u+P_2 \,v^2w^2 + P_3\, uv^2w^4,
\end{equation}
with $P_{0}=\pi-5\sqrt{3}+4,$ $P_{1}=-\pi^2+3\sqrt{3}-3,$ $P_{2}=-2\pi^3-\sqrt{3}+70,$ and $P_{3}=4\pi+\sqrt{3}+7.$ 
Consider the following intervals containing $\pi$ and $\sqrt{3},$
\begin{equation*}
\pi\in\left[\underline{p},\overline{p}\right]=\left[\frac{333}{106},\frac{355}{113}\right] \quad \text{and} \quad \sqrt{3}\in\left[\underline{s},\overline{s}\right]=\left[\frac{5}{3},\frac{7}{4}\right].
\end{equation*} 
Then, for $u,v,w\in[-1/9,1/9],$ we have
\[
P(u,v,w)\in\bigg[-\frac{8036904331130}{3236907751533},-\frac{5753192708807927}{18184947748112394}\bigg].
\]
\end{example}

\begin{proof}Following the notation of Lemma~\ref{le:algoritmo}, we take $p_1=\pi,$	$p_2=\pi^2,$ $p_3=\pi^3,$ and $p_4=\sqrt{3}.$ Then, the coefficients of \eqref{poli} write as $P_0=p_1-5p_4+4,$ $P_1=-p_2+3p_4-3,$ $P_2=-2p_3-p_4+70,$ and $P_3=4p_1+p_4+7.$ 

The intervals given in the statement provide that $\{p_1,p_4,p_2\}\subset[\underline{p}^2,\overline{p}^2]$ and $p_3\in[\underline{p}^3,\overline{p}^3].$ So, these new variables allow us to define the coefficients $U_{j,i}$ in \eqref{eq:Ualg}. From Lemma~\ref{le:sigmachi}(i) we have
\begin{equation*}
\begin{aligned}
-\frac{341}{212}=\underline{p} -5\overline{s}+4=U_{000}^\ell\leq &P_0 \leq U_{000}^r=\overline{p} -5\underline{s}+4=-\frac{404}{339},\\
-\frac{100487}{12769}=-\overline{p}^2+3\underline{s}-3=U_{100}^\ell\leq &P_1 \leq U_{100}^r=-\underline{p}^2 +3\overline{s}-3= -\frac{21402}{2809},\\
\frac{35999881}{5771588}=-2\overline{p}^3 -\overline{s}+70=U_{022}^\ell\leq &P_2 \leq U_{022}^r= -2\underline{p}^3-\underline{s}+70 = \frac{11301029}{1786524},\\
\frac{3376}{159}=4\underline{p}+\underline{s}+7=U_{124}^\ell\leq &P_3 \leq U_{124}^r=4\overline{p}+\overline{s}+7 = \frac{9635}{452}.\\
\end{aligned}
\end{equation*}
Finally, applying Lemma~\ref{le:sigmachi}(ii) we get the following lower and upper bounds for $P,$
\begin{equation*}
U_{000}^\ell +U_{100}^\ell h - U_{124}^r h^7=\underline{\mathcal U}\leq P (u,v,w) \leq \overline{\mathcal U}=U_{000}^r -U_{100}^\ell h + U_{022}^r h^4 + U_{124}^r h^7.
\end{equation*}
The proof follows by substituting $h=1/2$ in the above expression.
\end{proof}

The last proposition deals with second order perturbation of family $S_2,$ which exhibits the highest number of limit cycles found in this paper. Theorem~\ref{thm:Main} is a direct consequence of it. In Proposition~\ref{pr:S2O1} we have studied the zeros of the first averaged function for three different straight lines of discontinuity. The best result was obtained when $\Sigma=\{y+\sqrt{3}x=0\}$. So, we shall perform the second order analysis only in this case.

\begin{prop}\label{pr:S2O2}
For $|\e|>0$ sufficiently small, the maximum number of limit cycles that $Z_{2,\e}$ can have in any neighborhood of the origin is at least 16 when the curve of discontinuity is $\{y+\sqrt{3}x=0\}.$
\end{prop}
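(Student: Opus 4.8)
The plan is to mirror the structure of the proofs of Propositions~\ref{pr:S1O2} and \ref{pr:S3O2}, reducing everything to a Taylor analysis of the second averaged function $\CF_2(r)$ at the origin, but replacing the ECT/versal-unfolding endgame by a genuine sign-chasing argument via the Poincar\'e--Miranda Theorem~\ref{thm:PoincareMiranda}. First I would assume $P^{\pm}(0,0)=Q^{\pm}(0,0)=0$ so that, by Lemma~\ref{le:hopfzie}, a pseudo-Hopf bifurcation supplies one extra limit cycle near the origin; it then suffices to produce $15$ simple zeros of $\CF_2$ accumulating at $r=0$. As in the previous proofs, I would impose $\CF_1\equiv 0$ (killing the first-order parameters $k_0,\ldots,k_7$ of Proposition~\ref{pr:S2O1}), compute the auxiliary functions $r_1^{\pm}$ from \eqref{r1} with $\al=-\pi/3$, and assemble $\CF_2=\CF_2^{(1)}+\CF_2^{(2)}$ using the integral tables $\mathcal{\{S,C\}}^{\alpha}_{k,\ell}$ and $\{s,c\}^{\lambda}_{k,\ell}$, $\{s,c\}^{\phi}_{k,\ell}$, $\{s,c\}^{\T}_{k,\ell}$ of Section~\ref{se:5}. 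The expected closed form involves the transcendental functions $\widetilde{L}$ and $\widetilde{\phi}$ already appearing in \eqref{eqmu}.

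Next I would expand $\CF_2$ in a Taylor series $\CF_2(r)=\sum_i f_i\,r^{i+1}+O(r^{N+2})$, where each $f_i$ is linear in the genuine second-order parameters $p^{\pm}_{2,i,j},q^{\pm}_{2,i,j}$ and quadratic in the surviving first-order parameters. By a suitable change of parameters (as in the introduction of the $c_i$ in the earlier proofs) I would arrange that the first several coefficients become free independent quantities, and then solve for the perturbation parameters so that $\CF_2(r)=r\bigl(\sum_{i=1}^{16} d_i\,r^{a_i}+O(r^{a_{16}+1})\bigr)$ for an appropriate increasing exponent sequence $a_i$, with the $d_i$ functionally independent. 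The delicate point here—and the reason the family $S_2$ is harder than $S_1,S_3$—is that the coefficients $d_i$ will depend on the transcendental constants $\pi$ and $\sqrt{3}$ arising from $\widetilde{\phi}$, so the linear-independence/rank statement cannot be read off by inspection and the versal-unfolding shortcut of the previous propositions is unavailable.

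This is where Lemmas~\ref{le:sigmachi} and \ref{le:algoritmo} and Example~\ref{ex:algoritmo} enter. Having fixed a near-origin scale and a sign pattern for the leading surviving coefficient, I would write $\CF_2$ evaluated at $16$ suitably chosen radii $0<r_1<\cdots<r_{16}$ (or, equivalently, consider the map $d\mapsto(\CF_2(r_1),\ldots,\CF_2(r_{16}))$ as a function on a small cube $B=[-a,a]^{16}$ in the parameter $d$). The aim is to verify the Poincar\'e--Miranda hypotheses: on each face $B_i^{\pm}$ the $i$-th component has the required sign. Because each component is a polynomial in the $d_j$ whose coefficients are combinations of $\pi$, $\sqrt{3}$, and rationals, I would bound these coefficients rigorously using the interval enclosures of Lemma~\ref{le:algoritmo}, with $\pi\in[333/106,355/113]$ and $\sqrt{3}\in[5/3,7/4]$ as in Example~\ref{ex:algoritmo}, thereby reducing each sign verification to a finite rational computation. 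A sign change of $f(c)=0$ somewhere in $B$ then yields a parameter value for which $\CF_2$ has $16$ nodal sign changes, hence (by analyticity and Theorem~\ref{thm:averagingF1F2}(ii)) $15$ simple zeros near the origin; together with the pseudo-Hopf cycle this gives $16$ crossing limit cycles.

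The hard part will be twofold: organizing the massive symbolic computation of $\CF_2$ and its Taylor coefficients in the presence of $\widetilde{L}$ and $\widetilde{\phi}$ so that the $d_i$ come out in a tractable polynomial form, and then choosing the test radii and the cube size $a$ so that the interval-arithmetic bounds from Lemma~\ref{le:algoritmo} are sharp enough to certify the sign conditions on all $32$ faces of $B$. In particular, the enclosures for $\pi$ and $\sqrt{3}$ must be tight enough that the dominant-term estimates do not lose the sign; balancing the cube radius $a$ against these enclosure widths is the genuinely nontrivial obstacle, and it is precisely what the computer-assisted application of Poincar\'e--Miranda is designed to overcome.
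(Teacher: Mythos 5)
Your proposal contains a genuine gap, and it sits exactly where the real difficulty of this proposition lies. You assert that ``by a suitable change of parameters'' one can make the coefficients $d_1,\ldots,d_{16}$ of the Taylor expansion of $\CF_2$ functionally independent. This is not available: the second-order parameters $p^\pm_{2,i,j},q^\pm_{2,i,j}$ enter $\CF_2$ linearly, but (as in the paper) they can only be solved for \emph{eight} of the first sixteen coefficients, namely $f_1,\ldots,f_6,f_8,f_{10}$; the remaining coefficients $f_7,f_9,f_{11},\ldots,f_{16}$ depend \emph{quadratically} (and homogeneously) on the first-order parameters $p^\pm_{1,i,j},q^\pm_{1,i,j}$, so no linear-algebra or rank argument can render them independent. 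If the $d_i$ really were free, the Weierstrass/versal-unfolding endgame of Propositions~\ref{pr:S1O2} and \ref{pr:S3O2} would already finish the proof and no Poincar\'e--Miranda argument would be needed at all. The actual problem is to exhibit a point of the parameter space lying on the variety $\{f_7=f_9=f_{11}=f_{12}=f_{13}=f_{14}=f_{15}=0\}$ at which $f_{16}\neq0$ and the intersection is transversal; since this system reduces to three polynomial equations of degree ten in three unknowns with coefficients involving $\pi$ and $\sqrt3$, it cannot be solved in closed form, and this is precisely what forces the computer-assisted step.

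Your use of Theorem~\ref{thm:PoincareMiranda} is also misdirected. The paper applies it in dimension~$3$, in \emph{parameter} space, to certify a numerically found common zero $(z_5^*,z_6^*,z_7^*)$ of the reduced system $\widetilde f_{12}=\widetilde f_{14}=\widetilde f_{15}=0$ (after solving a subsystem linearly for $z_1,\ldots,z_4$), while Lemmas~\ref{le:sigmachi} and \ref{le:algoritmo} provide rational interval bounds showing in addition that $\eta$, $\widetilde f_{16}$, and the Jacobian determinant do not vanish on the same tiny cube; transversality then unfolds the order-$16$ weak focus into $15$ simple zeros. Your version instead applies Miranda to the map $d\mapsto(\CF_2(r_1),\ldots,\CF_2(r_{16}))$ on a $16$-dimensional cube. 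Even granting the sign hypotheses on all $32$ faces (which cannot be verified without the independence that is missing), the conclusion would be a parameter value at which $\CF_2$ vanishes simultaneously at the sixteen prescribed radii --- zeros that need not be simple, so Theorem~\ref{thm:averagingF1F2}(ii) does not apply to them, and no count of limit cycles follows. So the proposal both begs the question at the rank step and extracts from Poincar\'e--Miranda a conclusion of the wrong kind.
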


\begin{proof}
The proof follows basically the same steps as the proofs of Propositions~\ref{pr:S1O2} and \ref{pr:S3O2}. Nevertheless, in this case, some of the integrals of $\CF_2$ cannot be explicitly obtained. Then, since the functions are analytic near the origin, we compute the Taylor series of the integrand before integrating. We recall that the first order analysis has been performed in Proposition~\ref{pr:S2O1}. Again, an extra limit cycle can be obtained from a pseudo-Hopf bifurcation, so we may assume that $P^{\pm}(0,0)=Q^{\pm}(0,0)=0.$ Then, the proof will consists in applying the second order averaging method to get at least 15 limit cycles bifurcating from the origin.

Using \eqref{r1} for $\alpha=-\pi/3,$ the functions $r^{+}_1(\T,R)$ and $r^{-}_1(\T-\pi,R)$ write \begin{equation*}
r^{+}_1(\T,R)=\int_{-\pi/3}^{-\pi/3+\T}F_1^+(\T,R)d\T,
\end{equation*}
and
\begin{equation*}
r^{-}_1(\T-\pi,R)=\int_{-\pi/3}^{-\pi/3+\T}F^{-}_1(\T-\pi,-R)d\T.
\end{equation*} 
The above integrals can be computed using the expressions for $\mathcal{\{S,C\}}_{k,\ell}^{\alpha=-\pi/3},$ for $\ell=1$ and $k=0,1,2,3,$ from Section~\ref{se:5}. The second summand of the second averaged function, $\CF_2^{(2)},$ follows from the integrals
\begin{equation*}
\begin{aligned}
\mathcal{G}^+(R)&=\int_{-\pi/3}^{2\pi/3}\bigg(\dfrac{\p}{\p R}F_1^+(\T,R)r_1^+(\T,R) \bigg)d\T,\\
\mathcal{G}^-(R)&=\int_{-\pi/3}^{2\pi/3} \bigg(\dfrac{\p}{\p R}F_1^-(\T-\pi,-R)r_1^-(\T-\pi,-R)\bigg)d\T.
\end{aligned}
\end{equation*}
We point out that the integrands of the above integrals are rational functions with denominators $(1+R\cos\T)^2$ and numerators depending on 
\[
\{R,\theta, \cos\T, \sin\T, \lambda(R, -\pi/3 + \T), \lambda(R, -4\pi/3 + \T), \phi(R, -\pi/3 + \T), \phi(R, -4\pi/3+ \T)\}.
\]
Computing the Taylor series around $R=0,$ integrating on the interval $[-\pi/3,2\pi/3]$, and going back to the original variable $r, $ we get the Taylor series around $r=0$ of $\CF_2^{(2)}(r)$. The Taylor series of $\CF_2^{(1)}(r)$ is obtained analogously to the series of $\CF_1(r)$ in Proposition~\ref{pr:S2O1}. Accordingly, the second averaged function writes
\begin{equation*}
\CF_2(r)=\sum_{i=1}^{n} f_ir^i+O(r^{n+1}).
\end{equation*}
The coefficients $f_i$'s depend linearly on $\{p^\pm_{2,i,j},q^\pm_{2,i,j}\}$ and quadratically on $\{p^\pm_{1,i,j},q^\pm_{1,i,j}\}.$ 

\medskip

The rest of the proof is devoted to show that there exists a transformation on the parameters such that the above function becomes
\[
\CF_2(r)=\sum_{i=1}^{16} d_ir^i+O(r^{n+1}),
\]
where $d_1,\ldots,d_{16}$ are independent parameters. In fact, we shall prove the existence of a transversal curve of weak foci of order 16. The transversality also guarantees the unfolding of 15 simple zeros near the origin because our function, also the perturbed one, vanishes at zero. The existence of such curve is obtained in two steps. Firstly, we analyse the maximal rank $(f_1,\ldots,f_n)$ with respect to the linear parameters $\{p^\pm_{2,i,j},q^\pm_{2,i,j}\}.$ Secondly, proceeding with a change of parameters, which eliminates the linear terms, we study the quadratic terms regarding $\{p^\pm_{1,i,j},q^\pm_{1,i,j}\}$ from $(f_1,\ldots,f_n)$. We shall see that these quadratic terms are homogeneous and we show the existence of a transversal straight line such that these terms vanish on it. The described procedure is detailed in \cite[Theorems. 2.1 and 3.1]{Chr2006}. These ideas have been originally introduced in \cite{ChiJac1989,ChiJac1991} for quadratic vector fields and have also been employed in \cite{Han1999} for Li\'enard families.

Firstly, we see that the system of equations 
\[
\{f_1=d_1,\ldots,f_6=d_6,f_8=d_8,f_{10}=d_{10}\}
\]
has a unique solution with respect to the variables $p^{+}_{2,1,0}, p^{+}_{2,0,2},$ $p^{+}_{2,1,1}, p^{+}_{2,2,0}, q^{+}_{2,2,0}, q^{+}_{2,0,1},$ $p^{-}_{2,1,1},$ and $q^{-}_{2,2,0}.$ Hence, after this change, the rest of coefficients remains quadratic.

Secondly, assuming $f_{16}\ne0$ we shall obtain a transversal solution of the quadratic system 
\[
\mathcal{S}: \{f_7=f_9=f_{11}=f_{12}=f_{13}=f_{14}=f_{15}=0\}.
\]
Since there are more parameters than necessary, we impose that $\{p^-_{1,2,0}=p^+_{1,1,1}=q^-_{1,0,1}=q^-_{1,0,2}=q^-_{1,1,0}=q^-_{1,1,1}=q^-_{1,2,0}=q^+_{1,0,1}=q^+_{1,0, 2}=q^+_{1,1,0}=q^+_{1,1,1}=q^+_{1,2,0}=0,p^+_{1,2,0}=1\}.$ Furthermore, it is not restrictive to assume that the first parameters $d_1,d_2,\ldots,d_6,d_8,$ and $d_{10}$ vanish. For the sake of simplicity, we change the names of the remaining parameters $[p^-_{1,0,1},p^-_{1,1,0},p^+_{1,0,1},p^+_{1,1,0},p^-_{1,0,2},p^-_{1,1,1},$ $p^+_{1,0,2}]$ to $[z_1,\ldots,z_7].$ In order to solve the quadratic system $\mathcal{S}$ we consider two quadratic subsystems, namely $\mathcal{S}_1=\{f_7=f_9=f_{11}=f_{13}=f_{15}=0\}$ and $\mathcal{S}_2=\{f_{12}=f_{14}=f_{15}=0\}.$ Then, we study the intersection between their solutions. 

Using the condition $f_{15}=0,$ the subsystem $\mathcal{S}_1$ can be rewritten in order that all the equations depend linearly on $z_1,z_2,z_3,$ and $z_4.$ So, solving $\mathcal{S}_1$ in these parameters we get
\[
z_i=\dfrac{\zeta_i(z_5,z_6,z_7)}{\eta(z_5,z_6,z_7)}, \quad \text{for} \quad i=1,2,3,4,
\]
where $\zeta_i$ are polynomials of degree 5 and $\eta$ is a polynomial of degree 4. Later on, we shall see that $\eta$ does not vanish at the intersection point.
Accordingly, the parameters $f_{12},f_{14},$ and $f_{15}$ write
\begin{equation*}\label{eq:ft12ft14ft15}
f_{12}=\frac{\widetilde{f}_{12}(z_5,z_6,z_7)}{(\eta(z_5,z_6,z_7))^2},
\quad 
f_{14}=\frac{\widetilde{f}_{14}(z_5,z_6,z_7)}{(\eta(z_5,z_6,z_7))^2},
\quad
f_{15}=\frac{\widetilde{f}_{15}(z_5,z_6,z_7)}{(\eta(z_5,z_6,z_7))^2}.
\end{equation*}
where $\widetilde{f}_{12}, \widetilde{f}_{14},$ and $\widetilde{f}_{15}$ are polynomials of degree 10. So, on the variety provided by $\mathcal{S}_1$, the subsystem $\mathcal{S}_2$ is equivalent to
\begin{equation*}\label{newsystem}
\widetilde{\mathcal{S}}_2:\{\widetilde{f}_{12}(z_5,z_6,z_7)=0,\widetilde{f}_{13}(z_5,z_6,z_7)=0,\widetilde{f}_{14}(z_5,z_6,z_7)=0\},
\end{equation*}
provided that $\eta(z_5,z_6,z_7)\neq0.$ Consequently, the system $\mathcal{S}$ is reduced to $\widetilde{\mathcal{S}}_2$ whenever $\eta(z_5,z_6,z_7)\neq0.$ Although $\widetilde{\mathcal{S}}_2$ has only 3 equations and 3 unknowns, the high degree of these equations is a barrier for solving the system. Furthermore, the algebraic varieties provided by each equation of $\widetilde{\mathcal{S}}_2$ are numerically close to each other (see Figure~\ref{fi:varorigcoords}), which adds an extra numerical difficult.

\begin{figure}[h]
	\begin{center}
		\begin{overpic}[height=5cm]{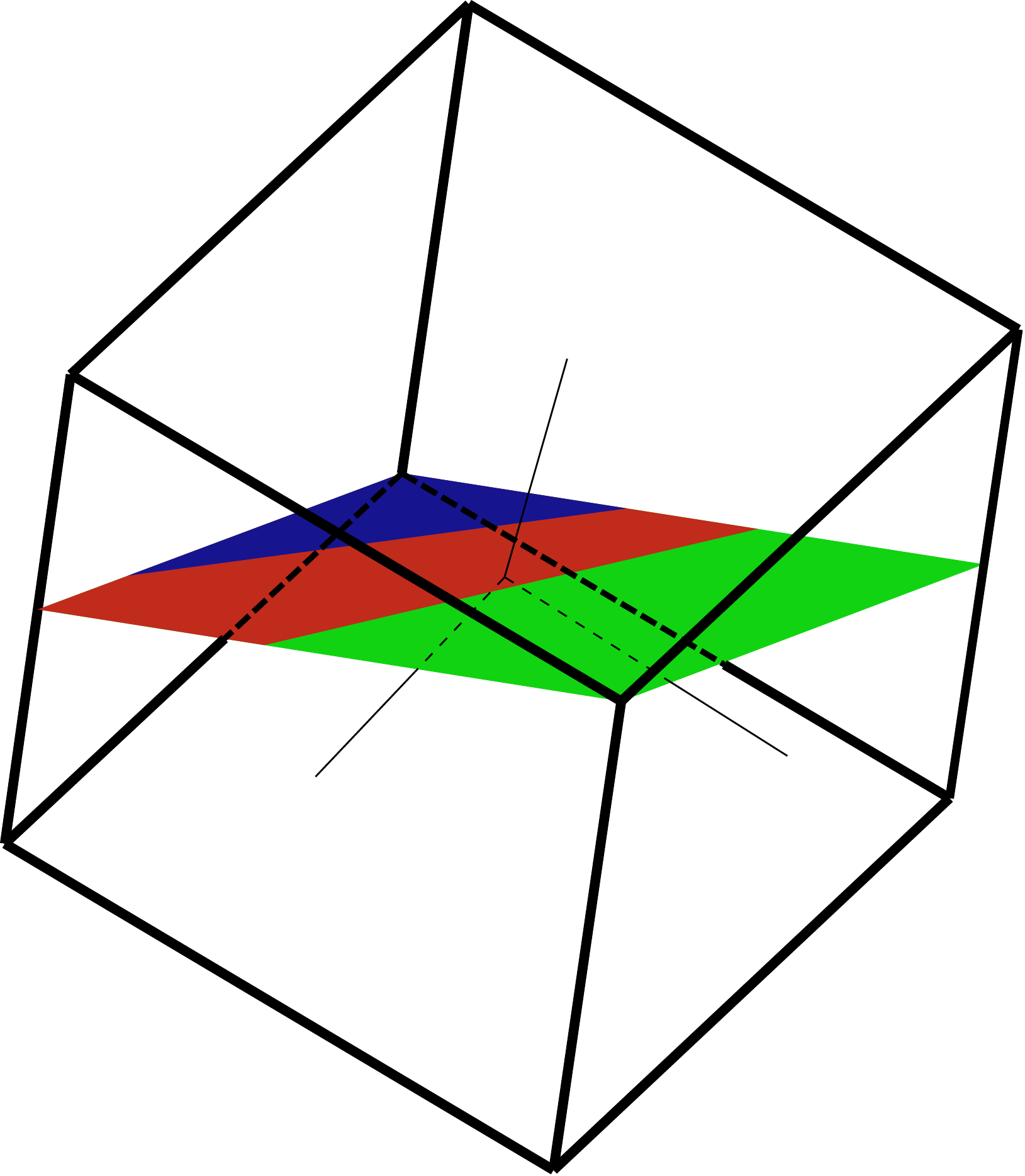}
			\put(46,75){$z_5$}
			\put(20,26){$z_6$}
			\put(62,29){$z_7$}
		\end{overpic}
	\end{center}
\caption{Plot of the varieties $\widetilde{f}_{12}=0,$ $\widetilde{f}_{14}=0,$ and $\widetilde{f}_{15}=0$ in a cube centered at $(z_5^*,z_6^*,z_7^*)$ with edges of length $10^{-2}.$ They are depicted in red, blue, and green, respectively.}\label{fi:varorigcoords}
\end{figure}

In order to overcome these difficulties, we shall first work with numerical approximations of the solutions. Then, using Lemmas~\ref{le:sigmachi} and \ref{le:algoritmo}, and Theorem~\ref{thm:PoincareMiranda} we prove analytically the numerical results. Working with enough precision we get the following numerical solution of the system $\widetilde{\mathcal{S}}_2$,
\[
(z_5^*,z_6^*,z_7^*)\approx(-0.260976000571,0.111582119099,-0.84487667629841).
\]
Moreover, we see that $\eta(z_5^*,z_6^*,z_7^*)\approx 1.85317498452382 \cdot 10^{-11}$ and $\widetilde{f}_{16}(z_5^*,z_6^*,z_7^*)=8.8767062451915\cdot 10^{-26},$ so $f_{16}=\widetilde{f}_{16}(z_5,z_6,z_7)/(\eta(z_5,z_6,z_7))^2\neq0.$ Additionally, the intersection is transversal because the determinant of the Jacobian matrix of $f=(\widetilde{f}_{12},\widetilde{f}_{14},\widetilde{f}_{15})$ with respect to $(z_5,z_6,z_7)$ evaluated at the solution $(z_5^*,z_6^*,z_7^*)$ does not vanish. In fact, $J_f(z_5^*,z_6^*,z_7^*)\approx -5.379835263496\cdot 10^{-67}.$ It is worthwhile to mention that although the values for $\eta,$ $\widetilde{f}_{16},$ and $J_f$ are very small at $(z_5^*,z_6^*,z_7^*),$ we were able to observe that they remain fixed when we increase the precision of the computations, while the values for $\widetilde{f}_{12}, \widetilde{f}_{14},$ and $\widetilde{f}_{15}$ decrease to zero. 

\medskip

Finally, we shall prove analytically the existence of such transversal intersection point $(z_5^*,z_6^*,z_7^*).$ In order to apply Lemmas~\ref{le:sigmachi} and \ref{le:algoritmo}, we make the following change of variables, 
\begin{equation*}
\begin{aligned}
z_5=&-\frac{102563793961}{75692301} \, u_1+\frac{93673471843}{117235838} \, u_2-\frac{5228323783}{13687494949} \, u_3-\frac{1104348344}{4231608813},\\[5pt]
z_6=&-\frac{114951879798}{118751113} \, u_1+\frac{66846520379}{116131808} \, u_2+\frac{1728113446}{4218432187} \, u_3+\frac{859801297}{7705547304},\\[5pt]
z_7=&\frac{131538341646}{188147809} \, u_1-\frac{23870722389}{57947275} \, u_2+\frac{1387092713}{5464980203} \, u_3-\frac{2790022856}{3302284149}.
\end{aligned}
\end{equation*}
Then, 
\begin{equation*}
\begin{aligned}
\widetilde{f}_{12}&=\xi_1(u_1,u_2,u_3),&
\widetilde{f}_{14}&=\xi_2(u_1,u_2,u_3),&
\widetilde{f}_{15}&=\xi_3(u_1,u_2,u_3),\\
\widetilde{f}_{16}&=\xi_4(u_1,u_2,u_3),&
\eta&=\xi_5(u_1,u_2,u_3),&
J_f&=\xi_6(u_1,u_2,u_3).
\end{aligned}
\end{equation*}
Now, the problem is to check that the varieties defined by $\xi_1=0,$ $\xi_2=0,$ and $\xi_3=0$ intersect transversally near the origin (see Figure~\ref{fi:varnewcoords}).
\begin{figure}[h]
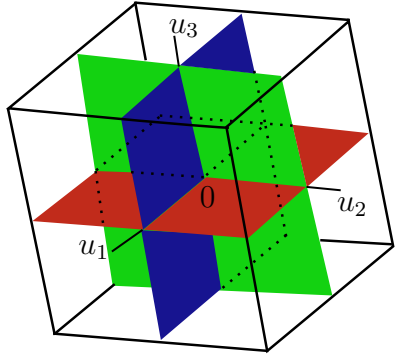

	\begin{center}
		\begin{overpic}{fig5}
			\put(42,83){$u_3$}
			\put(19,26){$u_1$}
			\put(85,37){$u_2$}
			\put(50,38){$0$}
		\end{overpic}
	\end{center}
	\caption{Plot of $\xi_1=\xi_2=\xi_3=0$ in a neighborhood of the origin. The varieties are drawn in green, blue, and red, respectively. The length of the edges of the 3d-cube is $10^{-10}.$}\label{fi:varnewcoords}
\end{figure} 

We notice that the functions $\xi_i,$ $i=1,\ldots,6,$ are polynomials in $(u_1,u_2,u_3).$ Moreover, $\xi_i$, $i=1,\ldots,4$ have degree 10, $\xi_5$ has degree 4, and $\xi_6$ has degree 27. We see that the coefficients of the previous polynomials depend on the irrational numbers $\pi$ and $\sqrt{3}$. More specifically, the coefficients of $\xi_i,$ $i=1,\ldots,4,$ depend on $\pi$ up to power 12, $\xi_5$ depends on $\pi$ up to power 5, and $\xi_6$ depends on $\pi$ up to power 37. The number $\sqrt{3}$ appears in these coefficients with no exponent. For each function $\xi_i,$ let $\mu_i$ denote the degree with respect to $\pi.$ Then, substituting $\sqrt{3},\pi,\pi^2,\ldots,\pi^\mu_i$ by $p_1,p_2,\ldots,p_{\mu_i+1},$ respectively, all the coefficients of the polynomial $\xi_i(p_1,p_2,\ldots,p_{\mu_i+1},u_1,u_2,u_3)$ are now rational numbers defined as the quotient of two big integers, around 1000 figures each. Moreover, they have $6292,$ $6292,$ $6006,$ $6292,$ $220$ and $234668$ monomials. 
 
Finally, we can apply Theorem~\ref{thm:PoincareMiranda} to $(\xi_1,\xi_2,\xi_3)$ in $B=[-h,h]^3,$ with $h=10^{-10},$ because
\begin{equation*}
\begin{aligned}
\xi_1(B_1^-)&\subset[a_1^-,b_1^-]\approx [-3.58842524 \cdot 10^{-32},-3.20226086 \cdot 10^{-32}],\\
\xi_1(B_1^+)&\subset[a_1^+,b_1^+]\approx [ 3.28009013 \cdot 10^{-32}, 3.66625367 \cdot 10^{-32}],\\
\xi_2(B_2^-)&\subset[a_2^-,b_2^-]\approx [-3.70185379 \cdot 10^{-32},-3.04215854 \cdot 10^{-32}],\\
\xi_2(B_2^+)&\subset[a_2^+,b_2^+]\approx [ 3.16666084 \cdot 10^{-32}, 3.82635712 \cdot 10^{-32}],\\
\xi_3(B_3^-)&\subset[a_3^-,b_3^-]\approx [-4.72369496 \cdot 10^{-32}, -1.41476503 \cdot 10^{-32}],\\
\xi_3(B_3^+)&\subset[a_3^+,b_3^+]\approx [ 2.14481860 \cdot 10^{-32}, 5.45375151 \cdot 10^{-32}].\\
\end{aligned}
\end{equation*}
We notice that the values for $a_i^\pm$ and $b_i^\pm,$ for $i=1,2,3,$ are all rational numbers explicitly computed using Lemmas~\ref{le:sigmachi}, \ref{le:algoritmo}, and
\[
\pi\in\bigg[\frac{21053343141}{6701487259},\frac{1783366216531}{567663097408}\bigg], \quad \sqrt{3}\in\bigg[\frac{716035}{413403},\frac{978122}{564719}\bigg].
\]
Hence, we conclude that there exists a point $(\xi_1^*,\xi_2^*,\xi_3^*)$ in $B$ such that $\xi_1=\xi_2=\xi_3=0.$ Additionally, $\xi_4,$ $\xi_5,$ $\xi_6$ do not vanish on $B.$ Indeed,
\begin{equation*}
\begin{aligned}
\xi_4(B)&\subset[a_4^-,b_4^-]\approx [ 8.87669600 \cdot 10^{-26}, 8.87671664 \cdot 10^{-26}],\\
\xi_5(B)&\subset[a_5^-,b_5^-]\approx [ 1.85317477 \cdot 10^{-11}, 1.85317520 \cdot 10^{-11}],\\
\xi_6(B)&\subset[a_6^+,b_6^+]\approx [-5.37983643 \cdot 10^{-67},-5.37983443 \cdot 10^{-67} ],\\
\end{aligned}
\end{equation*}
Again, the values for $a_i^\pm$ and $b_i^\pm,$ $i=4,5,6,$ are rational numbers explicitly computed using Lemmas~\ref{le:sigmachi} and \ref{le:algoritmo}. So, we conclude that the solution provided by Theorem~\ref{thm:PoincareMiranda} is a transversal solution of $\mathcal{S}$. 
\end{proof}

\section{Appendix: Explicit computations of the integrals}\label{se:5}
This section is devoted to provide explicit expressions for some of the integrals necessary to compute the averaged functions. We also introduce new special integral functions as well as some of their properties and relations. The proofs follow closely the results from \cite{ProTor2014}.

\smallskip

For each pair of natural numbers $k$ and $\ell,$ we define the following functions:
\begin{equation}\label{eq:11}
\begin{aligned}
\mathcal{S}^\alpha_{k,\ell}(r,\theta)&=\int_{\alpha}^{\alpha+\theta}
\dfrac{\sin(k\psi)}{(1+r\cos\psi)^{\ell}}d\psi, &
\mathcal{C}^\alpha_{k,\ell}(r,\theta)&=\int_{\alpha}^{\alpha+\theta}
\dfrac{\cos(k\psi)}{(1+r\cos\psi)^{\ell}}d\psi,
\\
{s}^\alpha_{k,\ell}(r)&=\int_{\alpha}^{\alpha+\pi}
\dfrac{\sin(k\T)}{(1+r\cos\T)^{\ell}}d\T,
&\!\!\!\!
{c}^\alpha_{k,\ell}(r)&=\int_{\alpha}^{\alpha+\pi}
\dfrac{\cos(k\T)}{(1+r\cos\T)^{\ell}}d\T,
\\
{s}^{\lambda}_{k,\ell}(r)&=\int_{0}^{\pi}
\dfrac{\sin(k\T)\,\lambda(r,\T)}{(1+r\cos\T)^{\ell}}d\T,
&\!\!\!\!
{c}^{\lambda}_{k,\ell}(r)&=\int_{0}^{\pi}
\dfrac{\cos(k\T)\,\lambda(r,\T)}{(1+r\cos\T)^{\ell}}d\T.
\\
{s}^{\phi}_{k,\ell}(r)&=\int_{0}^{\pi}
\dfrac{\sin(k\T)\,\phi(r,\T)}{(1+r\cos\T)^{\ell}}d\T,
&\!\!\!\!
{c}^{\phi}_{k,\ell}(r)&=\int_{0}^{\pi}
\dfrac{\cos(k\T)\,\phi(r,\T)}{(1+r\cos\T)^{\ell}}d\T,
\\
{s}^{\T}_{k,\ell}(r)&=\int_{0}^{\pi}
\dfrac{\T\sin(k\T)}{(1+r\cos\T)^{\ell}}d\T,
&\!\!\!\!
{c}^{\T}_{k,\ell}(r)&=\int_{0}^{\pi}
\dfrac{\T\cos(k\T)}{(1+r\cos\T)^{\ell}}d\T.
\end{aligned}
\end{equation}
Here, $r\in(-1,1),$ $\theta\in[-\pi,\pi],$ and $\phi,$ $\lambda$ are the two periodic functions
\begin{equation}\label{eq:12b}
\begin{aligned}
\phi(r,\theta) &=\dfrac{1}{\sqrt{1-r^2}}\left(\theta -2 \arctan\left(\sqrt{\dfrac{1-r}{1+r}}\tan\left(\dfrac{\theta}{2}\right) \right)\right),\\
\lambda(r,\theta)&=\log(1+r\cos\theta).\\
\end{aligned}
\end{equation}
\begin{lem}\label{lem 5.1} Let $\phi,\lambda$ be the functions defined by \eqref{eq:12b}. Then, 
$\phi(r,0)=\phi(-r,0)=0,$ $\phi(r,\pi)=\phi(r,-\pi)=0,$ and $\phi(-r,t+\pi)=\phi(r,t).$
Moreover, 
\[
\begin{aligned}
\frac{\partial}{\partial r}\phi(r, \T) = &\frac{r\phi(r, \T)}{1-r^2}+\frac{\sin\T}{(1-r^2)(1+r\cos\T)},\\
\frac{\partial}{\partial r}\lambda(r,\T)=&\frac{\cos\T}{1+r\cos \T}.
\end{aligned}
\]
\end{lem}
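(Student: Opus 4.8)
The plan is to treat the lemma as three essentially computational claims: the boundary evaluations of $\phi$, the parity/shift relation $\phi(-r,t+\pi)=\phi(r,t)$, and the two partial-derivative formulas. I would dispatch the $\lambda$ statement first, since $\lambda(r,\theta)=\log(1+r\cos\theta)$ gives $\partial_r\lambda=\cos\theta/(1+r\cos\theta)$ by one application of the chain rule. For the boundary values of $\phi$ I would substitute directly into \eqref{eq:12b}: at $\theta=0$ one has $\tan(\theta/2)=0$, hence $\arctan(0)=0$ and $\phi(r,0)=\phi(-r,0)=0$; at $\theta=\pm\pi$ the quantity $\tan(\theta/2)$ diverges to $\pm\infty$, so, reading $\phi$ through its continuous extension, the arctangent tends to $\pm\pi/2$ and the bracket $\theta\mp\pi$ cancels, giving $\phi(r,\pi)=\phi(r,-\pi)=0$. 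The only care needed here is to interpret these as limiting values of the continuous extension, since the arctangent expression is singular at odd multiples of $\pi$.

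For $\partial_r\phi$ I would differentiate the closed form in \eqref{eq:12b} directly rather than an integrand. Writing $\phi(r,\theta)=(1-r^2)^{-1/2}\bigl(\theta-2\arctan(gT)\bigr)$ with $g=\sqrt{(1-r)/(1+r)}$ and $T=\tan(\theta/2)$, the derivative of the prefactor $(1-r^2)^{-1/2}$ contributes exactly $r(1-r^2)^{-1}\phi$, which is the first claimed term. The derivative hitting the arctangent produces $-2(1-r^2)^{-1/2}Tg'/(1+g^2T^2)$; I would compute $g'=-1/\bigl(g(1+r)^2\bigr)$ and use the factorization $g(1+r)=\sqrt{1-r^2}$ to reduce this to $2T/\bigl((1-r^2)[(1+r)+(1-r)T^2]\bigr)$. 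The half-angle identities $\sin\theta=2T/(1+T^2)$ and $\cos\theta=(1-T^2)/(1+T^2)$ then show $2T/[(1+r)+(1-r)T^2]=\sin\theta/(1+r\cos\theta)$, which is precisely the second term. This is the most calculation-heavy step, but it is routine once the identity $g(1+r)=\sqrt{1-r^2}$ is in hand.

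For the shift relation $\phi(-r,t+\pi)=\phi(r,t)$ I would sidestep the branch ambiguity of the arctangent by passing to an integral representation. The Weierstrass substitution $u=\tan(\psi/2)$ evaluates $\int_0^\theta d\psi/(1+r\cos\psi)=2(1-r^2)^{-1/2}\arctan(gT)$, so that
\[
\phi(r,\theta)=\frac{\theta}{\sqrt{1-r^2}}-\int_0^\theta\frac{d\psi}{1+r\cos\psi},
\]
a representation free of branch cuts when read as the continuous antiderivative. Applied with $r\mapsto-r$ and $\theta=t+\pi$, the substitution $\psi=\pi+\sigma$ turns $1-r\cos\psi$ into $1+r\cos\sigma$ and splits the integral as $\int_{-\pi}^{0}+\int_{0}^{t}$; using evenness in $\sigma$ and the standard value $\int_0^\pi d\sigma/(1+r\cos\sigma)=\pi/\sqrt{1-r^2}$ for the first piece, the $\pi/\sqrt{1-r^2}$ terms cancel against $(t+\pi)/\sqrt{1-r^2}$, leaving exactly $\phi(r,t)$.

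The main obstacle is not any single computation but the bookkeeping of branches and limits: the closed form in \eqref{eq:12b} is genuinely discontinuous at odd multiples of $\pi$, so the boundary evaluations and especially the shift relation must be understood against the continuous (periodic) extension of $\phi$. Establishing the integral representation once and then arguing entirely with it is the cleanest way to make the parity statement rigorous; after that, all four remaining assertions reduce to elementary trigonometric identities.
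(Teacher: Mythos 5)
Your proof is correct, and it diverges from the paper's in exactly one place: the shift relation $\phi(-r,t+\pi)=\phi(r,t)$. The paper argues via the auxiliary function $F(r,\theta)=\phi(-r,\theta+\pi)-\phi(r,\theta)$, checking that $F(r,0)=\phi(-r,\pi)-\phi(r,0)=0$ and that $\partial_\theta F\equiv 0$ (which amounts to the identity $\partial_\theta\phi(r,\theta)=(1-r^2)^{-1/2}-(1+r\cos\theta)^{-1}$, invariant under $(r,\theta)\mapsto(-r,\theta+\pi)$), so that $F$ vanishes identically. You instead make the antiderivative explicit, writing $\phi(r,\theta)=\theta(1-r^2)^{-1/2}-\int_0^\theta(1+r\cos\psi)^{-1}\,d\psi$ read as the continuous antiderivative, and then compute directly with the substitution $\psi=\pi+\sigma$, evenness, and the value $\int_0^\pi(1+r\cos\sigma)^{-1}\,d\sigma=\pi/\sqrt{1-r^2}$. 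The two arguments are mathematically equivalent --- both reduce the claim to the $\theta$-derivative of $\phi$ --- but your version has the merit of confronting the branch ambiguity of the arctangent head-on: the closed form in \eqref{eq:12b} is singular at odd multiples of $\pi$, and the paper's one-line appeal to ``substituting $\theta=0,\pi$'' and to $F(r,0)=0$ silently relies on the continuous extension that you spell out. The paper's $F$-function argument is slightly leaner (no explicit evaluation of the integral over $[0,\pi]$ is needed, only that the two derivatives coincide), while yours is more self-contained and makes the boundary evaluations $\phi(r,\pm\pi)=0$ and the periodic reading of $\phi$ rigorous in one stroke; your verifications of $\partial_r\phi$, $\partial_r\lambda$, and the boundary values agree with the paper's ``easily checked'' computations.
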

\begin{proof}
The first properties follow simply by substituting $\theta=0,\pi$ in the definition of $\phi.$ The last is satisfied because $F(r,0)=0$ and the derivative, with respect to $\theta,$ of $F(r,\theta)=\phi(-r,\theta+\pi)-\phi(r,\theta)$ vanishes identically. The expressions of their derivatives are easily to be checked.
\end{proof}
We notice that not all the above integrals can be explicitly obtained. So, next lemma introduce some new functions. They, together with their derivatives, are useful for the proofs of the results.

\begin{lem}\label{lem 5.1a} Let $\Phi_0^{0}(r),$ $\Lambda_0^{0}(r),$ and $\Lambda_1^{0}(r)$ be the functions
\begin{equation}\label{eq:12a}
\begin{aligned}
\Phi_0^{0}(r)=&\int_{0}^{\pi} \phi(r, \T)d\T,\\
\Lambda_0^{0}(r)=&\int_{0}^{\pi} \lambda(r,\T)d\T=-\pi\log\left(\dfrac{2(1-\sqrt{1-r^2})}{r^2}\right),\\
\Lambda_1^{0}(r)=&\int_{0}^{\pi} \dfrac{\lambda(r,\T)}{1+r\cos\T}d\T=\dfrac{\pi\log(1-r^2)-\Lambda_0^0(r)}{\sqrt{1-r^2}}.\\
\end{aligned}
\end{equation}
Then, their derivatives can be expressed explicitly as functions of \eqref{eq:12b} and \eqref{eq:12a},
\begin{align*}
\frac{d}{dr}\Phi_0^{0}(r)=&\frac{r^2\,\Phi_0^{0}(r)+\lambda(r,0)-\lambda(-r,0)}{r(1-r^2)}\\
\frac{d}{dr}\Lambda_0^{0}(r)=&-\dfrac{\pi(1-\sqrt{1-r^2})}{r\sqrt{1-r^2}},\\
\frac{d}{dr}\Lambda_1^{0}(r)=&\dfrac{1}{1-r^2}\bigg(r\,\Lambda_1^{0}(r)-\dfrac{(1+r^2-\sqrt{1-r^2})\pi}{r\sqrt{1-r^2}}
\bigg).
\end{align*}
\end{lem}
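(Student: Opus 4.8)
The plan is to handle the three functions one at a time, first securing the two closed-form identities displayed in \eqref{eq:12a} and then deducing the three derivative formulas from them. Throughout, differentiation under the integral sign is legitimate: for each $r$ in a compact subinterval of $(-1,1)$ the integrands in \eqref{eq:12a}, together with their $r$-derivatives supplied by Lemma~\ref{lem 5.1}, are continuous and uniformly bounded on $[0,\pi]$, so the Leibniz rule applies.

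To establish the closed forms I would use the Weierstrass substitution $t=\tan(\T/2)$, which sends $1+r\cos\T$ to $((1+r)+(1-r)t^2)/(1+t^2)$ and $d\T$ to $2\,dt/(1+t^2)$. For $\Lambda_0^0$ this turns $\int_0^\pi\lambda(r,\T)\,d\T$ into $2\int_0^\infty\big(\log((1+r)+(1-r)t^2)-\log(1+t^2)\big)/(1+t^2)\,dt$, which reduces to the classical evaluation $\int_0^\infty \log(t^2+\gamma^2)/(1+t^2)\,dt=\pi\log(1+\gamma)$ (whose special case $\gamma=1$ gives $\pi\log 2$). Collecting terms and using $(\sqrt{1+r}+\sqrt{1-r})^2=2(1+\sqrt{1-r^2})$ together with the rationalization $1-\sqrt{1-r^2}=r^2/(1+\sqrt{1-r^2})$ yields $\Lambda_0^0(r)=-\pi\log\big(2(1-\sqrt{1-r^2})/r^2\big)$. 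The same substitution turns $\Lambda_1^0$ into $2\int_0^\infty \log((1+r)+(1-r)t^2)/((1+r)+(1-r)t^2)\,dt$ minus $2\int_0^\infty \log(1+t^2)/((1+r)+(1-r)t^2)\,dt$; the first is elementary after rescaling $t$, while the second is obtained by differentiating $\int_0^\infty \log(c^2+t^2)/(a+bt^2)\,dt$ in the parameter $c$ (a partial-fraction integral) and integrating back. The same radical identities then collapse the answer to $\Lambda_1^0(r)=\frac{\pi}{\sqrt{1-r^2}}\log\frac{2(1-r^2)(1-\sqrt{1-r^2})}{r^2}$, which is precisely $(\pi\log(1-r^2)-\Lambda_0^0(r))/\sqrt{1-r^2}$. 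These evaluations mirror the computations in \cite{ProTor2014}.

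With the closed forms in hand, the derivative formulas for $\Lambda_0^0$ and $\Lambda_1^0$ are elementary. For $\Lambda_0^0$ I would differentiate $-\pi\log(1-\sqrt{1-r^2})+2\pi\log r$ and rationalize the resulting $1-\sqrt{1-r^2}$ to reach $-\pi(1-\sqrt{1-r^2})/(r\sqrt{1-r^2})$; this agrees with the alternative route of differentiating under the integral and using $\int_0^\pi d\T/(1+r\cos\T)=\pi/\sqrt{1-r^2}$ after writing $\cos\T/(1+r\cos\T)=\tfrac1r(1-1/(1+r\cos\T))$. For $\Lambda_1^0$ I would apply the quotient rule to $(\pi\log(1-r^2)-\Lambda_0^0)/\sqrt{1-r^2}$: the derivative of the denominator $\sqrt{1-r^2}$ contributes exactly the self-referential term $r\Lambda_1^0/(1-r^2)$, and substituting the formula for $(\Lambda_0^0)'$ into the numerator and simplifying produces the stated $\frac{1}{1-r^2}\big(r\Lambda_1^0-(1+r^2-\sqrt{1-r^2})\pi/(r\sqrt{1-r^2})\big)$.

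The function $\Phi_0^0$ admits no elementary closed form, so here I would differentiate under the integral sign using the expression for $\p_r\phi$ from Lemma~\ref{lem 5.1}: the term $r\phi(r,\T)/(1-r^2)$ integrates to $r\Phi_0^0(r)/(1-r^2)$, while the remaining $\frac{1}{1-r^2}\int_0^\pi \sin\T/(1+r\cos\T)\,d\T$ is elementary via $u=1+r\cos\T$ and equals $\frac{1}{r(1-r^2)}\log\frac{1+r}{1-r}=(\lambda(r,0)-\lambda(-r,0))/(r(1-r^2))$; combining over the common denominator $r(1-r^2)$ gives the claimed formula. I expect the main obstacle to be the $\Lambda_1^0$ closed form, specifically the parameter-differentiation evaluation of $\int_0^\infty \log(1+t^2)/(a+bt^2)\,dt$ and the bookkeeping of the nested radicals $\sqrt{1-r^2}$; once that identity is secured, every derivative reduces to routine differentiation and algebraic simplification.
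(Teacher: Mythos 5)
Your proposal is correct: the closed forms obtained via the Weierstrass substitution and the classical evaluation $\int_0^\infty \log(t^2+\gamma^2)/(1+t^2)\,dt=\pi\log(1+\gamma)$ check out (the identity $2(1-\sqrt{1-r^2})/r^2=2/(1+\sqrt{1-r^2})$ reconciles your intermediate expressions with the forms printed in \eqref{eq:12a}), and the three derivative formulas follow exactly as you describe — differentiation under the integral sign with Lemma~\ref{lem 5.1} for $\Phi_0^{0}$, and direct differentiation of the closed forms (quotient rule for $\Lambda_1^{0}$, where the derivative of $\sqrt{1-r^2}$ indeed produces the self-referential term $r\Lambda_1^{0}/(1-r^2)$) for the other two. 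The paper prints no proof of this lemma, deferring to the computations of \cite{ProTor2014}, and your argument is essentially the standard route it implicitly intends, so there is nothing to flag.
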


The following results give recurrent formulas in terms of $k$ and $\ell$ for all the functions defined at the beginning of this section.

\begin{prop}\label{prop:7}
The functions $\mathcal{S}^{\alpha}_{k,\ell}$ and $\mathcal{C}^{\alpha}_{k,\ell},$ defined in~\eqref{eq:11}, write as
\begin{align*}
\mathcal{S}^{\alpha}_{k,\ell}(r,\theta)&=
\begin{cases}
0 & k=0, \ell\ge 0,\\
\big(\cos(k\alpha)-\cos(k(\T+\alpha)\big)k^{-1} & k\ge 1, \ell =0,\\[4pt]
\big(\lambda(r,\alpha)-\lambda(r,\T+\alpha)\big)r^{-1} & k=1,\ell=1,\\[4pt]
\dfrac{(1+r\cos(\T+\alpha))^{1-\ell}-(1+r\cos\alpha)^{1-\ell}}{r(\ell-1)} & k=1,\ell\ge 2,\\[4pt]
2\left(\mathcal{S}^{\alpha}_{k-1,\ell-1}(r,\theta)-
\mathcal{S}^{\alpha}_{k-1,\ell}(r,\theta)\right)r^{-1} - \mathcal{S}^{\alpha}_{k-2,\ell}(r,\theta)
& k\ge 2, \ell\ge 1,
\end{cases}
\\
\mathcal{C}^{\alpha}_{k,\ell}(r,\theta)&=
\begin{cases}
\theta& k=0, \ell=0,\\
\big(\sin(k(\T+\alpha))-\sin(k\alpha)\big)k^{-1}& k\ge 1,\ell=0,\\[4pt]
\phi(r,\alpha)-\phi(r,\theta+\alpha)+\T(1-r^2)^{-\frac{1}{2}} & k=0, \ell =1,\\[4pt]
\mathcal{C}^{\alpha}_{0,\ell-1}(r,\theta) +\dfrac{r}{\ell-1}\dfrac{\partial}{\partial
r}\mathcal{C}^{\alpha}_{0,\ell-1}(r,\theta)& k=0,\ell\ge 2,\\[4pt]
\left(\mathcal{C}^{\alpha}_{0,\ell-1}(r,\theta)
-\mathcal{C}^{\alpha}_{0,\ell}(r,\theta) \right)\,r^{-1}& k=1,\ell\ge 1,\\[4pt]
2\left(\mathcal{C}^{\alpha}_{k-1,\ell-1}(r,\theta)-
\mathcal{C}^{\alpha}_{k-1,\ell}(r,\theta)\right)\, r^{-1} - \mathcal{C}^{\alpha}_{k-2,\ell}(r,\theta)
& k\ge 2, \ell\ge 1,
\end{cases}
\end{align*}
when $r\neq0.$ Furthermore, $\mathcal{S}^{\alpha}_{k,\ell}(0,\theta)=\mathcal{S}^{\alpha}_{k,0}(r,\theta)$ and
$\mathcal{C}^{\alpha}_{k,\ell}(0,\theta)=\mathcal{C}^{\alpha}_{k,0}(r,\theta).$
\end{prop}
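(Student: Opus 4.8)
The plan is to establish every formula by a uniform bootstrap: first pin down a handful of base cases through explicit antiderivatives, and then propagate to all $(k,\ell)$ by means of two elementary reductions. The two reductions rest on a pair of identities that I would isolate at the outset. The first is the three-term trigonometric recurrence $\sin(k\psi)=2\cos\psi\,\sin((k-1)\psi)-\sin((k-2)\psi)$ together with its cosine analogue, which lowers the frequency index $k$. The second is the purely algebraic rewriting $r\cos\psi=(1+r\cos\psi)-1$, equivalently
\[
\frac{r\cos\psi}{(1+r\cos\psi)^{\ell}}=\frac{1}{(1+r\cos\psi)^{\ell-1}}-\frac{1}{(1+r\cos\psi)^{\ell}},
\]
which is what allows a factor $\cos\psi$ in the numerator to be traded for a drop in the exponent $\ell$. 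Combining the two produces the closed recurrences in the last line of each case.

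For the base cases I would argue as follows. The identity $\mathcal{S}^{\alpha}_{0,\ell}\equiv0$ is immediate since $\sin(0\cdot\psi)=0$. The rows with $\ell=0$ are elementary: the denominator is $1$, so one integrates $\sin(k\psi)$ and $\cos(k\psi)$ directly. For the two $k=1$ rows of $\mathcal{S}$ the key observation is that $\sin\psi$ is, up to a constant, the $\psi$-derivative of $(1+r\cos\psi)^{1-\ell}$; indeed $\frac{\partial}{\partial\psi}(1+r\cos\psi)^{1-\ell}=r(\ell-1)\sin\psi\,(1+r\cos\psi)^{-\ell}$, which yields the stated antiderivative when $\ell\ge2$, while for $\ell=1$ the primitive is $-r^{-1}\log(1+r\cos\psi)=-r^{-1}\lambda(r,\psi)$ with $\lambda$ as in \eqref{eq:12b}. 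Evaluating between $\alpha$ and $\alpha+\theta$ gives exactly the two displayed expressions.

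For $\mathcal{C}$ the recurrences in $\ell$ come from differentiating under the integral sign. Since $\frac{\partial}{\partial r}(1+r\cos\psi)^{1-\ell}=-(\ell-1)\cos\psi\,(1+r\cos\psi)^{-\ell}$, I have $\frac{\partial}{\partial r}\mathcal{C}^{\alpha}_{0,\ell-1}=-(\ell-1)\int_{\alpha}^{\alpha+\theta}\cos\psi\,(1+r\cos\psi)^{-\ell}d\psi$, and feeding the algebraic identity above turns $\int\cos\psi\,(1+r\cos\psi)^{-\ell}d\psi$ into $r^{-1}\bigl(\mathcal{C}^{\alpha}_{0,\ell-1}-\mathcal{C}^{\alpha}_{0,\ell}\bigr)$. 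Solving for $\mathcal{C}^{\alpha}_{0,\ell}$ gives the $k=0,\ \ell\ge2$ row, and reading the same relation the other way gives the $k=1,\ \ell\ge1$ row $\mathcal{C}^{\alpha}_{1,\ell}=r^{-1}\bigl(\mathcal{C}^{\alpha}_{0,\ell-1}-\mathcal{C}^{\alpha}_{0,\ell}\bigr)$. The $k\ge2$ rows for both $\mathcal{S}$ and $\mathcal{C}$ then follow by substituting the trigonometric recurrence into the integrand and using the algebraic identity on the resulting $2\cos\psi\,(\cdot)$ term: the factor $2\cos\psi$ splits the $(k-1)$-integral into an $(\ell-1)$-part and an $\ell$-part, producing $2r^{-1}\bigl(\mathcal{S}^{\alpha}_{k-1,\ell-1}-\mathcal{S}^{\alpha}_{k-1,\ell}\bigr)$, while the remaining term contributes $-\mathcal{S}^{\alpha}_{k-2,\ell}$, and likewise for $\mathcal{C}$. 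Finally, at $r=0$ the weight $(1+r\cos\psi)^{\ell}$ is identically $1$, so the integral is independent of $\ell$ and coincides with the $\ell=0$ value, which is the closing assertion; this separate treatment is needed precisely because the recurrences divide by $r$.

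I expect the single genuinely delicate step to be the row $\mathcal{C}^{\alpha}_{0,1}$, i.e. $\int(1+r\cos\psi)^{-1}d\psi$. Its classical primitive $\frac{2}{\sqrt{1-r^2}}\arctan\!\bigl(\sqrt{\tfrac{1-r}{1+r}}\tan(\psi/2)\bigr)$ is discontinuous at the odd multiples of $\pi$, so a naive evaluation between $\alpha$ and $\alpha+\theta$ is wrong whenever the interval crosses such a point. The function $\phi$ of \eqref{eq:12b} is engineered to absorb exactly this defect: $\phi(r,\theta)$ equals $\theta(1-r^2)^{-1/2}$ minus the above arctangent primitive, and the subtraction of the linear term makes it continuous and $2\pi$-periodic. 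The correct bookkeeping is what produces the stated form $\phi(r,\alpha)-\phi(r,\theta+\alpha)+\theta(1-r^2)^{-1/2}$, and I would justify it using the properties collected in Lemma~\ref{lem 5.1}, in particular $\phi(r,\pm\pi)=0$ and $\phi(-r,t+\pi)=\phi(r,t)$, to confirm that the branch is tracked consistently on $\theta\in[-\pi,\pi]$. Everything else reduces to the two mechanical identities above.
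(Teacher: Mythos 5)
Your proposal is correct and follows essentially the same route as the paper's proof: direct integration (with the substitution $u=1+r\cos\psi$, respectively the Weierstrass change $\tan(\psi/2)=\varphi$ defining $\phi$) for the base cases, differentiation under the integral sign in $r$ for the $\mathcal{C}^{\alpha}_{0,\ell}$ row, and the product-to-sum identity $2\sin((k-1)\psi)\cos\psi=\sin(k\psi)+\sin((k-2)\psi)$ combined with the splitting $r\cos\psi=(1+r\cos\psi)-1$ for the three-term recurrences, which is exactly the paper's computation read in the opposite direction. Your explicit attention to the branch discontinuity of the arctangent primitive at odd multiples of $\pi$, handled via the properties of $\phi$ in Lemma~\ref{lem 5.1}, is a point the paper passes over silently, and it is handled correctly.
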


\begin{proof}
The expressions of $\mathcal{S}^\alpha_{k,\ell}(r,\theta)$ for $k=0,1$ follow by direct integration. When $k\geq 2$ and $\ell\geq2,$ from its definition and by using elementary transformations, we get
\begin{equation}\label{demint}
\begin{aligned}
\mathcal{S}^\alpha_{k-1,\ell-1}(r,\theta)\!=\!&\int_{\alpha}^{\alpha\!+\!\theta}\frac{\sin((k\!-\!1)\psi)}{(1\!+\!r\cos\psi)^{\ell\!-\!1}}d\psi\!=\!\int_{\alpha}^{\alpha\!+\!\theta}\frac{\sin((k\!-\!1)\psi)(1\!+\!\cos\psi)}{(1\!+\!r\cos\psi)^{\ell}}d\psi\\
&\int_{\alpha}^{\alpha\!+\!\theta}\frac{\sin((k\!-\!1)\psi)}{(1\!+\!r\cos\psi)^{\ell}}d\psi + r\int_{\alpha}^{\alpha\!+\!\theta}\frac{\sin((k\!-\!1)\psi)\cos\psi}{(1\!+\!r\cos\psi)^{\ell}}d\psi.
\end{aligned}
\end{equation}
Using the identity $2\sin((k-1)\psi)\cos\psi=\sin(k\psi)+\sin((k-2)\psi)$ the above expression writes 
\[
\mathcal{S}^\alpha_{k-1,\ell-1}(r,\theta)=\mathcal{S}^\alpha_{k-1,\ell}(r,\theta)+\frac{1}{2}r\,\mathcal{S}^\alpha_{k,\ell}(r,\theta)+ \frac{1}{2}r\,\mathcal{S}^\alpha_{k-2,\ell}(r, \theta).
\]
Then, solving $\mathcal{S}^\alpha_{k,\ell}(r,\theta)$ in this expression we recover the one appearing in the statement.

\medskip

The expression for $\mathcal{C}^\alpha_{k,0}(r,\theta)$ follows by a direct integration, whereas $\mathcal{C}^\alpha_{0,1}$ follows from the definition of $\phi$ given in \eqref{eq:12b} and the change of variables $\tan(\psi/2)=\varphi.$ The expression of $\mathcal{C}^\alpha_{0,\ell}(r,\theta),$ for $\ell\geq2,$ follows deriving with respect to $r.$ The expression for $\mathcal{C}^\alpha_{k,\ell}(r, \theta)$ can be obtained analogously to $\mathcal{S}^\alpha_{k,\ell}(r, \theta).$
\end{proof}

The next corollary follows straightaway by evaluating $\theta=\pi$ in the last result.

\begin{cor}\label{cor:8}
The functions ${s}_{k,\ell}^{\alpha}$ and ${c}_{k,\ell}^{\alpha}$ defined in~\eqref{eq:11} write as
\begin{align*}
s_{k,\ell}^{\alpha}(r)&=
\begin{cases}
0 & k=0,\ell\geq0,\\[4pt]
\left(\lambda(r,\alpha)-\lambda(r,\alpha+\pi)\right)r^{-1}& k=1,\ell= 1,\\[4pt]
\dfrac{(1-(-1)^k)}{k}\cos(k\alpha)& k\ge 1,\ell = 0,\\[4pt]
\dfrac{(1+r\cos \alpha)^{1-\ell}-(1-r\cos \alpha)^{1-\ell}}{r(1-\ell)}& k=1,\ell \ge 2,\\[4pt]
2\,\big(s^{\alpha}_{k-1,\ell-1}(r)-s^{\alpha}_{k-1,\ell}(r)\big)\, r^{-1} - s^{\alpha}_{k-2,\ell}(r)& k\ge 2,\ell \ge 1,
\end{cases}
\\
c_{k,\ell}^{\alpha}(r)&=
\begin{cases}
\pi & k=0,\ell=0,\\[4pt]
\phi(r,\alpha)-\phi(r,\alpha+\pi)+\dfrac{\pi}{\sqrt{1-r^2}}& k=0,\ell=1,\\[4pt]
\dfrac{r}{\ell-1}\dfrac{d}{dr}c^{\alpha}_{0,\ell-1}(r)+c^{\alpha}_{0,\ell-1}(r) & k=0,\ell\geq 2,\\[4pt]
\dfrac{-1+(-1)^k}{k}\sin(k\alpha)& k\ge 1,\ell = 0,\\[4pt]
\big(c^{\alpha}_{0,\ell-1}(r)-c^{\alpha}_{0,\ell}(r)\big)\, r^{-1}& k=1,\ell \ge 1,\\[4pt]
2\,\big(c^{\alpha}_{k-1,\ell-1}(r)-c^{\alpha}_{k-1,\ell}(r)\big)\, r^{-1} - c^{\alpha}_{k-2,\ell}(r)& k\ge 2,\ell \ge 1,\\[4pt]
\end{cases}
\end{align*}
when $r\neq0.$ Furthermore, $s_{0,\ell}^{\alpha}(0)=0,$ $c_{0,\ell}^{\alpha}(0)=\pi$ for $\ell\ge 0$ and 
\[
s_{k,\ell}^{\alpha}(0)=\dfrac{1-(-1)^k}{k}\cos(k\alpha), \quad c_{k,\ell}^{\alpha}(0)=\dfrac{-1+(-1)^k}{k}\sin(k\alpha),
\]
for $k\ge 1$ and $\ell\ge 0.$
\end{cor}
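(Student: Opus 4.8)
The plan is to recognize that $s_{k,\ell}^{\alpha}$ and $c_{k,\ell}^{\alpha}$ are exactly the evaluations at $\theta=\pi$ of the functions treated in Proposition~\ref{prop:7}. Comparing the integral in the definition of $s^{\alpha}_{k,\ell}(r)$ in \eqref{eq:11} with that of $\mathcal{S}^{\alpha}_{k,\ell}(r,\theta)$, and likewise for the cosine versions, one sees that $s^{\alpha}_{k,\ell}(r)=\mathcal{S}^{\alpha}_{k,\ell}(r,\pi)$ and $c^{\alpha}_{k,\ell}(r)=\mathcal{C}^{\alpha}_{k,\ell}(r,\pi)$ up to renaming the dummy variable. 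Hence every case of the recurrence in Proposition~\ref{prop:7} descends to a corresponding case for $s^{\alpha}_{k,\ell}$ and $c^{\alpha}_{k,\ell}$, and the whole task reduces to performing the substitution $\theta=\pi$ and simplifying.

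First I would treat the explicit (non-recursive) cases, where the only tools needed are the elementary identities $\cos\big(k(\pi+\alpha)\big)=(-1)^{k}\cos(k\alpha)$, $\sin\big(k(\pi+\alpha)\big)=(-1)^{k}\sin(k\alpha)$, and $\cos(\pi+\alpha)=-\cos\alpha$. For example, the case $k\ge 1$, $\ell=0$ of $\mathcal{S}^{\alpha}_{k,\ell}$ becomes $\big(\cos(k\alpha)-\cos(k(\pi+\alpha))\big)k^{-1}=\frac{1-(-1)^{k}}{k}\cos(k\alpha)$; the case $k=1$, $\ell\ge 2$ uses $\cos(\pi+\alpha)=-\cos\alpha$ to replace $1+r\cos(\pi+\alpha)$ by $1-r\cos\alpha$, with the passage from $\frac{1}{\ell-1}$ to $\frac{1}{1-\ell}$ absorbed into one sign flip; and the $\phi$-dependent case $k=0$, $\ell=1$ of $\mathcal{C}^{\alpha}_{k,\ell}$ picks up the summand $\pi/\sqrt{1-r^{2}}$ from the term $\theta(1-r^{2})^{-1/2}$ at $\theta=\pi$. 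The recursive cases ($k\ge 2$, or $k=1$ with $\ell\ge 1$, or $k=0$ with $\ell\ge 2$) are immediate: evaluating both sides of the recurrences for $\mathcal{S}^{\alpha}_{k,\ell}$ and $\mathcal{C}^{\alpha}_{k,\ell}$ at $\theta=\pi$ turns them verbatim into the claimed recurrences for $s^{\alpha}_{k,\ell}$ and $c^{\alpha}_{k,\ell}$.

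Finally I would establish the values at $r=0$. These follow most cleanly from the last sentence of Proposition~\ref{prop:7}, namely $\mathcal{S}^{\alpha}_{k,\ell}(0,\theta)=\mathcal{S}^{\alpha}_{k,0}(r,\theta)$ and $\mathcal{C}^{\alpha}_{k,\ell}(0,\theta)=\mathcal{C}^{\alpha}_{k,0}(r,\theta)$: at $r=0$ the dependence on $\ell$ collapses (the $\ell=0$ expressions are $r$-independent), so $s^{\alpha}_{k,\ell}(0)$ and $c^{\alpha}_{k,\ell}(0)$ coincide with the already-computed $\ell=0$ formulas at $\theta=\pi$. Equivalently, at $r=0$ the integrands lose their denominators and one integrates $\sin(k\T)$ and $\cos(k\T)$ over $[\alpha,\alpha+\pi]$ directly, giving $0$ and $\pi$ for $k=0$, and $\frac{1-(-1)^{k}}{k}\cos(k\alpha)$ and $\frac{-1+(-1)^{k}}{k}\sin(k\alpha)$ for $k\ge 1$.

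I do not expect a genuine obstacle here: this is a corollary precisely because all the analytic content already resides in Proposition~\ref{prop:7}. The only place demanding care is the bookkeeping of signs in the two explicit cases, especially the $k=1$, $\ell\ge 2$ case for $s^{\alpha}_{k,\ell}$ and the parity factors $(-1)^{k}$ throughout, where matching the exact printed form requires attention to the $\frac{1}{\ell-1}$ versus $\frac{1}{1-\ell}$ convention and to the sign in $\cos(\pi+\alpha)=-\cos\alpha$.
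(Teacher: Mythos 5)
Your proposal is correct and coincides with the paper's own proof, which states precisely that the corollary ``follows straightaway by evaluating $\theta=\pi$'' in Proposition~\ref{prop:7}. Your verification of the sign bookkeeping (the parity factors $(-1)^k$, the passage from $\frac{1}{\ell-1}$ to $\frac{1}{1-\ell}$, and the $r=0$ values via the collapse of the $\ell$-dependence) fills in exactly the routine details the paper leaves implicit.
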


\begin{prop}\label{prop:9l}
The functions $s^{\lambda}_{k,\ell}$ and $c^{\lambda}_{k,\ell},$ defined in~\eqref{eq:11}, write as
\begin{align*}
s^{\lambda}_{k,\ell}(r)&=
\begin{cases}
0& k=0,\ell\geq0,\\[4pt]
\dfrac{\lambda(r,0)-(-1)^k\lambda(-r,0)}{k}+\dfrac{r}{2k}\left(s_{k-1,1}(r)-s_{k+1,1}(r)\right)&
k\geq1,\ell=0,\\[4pt]
\dfrac{\lambda(r,0)^2-\lambda(-r,0)^2}{2r}&
k=1,\ell= 1,\\[4pt]
\dfrac{1}{r(\ell-1)}\Big(\dfrac{\lambda(-r,0)+(\ell-1)^{-1}}{(1-r)^{\ell-1}}-\dfrac{\lambda(r,0)+(\ell-1)^{-1}}{(1+r)^{\ell-1}}\Big)&
k=1,\ell\ge 2,\\[4pt]
2\big(s^{\lambda}_{k-1,\ell-1}(r)-s^{\lambda}_{k-1,\ell}(r)\big)\,r^{-1}-s^
{\lambda}_{k-2,\ell}(r)& k\ge 2,\ell \ge 1,
\end{cases}\\[4pt]
c^{\lambda}_{k,\ell}(r)&=
\begin{cases}
\Lambda_{\ell}^{0}(r)&
k=0,\ell= 0,1,\\[4pt]
\dfrac{r}{\ell-1}\left(\dfrac{d}{dr}\,c^{\lambda}_{0,\ell-1}(r)-c_{1,\ell}
(r)\right)+c^{\lambda}_{0,\ell-1}(r)& k=0,\ell\ge 2,\\[4pt]
\dfrac{r}{2k}\left(c_{k-1,1}(r)-c_{k+1,1}(r)\right)& k\ge
1,\ell=0,\\[4pt]
\left(c^{\lambda}_{0,\ell-1}(r)-c^{\lambda}_{0,\ell}(r)\right)\,r^{-1}&
k=1,\ell\ge 1,\\[4pt]
2\left(c^{\lambda}_{k-1,\ell-1}(r)-c^{\lambda}_{k-1,\ell}(r)\right)\,r^{-1}-c^{\lambda}_{k-2,\ell}(r)& k\ge 2,\ell \ge 1,
\end{cases}
\end{align*}
when $r\neq0.$ Furthermore, $s^{\lambda}_{k,\ell}(0)=s_{k,\ell}(0)$ and $c^{\lambda}_{k,\ell}(0)=0$ for $k\ge 0$ and $\ell\ge 0.$
\end{prop}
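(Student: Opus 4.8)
The plan is to follow the same scheme as in Proposition~\ref{prop:7} and Corollary~\ref{cor:8}, taking advantage of the fact that the weight $\lambda(r,\T)=\log(1+r\cos\T)$ simply rides along through the index-lowering manipulations and enters the analysis only through the derivatives recorded in Lemma~\ref{lem 5.1}. The two algebraic devices are the power-splitting identity $(1+r\cos\T)^{-(\ell-1)}=(1+r\cos\T)(1+r\cos\T)^{-\ell}$ and the product-to-sum formulas $2\sin((k-1)\T)\cos\T=\sin(k\T)+\sin((k-2)\T)$ and $2\cos((k-1)\T)\cos\T=\cos(k\T)+\cos((k-2)\T)$. Inserting them exactly as in the computation leading to \eqref{demint}, with the factor $\lambda(r,\T)$ carried unchanged, yields at once the three-term recurrences for $k\ge2,\ \ell\ge1$ and, in the cosine family with $k=1$, the relation $c^{\lambda}_{1,\ell}=(c^{\lambda}_{0,\ell-1}-c^{\lambda}_{0,\ell})/r$ (here $\cos\T$ already has the right frequency, so no product-to-sum step is needed). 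The cases $k=0$ are immediate: $s^{\lambda}_{0,\ell}\equiv0$ because $\sin0=0$, while $c^{\lambda}_{0,0}$ and $c^{\lambda}_{0,1}$ are, by definition, the functions $\Lambda_0^{0}$ and $\Lambda_1^{0}$ of \eqref{eq:12a}.

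For the explicit closed forms I would treat the rows $k=1$ of the sine family by the substitution $u=1+r\cos\T$, under which $\sin\T\,d\T=-du/r$ and the endpoints $\T=0,\pi$ become $u=1+r,\,1-r$. This converts $s^{\lambda}_{1,\ell}(r)$ into $r^{-1}\int_{1-r}^{1+r}u^{-\ell}\log u\,du$; one integration by parts of $\int u^{-\ell}\log u\,du$ produces the stated formula for $\ell\ge2$, the constants $(\ell-1)^{-1}$ and $(\ell-1)^{-2}$ appearing naturally, whereas for $\ell=1$ the same change of variable gives $\tfrac{1}{2r}\big[(\log u)^2\big]_{1-r}^{1+r}$, which equals $(\lambda(r,0)^2-\lambda(-r,0)^2)/(2r)$ upon recognizing $\lambda(\pm r,0)=\log(1\pm r)$. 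For the rows $\ell=0$ with $k\ge1$ I would instead integrate by parts moving the derivative onto $\lambda$: with $dU=\p_\T\log(1+r\cos\T)\,d\T=-r\sin\T\,(1+r\cos\T)^{-1}d\T$ and $V=-\cos(k\T)/k$ (resp.\ $V=\sin(k\T)/k$), the boundary term collapses to $(\lambda(r,0)-(-1)^k\lambda(-r,0))/k$ in the sine case and vanishes in the cosine case, and the leftover integrals reduce, through the product-to-sum identities, to the base integrals $s_{k\pm1,1}$ and $c_{k\pm1,1}$ of Corollary~\ref{cor:8}.

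The single genuinely coupled step, and the one where I expect to spend the most care, is the row $k=0,\ \ell\ge2$ of the cosine family. Here I would differentiate $c^{\lambda}_{0,\ell-1}(r)=\int_{0}^{\pi}\lambda(r,\T)(1+r\cos\T)^{-(\ell-1)}d\T$ under the integral sign; by $\p_r\lambda(r,\T)=\cos\T/(1+r\cos\T)$ from Lemma~\ref{lem 5.1} this splits into the base integral $c_{1,\ell}(r)$ plus the term $-(\ell-1)\,c^{\lambda}_{1,\ell}(r)$, and substituting the already-proven identity $c^{\lambda}_{1,\ell}=(c^{\lambda}_{0,\ell-1}-c^{\lambda}_{0,\ell})/r$ and solving for $c^{\lambda}_{0,\ell}$ returns exactly the stated expression. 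The delicate points are the correct accounting of the factor $\ell-1$ coming from differentiating the power and the consistency of the two appearances of the $k=1$ relation; past these the computation is routine bookkeeping. Finally, the boundary values at $r=0$ are obtained by substituting $r=0$ directly into \eqref{eq:11}: since $\lambda(0,\T)=\log1\equiv0$ every integrand vanishes, which yields the listed values at the origin (and one checks, for instance, that the numerator of the explicit $s^{\lambda}_{1,1}$ is $O(r^3)$, so the $r\neq0$ formula has the same limit). This disposes of every case of the statement.
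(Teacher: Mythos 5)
Your proposal is correct and follows essentially the same route as the paper, whose own proof is only a sketch (``the general expression for $k\ge2,\ell\ge1$ follows similarly to \eqref{demint}; the other cases follow straightforward'' with integration by parts). Your derivations all check out for $r\neq0$: the three-term recurrences obtained by carrying the factor $\lambda$ unchanged through the manipulation of \eqref{demint}; the splitting $\cos\T=(({1+r\cos\T})-1)/r$ for the $k=1$ cosine row; the substitution $u=1+r\cos\T$ giving $s^{\lambda}_{1,\ell}(r)=r^{-1}\int_{1-r}^{1+r}u^{-\ell}\log u\,du$ and hence the stated closed forms for $\ell=1$ and $\ell\ge2$; integration by parts with $\p_\T\lambda(r,\T)=-r\sin\T/(1+r\cos\T)$ for the rows $\ell=0$ (where the paper instead invokes parity of $c^{\lambda}_{k,\ell}$ to kill the boundary term you compute directly — an immaterial variation); and differentiation under the integral sign via Lemma~\ref{lem 5.1}, combined with the already-established $k=1$ relation, for $c^{\lambda}_{0,\ell}$, $\ell\ge2$, exactly as the paper handles $\mathcal{C}^{\alpha}_{0,\ell}$ in Proposition~\ref{prop:7}.

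One point must be flagged, although it concerns the statement more than your argument. Your evaluation at $r=0$ is correct: since $\lambda(0,\T)\equiv0$, every integrand in \eqref{eq:11} vanishes, so $s^{\lambda}_{k,\ell}(0)=c^{\lambda}_{k,\ell}(0)=0$ for all $k,\ell$, consistent with the limits of the displayed formulas (as your $O(r^3)$ check for $s^{\lambda}_{1,1}$ illustrates, and as one sees from $s^{\lambda}_{1,0}(r)=\log(1-r^2)-\tfrac{r}{2}s_{2,1}(r)\to0$). But this does \emph{not} ``yield the listed values'': the proposition asserts $s^{\lambda}_{k,\ell}(0)=s_{k,\ell}(0)$, and by Corollary~\ref{cor:8} with $\alpha=0$ one has $s_{k,\ell}(0)=(1-(-1)^k)/k=2/k\neq0$ for odd $k\ge1$. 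So for odd $k$ the ``Furthermore'' clause as printed is incompatible with what you correctly proved; it is evidently a slip in the paper (the correct value being $0$, which the clause gives only for even $k$), and your write-up should record this discrepancy rather than assert agreement where there is none.
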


\begin{proof}
The general expression for $k\ge,\ell\ge1$ follows similarly to \eqref{demint}. The other cases follow straightforward. In some of them, the integration by parts rule is necessary and also the fact that $c^{\lambda}_{k,\ell}$ is an even function.
\end{proof}

\begin{prop}\label{prop:10}
The functions $s^{\phi}_{k,\ell}$ and $c^{\phi}_{k,\ell},$ defined in~\eqref{eq:11}, write as
\begin{align*}
s^{\phi}_{k,\ell}(r)&=
\begin{cases}
0& k=0,\ell\ge 0,\\[4pt]
-\dfrac{c_{k,1}(r)}{k}
& k\ge 1,\ell=0,\\[4pt]
\dfrac{\Lambda^{0}_0(r)}{r\sqrt{1-r^2}}-\dfrac{\Lambda^{0}_1(r)}{r}
&k=1,\ell=1,\\[4pt]
\dfrac{1}{r(\ell-1)}\Big(c_{0,\ell}-\dfrac{c_{0,\ell-1}}{\sqrt{1-r^2}}\Big)
& k=1,\ell \ge 2,\\[4pt]
2\left(s^{\phi}_{k-1,\ell-1}(r)-s^{\phi}_{k-1,\ell}(r)\right) \, r^{-1}-s^{\phi}_{k-2,\ell}(r)& k\ge
2,\ell \ge 1,
\end{cases}\\[4pt]
c^{\phi}_{k,\ell}(r)&=
\begin{cases}
\Phi_0^{0}(r)&
k=0,\ell= 0,\\[4pt]
\dfrac{c_{0,1}^{\T}(r)}{\sqrt{1-r^2}} -\dfrac{1}{2}\left(\dfrac{\pi}{\sqrt{1-r^2}}\right)^2
& k=0,\ell=1,\\[4pt]
\dfrac{r}{\ell-1}\left(\dfrac{d}{dr}c_{0,\ell-1}^{\phi}(r)-\dfrac{s_{1,\ell}(r)}{1-r^2}\right)+\dfrac{\ell(1-r^2)-1}{(\ell-1)(1-r^2)}c_{0,\ell-1}^{\phi}(r)& k=0,\ell\geq2,\\
\dfrac{s_{k,1}(r)}{k}+\dfrac{(-1)^k-1}{k^2\sqrt{1-r^2}}& k\geq 1,\ell= 0,\\[4pt]
\left(c^{\phi}_{0,\ell-1}(r)-c^{\phi}_{0,\ell}(r)\right)\,r^{-1}&
k=1,\ell\ge 1,\\[4pt]
2\left(c^{\phi}_{k-1,\ell-1}(r)-c^{\phi}_{k-1,\ell}(r)\right)\,r^{-1}-c^
{\phi}_{k-2,\ell}(r)& k\ge 2,\ell \ge 1,
\end{cases}
\end{align*}
when $r\neq0.$ Furthermore, $s^{\phi}_{k,\ell}(0)=c^{\phi}_{k,\ell}(0)=0.$
\end{prop}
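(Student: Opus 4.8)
The plan is to derive each branch of the two piecewise formulas by the same elementary toolkit used in Propositions~\ref{prop:7} and \ref{prop:9l}, leaning on three facts about $\phi.$ First, the boundary values $\phi(r,0)=\phi(r,\pi)=0$ from Lemma~\ref{lem 5.1}. Second, the angular derivative $\p_\T\phi(r,\T)=(1-r^2)^{-1/2}-(1+r\cos\T)^{-1},$ which I read off by differentiating in $\T$ the identity $\mathcal{C}^{0}_{0,1}(r,\T)=-\phi(r,\T)+\T(1-r^2)^{-1/2}$ from Proposition~\ref{prop:7}. Third, the radial derivative $\p_r\phi$ recorded in Lemma~\ref{lem 5.1}. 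Alongside these I will repeatedly use the reduction $\cos\T=r^{-1}\big((1+r\cos\T)-1\big)$ to lower $\ell$ by one, together with the product-to-sum identities $2\sin((k-1)\T)\cos\T=\sin(k\T)+\sin((k-2)\T)$ and $2\cos((k-1)\T)\cos\T=\cos(k\T)+\cos((k-2)\T)$ to lower $k.$

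I would first dispose of the trivial and definitional rows: $s^{\phi}_{0,\ell}\equiv0$ because $\sin 0=0,$ while $c^{\phi}_{0,0}=\Phi_0^{0}$ is exactly \eqref{eq:12a}; the value at $r=0$ vanishes throughout since $\phi(0,\T)\equiv0.$ The frequency-lowering recurrences $(k\ge2)$ I would obtain by copying the computation \eqref{demint} with the extra weight $\phi(r,\T)$ in each integrand: expanding $s^{\phi}_{k-1,\ell-1}$ through the factor $(1+r\cos\T)$ and applying the product-to-sum identity gives $s^{\phi}_{k-1,\ell-1}=s^{\phi}_{k-1,\ell}+\tfrac{r}{2}(s^{\phi}_{k,\ell}+s^{\phi}_{k-2,\ell}),$ and solving for $s^{\phi}_{k,\ell}$ returns the stated line; the $c^{\phi}$ recurrence is identical. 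The $k=1$ rows are the same $\cos\T$ reduction applied once.

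The base rows with $\ell\le1$ I would treat by integration by parts, the boundary term vanishing each time through $\phi(r,0)=\phi(r,\pi)=0.$ For $s^{\phi}_{k,0}$ and $c^{\phi}_{k,0}$ with $k\ge1,$ integrating by parts moves the derivative onto $\phi$ and, after substituting $\p_\T\phi,$ splits the result into an elementary term (the $(1-r^2)^{-1/2}$ contribution) and a single copy of $s_{k,1}$ or $c_{k,1}.$ For $s^{\phi}_{1,1}$ and $s^{\phi}_{1,\ell}$ $(\ell\ge2)$ I would instead recognise $\sin\T\,(1+r\cos\T)^{-\ell}$ as a $\T$-derivative (of $\lambda$ when $\ell=1,$ of $(1+r\cos\T)^{1-\ell}/(r(\ell-1))$ when $\ell\ge2$) and integrate by parts, producing the combinations of $\Lambda_0^{0},\Lambda_1^{0}$ respectively $c_{0,\ell-1},c_{0,\ell}$ appearing in the statement. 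For $c^{\phi}_{0,1}$ I would substitute $\phi(r,\T)=\T(1-r^2)^{-1/2}-\mathcal{C}^{0}_{0,1}(r,\T)$ and write $g=\mathcal{C}^{0}_{0,1}$ so that $(1+r\cos\T)^{-1}=g'(\T);$ then the first term integrates to $c_{0,1}^{\T}/\sqrt{1-r^2}$ and the second equals $-\int_0^\pi g(\T)g'(\T)\,d\T=-\tfrac12 g(\pi)^2=-\tfrac12\big(\pi/\sqrt{1-r^2}\big)^2,$ which is the stated identity.

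The one genuinely delicate step is the index-raising recurrence for $c^{\phi}_{0,\ell}$ with $\ell\ge2.$ Here I would differentiate $c^{\phi}_{0,\ell-1}(r)=\int_0^\pi\phi\,(1+r\cos\T)^{1-\ell}\,d\T$ in $r,$ inserting $\p_r\phi=\tfrac{r\phi}{1-r^2}+\tfrac{\sin\T}{(1-r^2)(1+r\cos\T)}$ from Lemma~\ref{lem 5.1} and $\p_r(1+r\cos\T)^{1-\ell}=(1-\ell)\cos\T\,(1+r\cos\T)^{-\ell}.$ This produces three integrals, namely $\tfrac{r}{1-r^2}c^{\phi}_{0,\ell-1},$ $\tfrac{1}{1-r^2}s_{1,\ell},$ and $(1-\ell)\int_0^\pi\phi\cos\T\,(1+r\cos\T)^{-\ell}\,d\T;$ reducing the last with $\cos\T=r^{-1}((1+r\cos\T)-1)$ turns it into $\tfrac{1-\ell}{r}(c^{\phi}_{0,\ell-1}-c^{\phi}_{0,\ell}),$ and solving the resulting linear relation for $c^{\phi}_{0,\ell}$ reproduces both the coefficient $\tfrac{\ell(1-r^2)-1}{(\ell-1)(1-r^2)}$ and the $\tfrac{r}{\ell-1}\big(\tfrac{d}{dr}c^{\phi}_{0,\ell-1}-s_{1,\ell}/(1-r^2)\big)$ part of the statement. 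Correctly collecting the $(1-r^2)$ factors in this final solve is the only point demanding real care; everything else is mechanical.
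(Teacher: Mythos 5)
Your proposal is correct and follows essentially the same route as the paper's own (very terse) proof: integration by parts with the boundary terms killed by $\phi(r,0)=\phi(r,\pi)=0$ for the $\ell\le1$ rows, the product-to-sum recurrence of \eqref{demint} for lowering $k$, and differentiation of $c^{\phi}_{0,\ell-1}$ with respect to $r$ (via Lemma~\ref{lem 5.1} and the $\cos\T=r^{-1}((1+r\cos\T)-1)$ reduction) for the $k=0,\ell\ge2$ case, which you carry out correctly including the coefficient $\frac{\ell(1-r^2)-1}{(\ell-1)(1-r^2)}.$ The only deviation is that the paper obtains the $s^{\phi}_{k,\ell}$ entries by evenness together with the results of \cite{ProTor2014}, whereas you derive them directly by the same integration-by-parts technique --- a self-contained filling-in of details rather than a genuinely different method.
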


\begin{proof} 
The expression of $s^{\phi}_{k,\ell}$ follows from the fact that it is an even function and the results in \cite{ProTor2014}. For $k=1,\ell\ge1$ and $k\ge2,\ell\ge1,$ proceeding analogously to the previous proofs, we get $c^{\phi}_{k,\ell}.$ Finally, for $k=0,\ell\ge2,$ we compute directly the derivative, with respect to $r,$ of $c^{\phi}_{0,\ell-1}.$ The other cases follow using the integration by parts rule.
\end{proof}

The last result follows similarly as all the previous results.

\begin{prop}\label{prop:9t}
The functions $s^{\T}_{k,\ell}$ and $c^{\T}_{k,\ell}$ defined in~\eqref{eq:11} write as
\begin{align*}
s^{\T}_{k,\ell}(r)&=
\begin{cases}
0& k=0,\ell\geq0,\\[4pt]
\dfrac{-(-1)^k\pi}k&
k\geq1,\ell=0,\\[4pt]
\big(-\pi\lambda(-r,0)+\Lambda_0^{0}(r)\big)r^{-1}&
k=1,\ell= 1,\\[4pt]
\dfrac{1}{r(\ell-1)}\left(\dfrac{\pi}{(1-r)^{\ell-1}}-c_{0,\ell-1}(r)\right)&
k=1,\ell\ge 2,\\[4pt]
2\Big(s^{\T}_{k-1,\ell-1}(r)-s^{\T}_{k-1,\ell}(r)\Big)\,r^{-1}-s^
{\T}_{k-2,\ell}(r)& k\ge 2,\ell \ge 1,
\end{cases}
\\[4pt]
c^{\T}_{k,\ell}(r)&=
\begin{cases}
\dfrac{\pi^2}{2}&
k=0,\ell= 0,\\[7pt]
\Phi_0^{0}(r)+\dfrac{\pi^2}{2\sqrt{1-r^2}}&k=0,\ell=1,\\[7pt]
\dfrac{r}{\ell-1}\dfrac{d}{dr}c^{\T}_{0,\ell-1}(r)+c^{\T}_{0,\ell-1}(r)& k=0 ,\ell\geq2,\\[4pt]
\dfrac{-1+(-1)^k}{k^2}& k\geq1,\ell=0,\\[4pt]
\Big(c^{\T}_{0,\ell-1}(r)-c^{\T}_{0,\ell}(r)\Big)\,r^{-1}& k=1,\ell\ge 1,\\[4pt]
2\Big(c^{\T}_{k-1,\ell-1}(r)-c^{\T}_{k-1,\ell}(r)\Big)\,r^{-1}-c^{\T}_{k-2,\ell}(r)& k\ge 2,\ell \ge 1,
\end{cases}
\end{align*}
when $r\neq0.$ Furthermore, $s^{\T}_{0,\ell}(0)=0,$ $c^{\T}_{0,\ell}(0)=\pi^2/2$ for $\ell\ge 0$ and
\begin{align*}
s^{\T}_{k,\ell}(0)=\dfrac{-(-1)^k\pi}{k}, \quad
c^{\T}_{k,\ell}(0)=\dfrac{-1+(-1)^k}{k^2}.
\end{align*}
for $k\ge 1$ and $\ell\ge 0.$
\end{prop}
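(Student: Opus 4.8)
The plan is to follow verbatim the strategy of Propositions~\ref{prop:7}, \ref{prop:9l}, and \ref{prop:10}: every family in~\eqref{eq:11} obeys the same three-term recurrence in $k$ together with an $\ell$-recurrence in the first row, all forced by the algebra of the kernel $(1+r\cos\T)^{-\ell}$, so only a handful of anchoring cases genuinely require work. First I would dispose of the elementary rows. For $\ell=0$, a single integration by parts of $\int_0^\pi\T\sin(k\T)\,d\T$ and $\int_0^\pi\T\cos(k\T)\,d\T$ against $-\cos(k\T)/k$ and $\sin(k\T)/k$, evaluated at $\T=0,\pi$, yields $s^\T_{k,0}=-(-1)^k\pi/k$ and $c^\T_{k,0}=(-1+(-1)^k)/k^2$; moreover $s^\T_{0,\ell}=0$ and $c^\T_{0,0}=\int_0^\pi\T\,d\T=\pi^2/2$ are immediate. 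The general recurrences for $k\ge2$ are then obtained exactly as in~\eqref{demint}: writing $s^\T_{k-1,\ell-1}=\int_0^\pi\T\sin((k-1)\T)(1+r\cos\T)(1+r\cos\T)^{-\ell}\,d\T$, splitting the extra factor and applying $2\sin((k-1)\T)\cos\T=\sin(k\T)+\sin((k-2)\T)$ gives $s^\T_{k-1,\ell-1}=s^\T_{k-1,\ell}+\tfrac r2(s^\T_{k,\ell}+s^\T_{k-2,\ell})$, and solving for $s^\T_{k,\ell}$ produces the last line of the table; the twin identity $2\cos((k-1)\T)\cos\T=\cos(k\T)+\cos((k-2)\T)$ gives the $c^\T$ recurrence. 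The cosine case $k=1$, $\ell\ge1$ comes from the same splitting applied to $\cos\T(1+r\cos\T)^{-\ell}=r^{-1}\big((1+r\cos\T)^{-(\ell-1)}-(1+r\cos\T)^{-\ell}\big)$, and the first-row cosine case $k=0$, $\ell\ge2$ from differentiating $c^\T_{0,\ell-1}$ in $r$ and rewriting the outcome through the same algebraic identity.

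The delicate part is the row $\ell=1$, where the weight $\T$ forces the special functions of Lemma~\ref{lem 5.1a} to appear. For $s^\T_{1,1}$ I would use $\sin\T/(1+r\cos\T)=-r^{-1}\p_\T\lambda(r,\T)$, integrate by parts, and collect the boundary contribution $\pi\lambda(r,\pi)=\pi\lambda(-r,0)$ together with $\int_0^\pi\lambda(r,\T)\,d\T=\Lambda_0^0(r)$ to reach $s^\T_{1,1}=(-\pi\lambda(-r,0)+\Lambda_0^0(r))r^{-1}$; for $s^\T_{1,\ell}$ with $\ell\ge2$, the same device with $\sin\T(1+r\cos\T)^{-\ell}=((\ell-1)r)^{-1}\p_\T(1+r\cos\T)^{-(\ell-1)}$ produces the boundary term $\pi/(1-r)^{\ell-1}$ and the bulk term $c_{0,\ell-1}$. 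For $c^\T_{0,1}$ I would integrate by parts against the antiderivative $G(\T)=\T/\sqrt{1-r^2}-\phi(r,\T)$ of $(1+r\cos\T)^{-1}$ supplied by Proposition~\ref{prop:7}; using $\phi(r,\pi)=0$ from Lemma~\ref{lem 5.1} for the boundary and recognizing $\int_0^\pi\phi(r,\T)\,d\T=\Phi_0^0(r)$ in the bulk yields $c^\T_{0,1}=\Phi_0^0(r)+\tfrac{\pi^2}{2}(1-r^2)^{-1/2}$.

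The main obstacle is precisely this bookkeeping of boundary terms in the $\ell=1$ anchoring cases: the answers are only meaningful because $\phi(r,0)=\phi(r,\pi)=0$ and $\lambda(r,\pi)=\lambda(-r,0)$ make the endpoint contributions collapse to the clean expressions above, and a sign or endpoint slip there would corrupt the entire table through the recurrences. Once these cases are fixed, everything else is purely mechanical. Finally, the values at $r=0$ follow by setting the denominator to $1$: one has $c^\T_{0,\ell}(0)=\int_0^\pi\T\,d\T=\pi^2/2$ and $s^\T_{0,\ell}(0)=0$, while for $k\ge1$ the $r=0$ evaluations coincide with the $\ell=0$ formulas, giving $s^\T_{k,\ell}(0)=-(-1)^k\pi/k$ and $c^\T_{k,\ell}(0)=(-1+(-1)^k)/k^2$, which closes the proof.
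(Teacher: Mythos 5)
Your proposal is correct and follows exactly the route the paper intends: the paper's own ``proof'' is the single sentence that the result ``follows similarly as all the previous results,'' and your anchor-case computations (integration by parts producing the boundary terms $\pi\lambda(-r,0)$, $\pi/(1-r)^{\ell-1}$, and $\pi G(\pi)$, with $\phi(r,\pi)=0$ and $\lambda(r,\pi)=\lambda(-r,0)$ collapsing them) together with the $(1+r\cos\T)$-splitting and product-to-sum recurrences are precisely the machinery of Propositions~\ref{prop:7}, \ref{prop:9l}, and \ref{prop:10} applied to the weight $\T$. All the formulas you derive match the stated table, including the $r=0$ evaluations.
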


\section*{Acknowledgements}
This work has been realized thanks to the Spanish MINECO MTM2016-77278-P (FEDER) grant, Catalan AGAUR 2017 SGR 1617 grant, and the Brazilian FAPESP 2016/11471-2 and CAPES BEX 13473/13-1 grants.

\bibliographystyle{abbrv}
\bibliography{biblio}

\end{document}